\numberwithin{equation}{section}
\newtheorem{theorem}{Theorem}[section]
\newtheorem{lemma}[theorem]{{\bf Lemma}}
\newtheorem{coro}[theorem]{{\bf Corollary}}
\newtheorem{definition}[theorem]{Definition}
\newtheorem{remark}[subsection]{Remark}
\newcommand{\al}{\alpha}
\begin{document}

	\title[ Distribution and congruences of $(u,v)$-regular bipartitions ]{ Distribution and congruences of $(u,v)$-regular bipartitions} %\\ \today}
	
	\author[Nabin Kumar Meher*]{Nabin Kumar Meher}
	\address{Nabin Kumar Meher, Department of Mathematics, Indian Institute of Information Raichur, Govt. Engineering College Campus, Yermarus, Raichur, Karnataka, India 584135.}
	\email{mehernabin@gmail.com, nabinmeher@iiitr.ac.in}
	
	%	\author[ Ankita Jindal]{ Ankita Jindal}
	%	\address{Ankita Jindal, Indian Statistical Institute Bangalore, 8th Mile, Mysore Road, Bangalore, Karnataka, India 560059}
	%	\email{ankitajindal1203@gmail.com }

		\thanks{2010 Mathematics Subject Classification: Primary 05A17, 11P83, Secondary 11F11 \\
			Keywords: $(u,v)$-regular bipartitions; Eta-quotients; modular forms; Hecke eigenform; Newman Identity; arithmetic density \\
			}
	\maketitle
	\pagenumbering{arabic}
	\pagestyle{headings}
	\begin{abstract}
		Let $B_{u,v}(n)$ denote the number of $(u,v)$-regular bipartitions of $n$. In this article, we prove that $B_{p,m}(n)$ is always almost divisible by $p,$ where $p\geq 5$ is a prime number and $m=p_1^{\al_1} p_2^{\al_2}\cdots p_r^{\al_r}, $ where $\al_i \geq 0$ and $p_i \geq 5$ be distinct primes with $\gcd(p,m)=1$ . Further, we obtain an infinities families of congruences modulo $3$ for $B_{3,7}(n),$ $B_{3,5}(n)$ and $B_{3,2}(n)$ by using Hecke eigenform theory and a result of Newman \cite{Newmann1959}. Furthermore, we get many infinite families of congruences modulo $7$, $11$ and $13$  respectively for $B_{2,7}(n)$, $B_{2,11}(n)$  and $B_{2,13}(n),$ by employing an identity of Newman \cite{Newmann1959}. In addition, we prove infinite families of congruences modulo $2$ for $B_{4,3}(n)$, $B_{8,3}(n)$  and $B_{4,5}(n)$ by applying another result of Newman \cite{Newmann1962}.
		\end{abstract}
	\maketitle
	
	\section{Introduction}
	Let $t$ and $n$ be positive integers. A partition of $n$ is called $t$-regular if none of its parts is divisible by $t.$ Let $b_t(n)$ denotes the number of $t$-regular partitions of $n.$ The generating function for $b_{t}(n)$ is given by 
	$$ \sum_{n=0}^{\infty} b_{t}(n) q^n= \frac{f_{t}}{f_1},$$
	where $f_{t}$ is defined by $f_{t}= \prod_{m=1}^{\infty} (1- q^{t m}).$
	Gordan and Ono\cite{Gordan1997} proved a density result on $t$-regular partitions. In particular, they proved that if $p$ is a prime number and $p^{\hbox{ord}_p(t)} \geq \sqrt{t},$ then for any positive integer $j,$ the arithmetic density of positive integers $n$ such that $b_{t}(n) \equiv 0 \pmod{p^j}$ is one.

	Note that a bipartition $(\alpha, \beta)$ of $n$ is a pair of partitions $(\alpha, \beta)$ such that the sum of all of the parts is $n.$   For any positive integers $u \geq 2, v \geq 2,$ an $(u,v)$-regular bipartition of $n$ is a bipartition $(\alpha, \beta)$ of $n$ such that $\alpha$ is a $u$-regular partition and $\beta $ is a $v$-regular partition. Let $B_{u,v}(n)$ denote the number of $(u,v)$-regular bipartitions of $n$. Conventionally, assume that $B_{u,v}(0)=1.$ The generating function of $B_{u,v}(n)$ is given by
		\begin{align}\label{eq501}
		\sum_{n=0}^{\infty} B_{u,v}(n) q^n= \frac{f_{u} f_{v}}{f_1^2}.
	\end{align}
In order to state the following results, we introduce the Legendre symbol. Let $p \geq 3$ be a prime and $a$ be an integer. The Legendre symbol $\left( \frac{a}{p}  \right)_{L}$ is defined as
$$ \left( \frac{a}{p}  \right)_{L}:= \begin{cases} 
	1,  \quad \hbox{if}  \ a \hbox{ is a quadratic residue modulo} p \ \hbox{and}  \ p \nmid a, \\
	-1,  \quad \hbox{if} \ a \hbox{ is a nonquadratic residue modulo} \ p,  \\
	0,  \quad  \hbox{if}  \ p|a. 
\end{cases}$$
	In 2017, Xia and Yao \cite{Xia2017} studied $(k,\ell)$-regular bipartitions and proved that for a positive integer $s$ and a prime $p\geq 5$ with $\left( \frac{-s}{p}\right)= -1,$ we have
	\begin{equation}\label{eq001}
		B_{3,s} \left( p^{2\alpha +1}n+ \frac{(1+s)(p^{2\alpha+2}-1)}{24}\right) \equiv 0 \pmod3
	\end{equation}
	whenever $n, \alpha \geq 0$ with $p \nmid n$. 
Motivated by the above results, we show that $B_{p,m}(n)$ is almost always divisible by a prime number $p\geq 5.$ In particular, we prove the following.

\begin{theorem}\label{mainthm16}  Let $p \geq 5$ be a prime number. Let $m=p_1^{\al_1} p_2^{\al_2}\cdots p_r^{\al_r}, $ where $\al_i \geq 0$  be non-negative integers and $p_i \geq 5$ be distinct primes with $\gcd(p,m)=1$ . Then the set 
	$$\left\{ n \in \mathbb{N} : B_{p,m}(n) \equiv 0 \pmod{p} \right\}$$ has arithmetic density $1.$
\end{theorem}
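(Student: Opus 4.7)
The plan is to express, after a linear shift of the index, the generating function of $B_{p,m}(n)$ modulo $p$ as the Fourier expansion of a holomorphic modular form with integer coefficients, and then to apply Serre's classical density theorem for modular forms mod a prime.

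First I would reduce the generating function. Using $(1-q^n)^p \equiv 1-q^{np}\pmod p$, which yields $f_1^p\equiv f_p\pmod p$, equation~\eqref{eq501} becomes
\begin{equation*}
\sum_{n \geq 0} B_{p,m}(n) q^n \;=\; \frac{f_p f_m}{f_1^2} \;=\; \frac{f_p f_m\, f_1^{p-2}}{f_1^p} \;\equiv\; f_1^{p-2} f_m \pmod{p},
\end{equation*}
legitimate because $p\ge 5$ forces $p-2\ge 3>0$. I would then introduce the eta quotient
\begin{equation*}
F(z) \;:=\; \eta(24z)^{p-2}\,\eta(24mz) \;=\; q^{p-2+m}\, f_{24}^{p-2}\, f_{24m},
\end{equation*}
so that
\begin{equation*}
F(z) \;\equiv\; \sum_{n \geq 0} B_{p,m}(n)\, q^{\,24n+(p-2+m)} \pmod{p}.
\end{equation*}

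Next I would verify, via the Gordon--Hughes--Ligozat criteria for eta-quotients, that $F$ lies in $M_{(p-1)/2}(\Gamma_0(576m),\chi)$ for an appropriate quadratic character $\chi$. The weight $(p-1)/2$ is a positive integer since $p$ is odd, and for the level $N=576m$ the two congruence conditions
\begin{equation*}
24(p-2) + 24m \;\equiv\; 0 \pmod{24}, \qquad \tfrac{576m}{24}(p-2) + \tfrac{576m}{24m} \;=\; 24\bigl(m(p-2)+1\bigr) \;\equiv\; 0 \pmod{24}
\end{equation*}
are manifestly true. Since both exponents $p-2$ and $1$ are strictly positive, Ligozat's order-at-cusps formula gives, at every cusp $a/c$ of $\Gamma_0(576m)$, a positive-weighted sum of $\gcd(c,24)^2/24$ and $\gcd(c,24m)^2/(24m)$, hence $F$ vanishes at every cusp and is actually a cusp form. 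The hypotheses $\gcd(p,m)=1$ and each $p_i\ge 5$ (so $\gcd(m,24)=1$) are what keep the divisor structure of $576m$ transparent, although they are not strictly essential to the Ligozat checks themselves.

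Finally I would invoke Serre's theorem on Fourier coefficients of modular forms modulo a prime: if $g = \sum a(n)q^n$ is a holomorphic modular form of positive integer weight on $\Gamma_0(N)$ with integer coefficients and Dirichlet character, then $\{n : p\mid a(n)\}$ has arithmetic density $1$. Applied to $F$, this says the coefficient of $q^N$ in $F$ is divisible by $p$ on a set of density $1$ in $N$; since the nonzero coefficients of $F$ are concentrated in the single arithmetic progression $N\equiv p-2+m\pmod{24}$, that density-$1$ statement transfers directly (via $N = 24n + (p-2+m)$) to density $1$ for $\{n : B_{p,m}(n)\equiv 0 \pmod p\}$, which is the claim. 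I expect the step requiring the most attention will be the Ligozat verification, specifically keeping weight, level, character, and cusp-holomorphicity mutually consistent, but that is routine bookkeeping; the density conclusion itself is an immediate consequence of Serre's theorem once $F$ is known to be a modular form.
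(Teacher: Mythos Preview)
Your proposal is correct and follows the same overall architecture as the paper---reduce the generating function modulo $p$ to an eta-product, realize it as (the Fourier expansion of) a holomorphic modular form of integer weight on some $\Gamma_0(N)$, and apply Serre's density theorem (Theorem~\ref{thm2.5})---but you take a genuinely simpler route at the modular-form step.

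The paper does not work with $F(z)=\eta(24z)^{p-2}\eta(24mz)$ directly. Instead it multiplies by the auxiliary factor $H_p^{p^j}(z)=\bigl(\eta(24z)^p/\eta(24pz)\bigr)^{p^j}\equiv 1\pmod{p^{j+1}}$, obtaining
\[
F_{p,m,j}(z)=\frac{\eta(24z)^{p^{j+1}+p-2}\,\eta(24mz)}{\eta(24pz)^{p^j}},
\]
and then proves (Lemma~\ref{lem11}) that $F_{p,m,j}\in M_{\frac{p-1}{2}(p^j+1)}(\Gamma_0(2^5\cdot 3^2\cdot p\cdot m),\chi)$. Because a negative exponent appears, the cusp analysis is no longer automatic and the paper has to carry out the Ligozat inequality case by case. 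Your observation that $\eta(24z)^{p-2}\eta(24mz)$ already has only positive exponents, hence is trivially holomorphic (indeed vanishing) at every cusp, bypasses all of that bookkeeping; your level $576m$ and weight $(p-1)/2$ checks are correct. What the paper's extra factor buys is a setup that would yield density $1$ for $B_{p,m}(n)\equiv 0\pmod{p^{j+1}}$ for every $j$, although the theorem as stated only asserts the case $j=0$, for which your streamlined argument suffices.
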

In 2016, Dou \cite{Dou2016} found that, for $n\geq 0$ and $\alpha \geq 2,$
\begin{equation}\label{eq007}
	B_{3,11} \left( 3^{\alpha}n+ \frac{5 \cdot 3^{\alpha-1}-1}{2}\right) \equiv 0 \pmod{11}.
\end{equation}
 Further, in 2017, L. Wang \cite{Wang2017} studied the arithmetic properties of $B_{3,\ell}(n)$ and $B_{5,\ell}(n).$ In particular, he proved that for any non-negative integer $n,$ we have
\begin{equation}\label{eq005}
	B_{3,7} \left( 2^{2\alpha}n+ \frac{5 \cdot 2^{2\alpha-1}-1}{3}\right) \equiv 0 \pmod3 \quad \hbox{ for all } \alpha \geq 1,
\end{equation}
\begin{equation}\label{eq006}
	B_{3,7}\left(4n+1\right) \equiv - B_{3,7}\left(n\right) \pmod3,
\end{equation}
and
\begin{equation}\label{eq006a}
	B_{3,7}\left( 4n+3 \right) \equiv 0 \pmod3.
\end{equation}
In 2019, T. Kathiravan and K. Srilakshmi \cite{Kathiravan2019} proved infinite families of congruences modulo $5$ for $B_{2,15}(n),$ modulo $11$ for $B_{7,11}(n)$ and modulo $17$ for $B_{243,17}(n).$ Motivated from the above result, by using Hecke eigenform and modular form theory, we obtain the following congruences modulo $3$ for $B_{3,7}(n), B_{3,5}(n) $ and $B_{3,2}(n).$ In particular, it says the following.
 
 \begin{theorem}\label{thm8}
 Let $k$ and $n$ be non-negative integers. Then we have
 	\begin{itemize}
 		\item[(i)] $	B_{3,7} \left( p_1^2 p_2^2 \cdots p_k^2 p_{k+1}^2 n + \frac{p_1^2 p_2^2 \cdots p_k^2 p_{k+1}\left(3j+ p_{k+1}\right)-1}{3} \right) \equiv 0 \pmod3,$  where $p_i$ is a prime number such that $p_i \not \equiv 1 \pmod3$ for $i \in \{ 1, 2, \ldots, k+1\},$ and $ p_{k+1} \nmid j.$
 	\item[(ii)]	$B_{3,5} \left( p_1^2 p_2^2 \cdots p_k^2 p_{k+1}^2 n + \frac{p_1^2 p_2^2 \cdots p_k^2 p_{k+1}\left(4j+ p_{k+1}\right)-1}{4} \right) \equiv 0 \pmod3,$  where $p_i$ is a prime number such that $p_i \equiv 3 \pmod4$ for $i \in \{ 1, 2, \ldots, k+1\},$ and $ p_{k+1} \nmid j.$
 	
 	\item[(iii)]	$B_{3,2} \left( p_1^2 p_2^2 \cdots p_k^2 p_{k+1}^2 n + \frac{p_1^2 p_2^2 \cdots p_k^2 p_{k+1}\left(8j+ p_{k+1}\right)-1}{8} \right) \equiv 0 \pmod 3,$  where $p_i$ is a prime number such that $p_i \not \equiv 1 \pmod8$ for $i \in \{ 1, 2, \ldots, k+1\},$ and $ p_{k+1} \nmid j.$
	\end{itemize}
 \end{theorem}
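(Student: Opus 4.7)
The three parts follow a uniform strategy; I describe it for part (i) in detail and indicate the specializations for parts (ii) and (iii).

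\emph{Step 1: Reduction modulo $3$ and passage to an integer-order eta quotient.} Using $f_1^3\equiv f_3\pmod 3$, equation \eqref{eq501} gives
\[
\sum_{n\geq 0}B_{3,v}(n)q^n\equiv f_1 f_v\pmod 3,\qquad v\in\{7,5,2\}.
\]
The eta product $\eta(z)\eta(vz)=q^{(v+1)/24}f_1 f_v$ has a non-integral leading $q$-exponent when $24\nmid v+1$; setting $N:=24/\gcd(24,v+1)\in\{3,4,8\}$ for $v=7,5,2$, and substituting $q\mapsto q^N$ produces the integer-exponent identity
\[
F_v(z):=\eta(Nz)\eta(Nvz)=q\,f_N\,f_{Nv}\equiv\sum_{n\geq 0}B_{3,v}(n)q^{Nn+1}\pmod 3,
\]
which specializes to $F_7=\eta(3z)\eta(21z)$, $F_5=\eta(4z)\eta(20z)$, $F_2=\eta(8z)\eta(16z)$.

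\emph{Step 2: Hecke eigenform structure and vanishing eigenvalues.} By Ligozat's criteria each $F_v$ is a weight-$1$ holomorphic cusp form on some $\Gamma_0(L_v)$ with quadratic Nebentypus $\chi=\bigl(\tfrac{-7}{\cdot}\bigr),\bigl(\tfrac{-5}{\cdot}\bigr),\bigl(\tfrac{-2}{\cdot}\bigr)$, respectively. I would verify, via a dimension count of the cuspidal newspace (or equivalently by identifying $F_v$ with the theta series of a ray-class character on the associated imaginary quadratic field), that $F_v$ is a \emph{normalized} Hecke eigenform. Because $f_N f_{Nv}$ involves only powers of $q^N$ (Euler's pentagonal identity applied in the variable $q^N$), the $q$-expansion of $F_v$ is supported on exponents $\equiv 1\pmod N$. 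Consequently the Hecke eigenvalue $\lambda_p=a_{F_v}(p)$ vanishes for every prime $p\not\equiv 1\pmod N$ coprime to $L_v$---precisely the prime class $\{p_i\}$ appearing in the hypothesis of each part.

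\emph{Step 3: Newman's recursion and bookkeeping.} Newman's identity \cite{Newmann1959} for a normalized weight-$1$ eigenform reads $a(mn)=a(m)a(n)$ for $\gcd(m,n)=1$ and $a(p^{r+1})=a(p)a(p^r)-\chi(p)a(p^{r-1})$; when $a(p)=0$ these together force $a(p^{2e+1})=0$ for all $e\geq 0$, hence $a(p^{2e+1}M)=0$ for every $M$ coprime to $p$. A direct rearrangement shows
\[
N\!\left(p_1^2\cdots p_{k+1}^2 n+\tfrac{p_1^2\cdots p_k^2 p_{k+1}(Nj+p_{k+1})-1}{N}\right)+1 = p_1^2\cdots p_k^2\,p_{k+1}\cdot m,
\]
where $m:=N(p_{k+1}n+j)+p_{k+1}$ satisfies $\gcd(m,p_{k+1})=1$ (since $m\equiv Nj\pmod{p_{k+1}}$, with $\gcd(N,p_{k+1})=1$ and $p_{k+1}\nmid j$). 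The $p_{k+1}$-adic valuation of the right-hand side is therefore odd, so by Step~2 and the recursion above $a_{F_v}(p_1^2\cdots p_k^2 p_{k+1}m)\equiv 0\pmod 3$. Reading this off the congruence of Step~1 gives exactly the stated congruence for $B_{3,v}$.

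\emph{Main obstacle.} The non-formal step is Step~2: verifying that $F_v$ is genuinely a normalized Hecke eigenform rather than merely a cusp form in a higher-dimensional eigenspace. For these three low-level weight-$1$ eta quotients this is classical (they lie in the Martin/Dummit--Kisilevsky--McKay list of multiplicative eta products), but an explicit argument must still be given; once it is in place, the rest of the proof is a formal application of Newman's identity.
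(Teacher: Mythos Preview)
Your proof is correct and follows essentially the same approach as the paper: reduce modulo $3$ to the weight-$1$ eta product $\eta(Nz)\eta(Nvz)$, invoke its Hecke eigenform property (the paper cites Martin \cite{Martin1996}, exactly the external input you flag as the main obstacle), read off $\lambda_p=0$ for $p\not\equiv 1\pmod N$ from the $q$-support, and conclude via the Hecke recursion. The only difference is cosmetic: the paper iterates an explicit two-case split (a vanishing congruence at $p^2n+pj+\tfrac{p^2-1}{N}$ together with a sign recursion $B_{3,v}(p^2n+\tfrac{p^2-1}{N})\equiv -\chi(p)\,B_{3,v}(n)$) rather than appealing directly to multiplicativity and the odd $p_{k+1}$-adic valuation as you do.
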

	If we substitute $p_1= p_2= \cdots= p_{k+1}= p$ in Theorem \ref{thm8}, then we obtain the  following result.
	
	\begin{coro}\label{coro3}
		Let $k$ and $n$ be non-negative integers. Then we have
		\begin{itemize}
			\item[(i)]	$B_{3,7}\left(p^{2k+2}n +p^{2k+1}j+ \frac{p^{2k+2}-1}{3} \right) \equiv 0 \pmod 3,$ where $p$ is an odd prime such that $p \equiv 2 \pmod3,$ and $p \nmid j.$
			\item[(ii)]$B_{3,5}\left(p^{2k+2}n +p^{2k+1}j+ \frac{p^{2k+2}-1}{4} \right) \equiv 0 \pmod 3,$ where $p$ is an odd prime such that $p \equiv 3 \pmod4,$ and $p \nmid j.$
			\item[(iii)] $B_{3,2}\left(p^{2k+2}n +p^{2k+1}j+ \frac{p^{2k+2}-1}{8} \right) \equiv 0 \pmod 3,$ where $p$ is an odd prime such that $p \not \equiv 1 \pmod8,$ and $p \nmid j.$
		\end{itemize}
	\end{coro}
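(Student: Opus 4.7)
The plan is to derive Corollary \ref{coro3} from Theorem \ref{thm8} by specializing all the primes appearing in the latter to a single prime $p$. Concretely, I would substitute $p_1 = p_2 = \cdots = p_{k+1} = p$ in each of the three parts of Theorem \ref{thm8} and simplify the resulting expression to match the form stated in the corollary.

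For part (i), this substitution turns $p_1^2 p_2^2 \cdots p_{k+1}^2$ into $p^{2k+2}$ and turns $p_1^2 p_2^2 \cdots p_k^2 p_{k+1}$ into $p^{2k+1}$, so the argument of $B_{3,7}$ becomes
\[
p^{2k+2} n + \frac{p^{2k+1}(3j + p) - 1}{3} = p^{2k+2} n + p^{2k+1} j + \frac{p^{2k+2} - 1}{3}.
\]
The hypothesis ``$p_i \not\equiv 1 \pmod 3$ for every $i$'' collapses to the single condition $p \not\equiv 1 \pmod 3$; together with the requirement that $\frac{p^{2k+2}-1}{3}$ be an integer (which excludes $p = 3$), this leaves exactly $p \equiv 2 \pmod 3$ as stated. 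The side condition $p_{k+1} \nmid j$ becomes $p \nmid j$.

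Parts (ii) and (iii) are entirely analogous: the same algebraic identity
\[
\frac{p^{2k+1}(cj + p) - 1}{c} = p^{2k+1} j + \frac{p^{2k+2} - 1}{c}
\]
holds for $c \in \{4, 8\}$, and since $p^2 \equiv 1 \pmod 8$ (hence also $\pmod 4$) for every odd prime $p$, the fractions $\frac{p^{2k+2} - 1}{4}$ and $\frac{p^{2k+2} - 1}{8}$ are automatically integers with no extra exclusion. The congruence conditions $p \equiv 3 \pmod 4$ in (ii) and $p \not\equiv 1 \pmod 8$ in (iii) then match the hypotheses of Theorem \ref{thm8} (ii) and (iii) verbatim once we set all $p_i = p$. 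Since the derivation is purely a mechanical substitution, there is no real obstacle; the only routine checks are the algebraic simplification and the verification that the denominator divides $p^{2k+2} - 1$ in each case, which dictates precisely when $p = 3$ must be excluded.
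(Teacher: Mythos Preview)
Your proposal is correct and follows exactly the paper's approach: the paper derives the corollary in a single sentence by substituting $p_1 = p_2 = \cdots = p_{k+1} = p$ in Theorem \ref{thm8}, and your write-up simply fills in the routine algebra behind that substitution.
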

\begin{remark} 
	\begin{itemize}
		\item[(i)] If we substitute $p=2, k=0$ and $j=1$ in \ref{coro3}$(i)$, we obtain \eqref{eq006a}.
		\item[(ii)] If we put $p=2, k=\alpha-1$ and $j=1$ in \ref{coro3}$(i)$, we obtain \eqref{eq005}.
	\end{itemize}
In Particular, our results generalize some of the result obtained by Wang \cite{Wang2017}.
\end{remark}
Further, by employing a result of Newman \cite{Newmann1959}, we obtain another infinite families of congruences modulo $3$ for $B_{3,7}(n), B_{3,5}(n) $ and $B_{3,2}(n).$
\begin{theorem}\label{thmNewmannapplication1} Let $\alpha$ be a non-negative integer.
	\begin{itemize}
		\item[(i)] Let $p$ be a prime number with $p \equiv 1 \pmod 6.$ If $p \nmid (3n+1)$ and $B_{3,7} \left(\frac{p-1}{3}\right)\equiv 0 \pmod 3,$ then for $n \geq 0, $ 
		\begin{equation}\label{eq49}
			B_{3,7}\left(p^{2 \alpha+1}n+ \frac{p^{2 \alpha +1}-1}{3}\right) \equiv 0 \pmod 3.
		\end{equation}
	\item[(ii)] Let $p$ be a prime number with $p \equiv 1 \pmod 4.$ If $p \nmid (4n+1)$ and $B_{3,5} \left(\frac{p-1}{4}\right)\equiv 0 \pmod 3,$ then for $n \geq 0, $ and 
	\begin{equation}\label{eq149}
		B_{3,5}\left(p^{2 \alpha+1}n+ \frac{p^{2 \alpha +1}-1}{4}\right) \equiv 0 \pmod 3.
	\end{equation}
	\item[(iii)] Let $p$ be a prime number with $p \equiv 1 \pmod 8.$ If $p \nmid (8n+1)$ and $B_{3,2} \left(\frac{p-1}{8}\right)\equiv 0 \pmod 3,$ then for $n \geq 0,$
	\begin{equation}\label{eq249}
		B_{3,2}\left(p^{2 \alpha+1}n+ \frac{p^{2 \alpha +1}-1}{8}\right) \equiv 0 \pmod 3.
	\end{equation}
	\end{itemize}
\end{theorem}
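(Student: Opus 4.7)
The plan is, for each of the three parts, to reduce the generating function modulo $3$ to a weight-one eta product, apply Newman's Hecke recursion to that eta product, and then induct on $\alpha$. I would describe part (i) in detail; parts (ii) and (iii) would follow the same template with $\eta(\tau)\eta(7\tau)$ replaced by $\eta(\tau)\eta(5\tau)$ and $\eta(\tau)\eta(2\tau)$ respectively.

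First, I would use the congruence $f_1^3 \equiv f_3 \pmod 3$ to write $\sum_{n \ge 0} B_{3,7}(n)\,q^n = f_3 f_7/f_1^{2} \equiv f_1 f_7 \pmod 3$, and then multiply by $q^{1/3}$ to obtain
$$\sum_{n \ge 0} B_{3,7}(n)\,q^{(3n+1)/3} \;\equiv\; q^{1/3}f_1 f_7 \;=\; \eta(\tau)\eta(7\tau) \pmod 3.$$
Writing $\eta(\tau)\eta(7\tau) = \sum_{m \ge 1} a(m)\,q^{m/3}$, the coefficient $a(m)$ vanishes unless $m \equiv 1 \pmod 3$, and in that case $a(m) \equiv B_{3,7}\bigl((m-1)/3\bigr) \pmod 3$. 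Next I would invoke Newman's identity \cite{Newmann1959}: since $\eta(\tau)\eta(7\tau)$ is a weight-one Hecke eigenform (the CM newform attached to $\Q(\sqrt{-7})$), for every prime $p \ne 7$ and every $m$ with $p \nmid m$,
$$a(pm) = a(p)\,a(m), \qquad a(p^{k} m) = a(p)\,a(p^{k-1} m) - \chi(p)\,a(p^{k-2} m) \quad (k \ge 2),$$
where $\chi$ is the nebentypus character.

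The hypothesis $p \equiv 1 \pmod 6$ makes $(p-1)/3$ a non-negative integer, so $B_{3,7}\bigl((p-1)/3\bigr) \equiv 0 \pmod 3$ translates into $a(p) \equiv 0 \pmod 3$. Fixing $n$ with $p \nmid (3n+1)$ and setting $m = 3n+1$ and $u_k := a(p^{k}m) \bmod 3$, I find $u_1 \equiv a(p) u_0 \equiv 0$ and the recursion collapses to $u_k \equiv -\chi(p)\,u_{k-2} \pmod 3$; a trivial induction then forces $u_{2\alpha+1} \equiv 0 \pmod 3$ for every $\alpha \ge 0$. Since $p^{2\alpha+1}(3n+1) = 3\bigl(p^{2\alpha+1}n + (p^{2\alpha+1}-1)/3\bigr) + 1$, this is exactly \eqref{eq49}. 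Parts (ii) and (iii) would go through verbatim, using $\eta(\tau)\eta(5\tau) = q^{1/4}f_1 f_5$ and $\eta(\tau)\eta(2\tau) = q^{1/8}f_1 f_2$; the congruence conditions $p \equiv 1 \pmod 4$ and $p \equiv 1 \pmod 8$ play the role that $p \equiv 1 \pmod 6$ plays here, forcing $(p-1)/d$ to be integral and ensuring that multiplication by $p$ preserves the residue class modulo $d$.

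The main obstacle will be justifying the appeal to Newman's theorem for each of the three eta products: one has to identify $\eta(\tau)\eta(k\tau)$ for $k \in \{7,5,2\}$ as a Hecke eigenform of the correct weight and nebentypus so that the multiplicative recursion above is genuinely available. Once that is settled, the precise value $\chi(p) = \pm 1$ plays no further role, because only $a(p) \equiv 0 \pmod 3$ is needed to annihilate all odd-indexed $u_{2\alpha+1}$ modulo $3$.
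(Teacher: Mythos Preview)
Your proposal is correct and follows essentially the same approach as the paper: reduce $\sum B_{3,k}(n)q^n$ modulo $3$ to $f_1 f_k$, invoke Newman's 1959 recursion (equivalently, the Hecke eigenform property of $\eta(\tau)\eta(k\tau)$) to get the two–term recurrence $a(p^j m)\equiv -\chi(p)\,a(p^{j-2}m)\pmod 3$ once $a(p)\equiv 0$, and induct. The only cosmetic difference is that the paper first establishes $c\!\left(p^{2\alpha}n+\tfrac{p^{2\alpha}-1}{d}\right)\equiv (\pm1)^\alpha c(n)$ and then substitutes $n\mapsto pn+\tfrac{p-1}{d}$, whereas you fix $m=dn+1$ with $p\nmid m$ from the outset and run the induction directly on $u_k=a(p^km)$; the content is identical.
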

Furthermore, we prove the following multiplicative formulae for $B_{3,7}(n),$ $B_{3,5}(n)$ and $B_{3,2}(n)$ modulo $3$.
\begin{theorem}\label{thm9}
	 Let $k$ and $n$ be positive integers.
	\begin{itemize}
		\item[(i)]Let $p$ be a prime number such that $ p \equiv 2 \pmod 3.$ Let $r$ be a non-negative integer such that $p$ divides $3r+2,$ then
		\begin{align*}
			B_{3,7}\left(p^{k+1}n+pr+ \frac{2p-1}{3}\right)\equiv   -\left(\frac{-7}{p}\right)_{L}  B_{3,7}\left(p^{k-1}n+ \frac{3r+2-p}{3p} \right) \pmod3.
		\end{align*}
		\item[(ii)] Let $p$ be a prime number such that $ p \equiv 3 \pmod 4.$ Let $r$ be a non-negative integer such that $p$ divides $4r+3,$ then
		\begin{align*}
			B_{3,5}\left(p^{k+1}n+pr+ \frac{3p-1}{4}\right)\equiv -\left(\frac{-5}{p}\right)_{L}  B_{3,5}\left(p^{k-1}n+ \frac{4r+3-p}{4p} \right) \pmod3.
		\end{align*}
		\item[(iii)] Let $t \in \{3,5,7\}.$	Let $p$ be a prime number such that $ p \equiv t \pmod 8.$ Let $r$ be a non-negative integer such that $p$ divides $8r+t,$ then
		\begin{align*}
			B_{3,2}\left(p^{k+1}n+pr+ \frac{tp-1}{8}\right)\equiv  -\left(\frac{-2}{p}\right)_{L}   B_{3,2}\left(p^{k-1}n+ \frac{8r+t-p}{8p} \right) \pmod3.
		\end{align*}
	\end{itemize}
\end{theorem}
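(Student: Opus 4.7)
The plan is to reformulate each of the three congruences as a statement about Fourier coefficients of a weight-one eta product, and then invoke Newman's 1959 identity precisely in the regime where the $p$-th Hecke eigenvalue vanishes modulo $3$. Starting from $\sum B_{3,s}(n) q^n = f_3 f_s / f_1^2$ for $s \in \{7,5,2\}$, the elementary congruence $f_1^3 \equiv f_3 \pmod 3$ gives $\sum B_{3,s}(n) q^n \equiv f_1 f_s \pmod 3$. Setting $d := 24/(1+s) \in \{3,4,8\}$, substituting $q \mapsto q^d$ and multiplying by $q$ recognises the right-hand side as an eta product:
\begin{align*}
F_s(\tau) := \eta(d\tau)\eta(ds\tau) = q\, f_d f_{ds} \equiv \sum_{n \geq 0} B_{3,s}(n)\, q^{dn+1} \pmod 3.
\end{align*}
Writing $F_s = \sum_{m \geq 1} a_s(m) q^m$, we therefore have $a_s(dn+1) \equiv B_{3,s}(n) \pmod 3$, whereas $a_s(m) = 0$ identically whenever $m \not\equiv 1 \pmod d$, since every factor of $F_s$ past the leading $q$ is a power series in $q^d$.

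Next, by Newman's 1959 identity, each $F_s$ is a Hecke eigenform of weight $1$ with quadratic character $\chi_s(\cdot) = \left(\frac{-s}{\cdot}\right)_L$, so for every prime $p$ coprime to the level, every $m$ coprime to $p$, and every $k \geq 1$,
\begin{align*}
a_s(p^{k+1} m) + \chi_s(p)\, a_s(p^{k-1} m) = a_s(p)\, a_s(p^k m).
\end{align*}
The three hypotheses on $p$---namely $p \equiv 2 \pmod 3$ in (i), $p \equiv 3 \pmod 4$ in (ii), and $p \equiv t \in \{3,5,7\} \pmod 8$ in (iii)---each translate to $p \not\equiv 1 \pmod d$, so by the support observation above $a_s(p) = 0$. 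The recursion therefore collapses modulo $3$ to
\begin{align*}
a_s(p^{k+1} m) \equiv -\chi_s(p)\, a_s(p^{k-1} m) \pmod 3.
\end{align*}

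Finally, I would match indices. In (i), setting $M_L := p^{k+1} n + pr + (2p-1)/3$ and $M_R := p^{k-1} n + (3r+2-p)/(3p)$, the hypothesis $p \mid 3r+2$ together with $p \equiv 2 \pmod 3$ gives $(3r+2)/p \equiv 1 \pmod 3$ and hence the key identity $3 M_L + 1 = p^2 (3 M_R + 1)$. Taking $k=1$ and $m = 3 M_R + 1$ in the collapsed recursion (and applying it once more should $p \mid m$) produces the desired $B_{3,7}$ congruence. Parts (ii) and (iii) run in exact parallel, with $3$ replaced by $d \in \{4,8\}$ and with the analogous identities $d M_L + 1 = p^2 (d M_R + 1)$ verified from the congruence conditions on $p$ modulo $4$ and $8$. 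The main substantive input is Newman's identification of each $F_s$ as a Hecke eigenform with the stated character; once that is granted, the vanishing $a_s(p) = 0$ is automatic from the support of $F_s$, and the remainder of the proof is the index bookkeeping just indicated.
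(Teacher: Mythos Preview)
Your proposal is correct and follows essentially the same route as the paper. The paper likewise reduces $\sum B_{3,s}(n)q^n\equiv f_1f_s\pmod 3$, identifies (after $q\mapsto q^d$) the eta product $\eta(dz)\eta(dsz)$ as a weight-one Hecke eigenform (citing Martin rather than Newman), observes that $a_s(p)=0$ from the support condition $p\not\equiv 1\pmod d$, and then performs the same index bookkeeping via the relation $a_s(pn)=-\chi_s(p)\,a_s(n/p)$; your identity $dM_L+1=p^2(dM_R+1)$ is exactly the content of the paper's substitutions, and your parenthetical ``applying it once more should $p\mid m$'' is handled in the paper simply by the convention $a_s(n/p)=0$ when $p\nmid n$, which makes the single relation valid for all $n$.
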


	\begin{coro}\label{coro4}
	Let $k$ and $n$ be positive integers. 
		\begin{itemize}
			\item[(i)] Let $p$ be a prime number such that $p \equiv 2 \pmod 3.$ Then
			\begin{align*}
				B_{3,7}\left(p^{2k}n+ \frac{p^{2k}-1}{3} \right)&\equiv  \left(-1\right)^k \left(\frac{-7}{p}\right)_{L}^k   B_{3,7}(n) \pmod3.
			\end{align*}
		\item[(ii)] 	Let $p$ be a prime number such that $p \equiv 3 \pmod 4.$ Then
		\begin{align*}
			B_{3,5}\left(p^{2k}n+ \frac{p^{2k}-1}{4} \right)&\equiv \left(-1\right)^k \left(\frac{-5}{p}\right)_{L}^k   B_{3,5}(n) \pmod3.
		\end{align*}
		\item[(iii)]  Let $t \in \{3,5,7\}.$ Let $p$ be a prime number such that $p \equiv t \pmod 8.$ Then
		\begin{align*}
			B_{3,2}\left(p^{2k}n+ \frac{p^{2k}-1}{4} \right)&\equiv  \left(-1\right)^k \left(\frac{-2}{p}\right)_{L}^k  B_{3,2}(n) \pmod3.
		\end{align*}
		\end{itemize}
\end{coro}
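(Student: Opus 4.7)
The plan is to prove each of the three parts by induction on $k$, using the corresponding identity of Theorem \ref{thm9} as the one-step reduction. The base case $k=0$ is the trivial identity $B_{3,\ell}(n)\equiv B_{3,\ell}(n)\pmod 3$, and the inductive step proceeds by choosing the free parameter $r$ in Theorem \ref{thm9} so that the input offset becomes $(p^{2k}-1)/d$ for the appropriate $d\in\{3,4,8\}$ and the reduced offset simplifies back to $(p^{2k-2}-1)/d$, thereby closing the recursion.

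Concretely, for part (i), given the statement for $k$ we want to obtain it for $k+1$. I would apply Theorem \ref{thm9}(i) with its ``$k$'' taken to be $2k+1$ and its ``$r$'' chosen as $r=(p^{2k+1}-2)/3$. The hypothesis $p\equiv 2\pmod 3$ guarantees $p^{2k+1}\equiv 2\pmod 3$, so $r$ is a non-negative integer, and moreover $3r+2=p^{2k+1}$ is divisible by $p$, satisfying the hypothesis of the theorem. A direct computation then shows
\begin{equation*}
pr+\frac{2p-1}{3}=\frac{p^{2k+2}-1}{3},\qquad \frac{3r+2-p}{3p}=\frac{p^{2k}-1}{3},
\end{equation*}
so Theorem \ref{thm9}(i) collapses to
\begin{equation*}
B_{3,7}\!\left(p^{2k+2}n+\tfrac{p^{2k+2}-1}{3}\right)\equiv -\!\left(\tfrac{-7}{p}\right)_{L} B_{3,7}\!\left(p^{2k}n+\tfrac{p^{2k}-1}{3}\right)\pmod 3,
\end{equation*}
and the inductive hypothesis finishes the step.

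Parts (ii) and (iii) follow by the identical strategy with the obvious changes of denominator. For (ii), one takes $r=(p^{2k+1}-3)/4$, which is an integer because $p\equiv 3\pmod 4$ gives $p^{2k+1}\equiv 3\pmod 4$, and one checks $4r+3=p^{2k+1}$. For (iii), one takes $r=(p^{2k+1}-t)/8$; since $p\equiv t\pmod 8$ and $t^2\equiv 1\pmod 8$ for every $t\in\{3,5,7\}$, we get $p^{2k+1}\equiv t\pmod 8$, so $r$ is an integer, and $8r+t=p^{2k+1}$ is divisible by $p$. In each case Theorem \ref{thm9} reduces the exponent from $2k+2$ to $2k$, producing an extra factor of $-\left(\tfrac{-\ell}{p}\right)_{L}$ at every step, which after $k$ applications gives the claimed sign $(-1)^k\left(\tfrac{-\ell}{p}\right)_{L}^{k}$.

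The only real subtlety is bookkeeping: verifying that $r$ is a non-negative integer and that the two fractions $pr+\tfrac{tp-1}{d}$ and $\tfrac{dr+t-p}{dp}$ telescope correctly into consecutive powers of $p$. These are elementary arithmetic checks that hinge precisely on the congruence conditions on $p$ assumed in the statement; no further modular-form input is required, since the entire algebraic machinery has already been packaged into Theorem \ref{thm9}.
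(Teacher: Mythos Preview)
Your proof is correct and follows essentially the same approach as the paper: the paper likewise applies Theorem~\ref{thm9} with the specific choice $3r+2=p^{2k-1}$ (respectively $4r+3=p^{2k-1}$, $8r+t=p^{2k-1}$) and the theorem's exponent set to $2k-1$, obtaining the one-step recursion $B_{3,\ell}\bigl(p^{2k}n+\tfrac{p^{2k}-1}{d}\bigr)\equiv -\bigl(\tfrac{-\ell}{p}\bigr)_L\,B_{3,\ell}\bigl(p^{2k-2}n+\tfrac{p^{2k-2}-1}{d}\bigr)\pmod 3$ and then iterating down to $k=0$. Your presentation as a formal induction (with the explicit verification that $r$ is a non-negative integer satisfying the divisibility hypothesis) is in fact slightly more careful than the paper's, which simply writes ``$\equiv\cdots\equiv$'' for the iteration.
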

\begin{remark}
	In particular, if we put $p=2, k=1$ in the above corollary \ref{coro4}$(i)$, we get \eqref{eq006}.
\end{remark}
Next, by applying an identity due to Newman \cite{Newmann1959}, we obtain the following congruences modulo $7,11$ and $13$ respectively.

\begin{theorem}\label{newmannthm4}
	\begin{itemize}
		\item[(i)] Let $p$ be a prime number with $p \equiv 1 \pmod {24}.$ If $p \nmid (24n+7)$ and $B_{7,2} \left(\frac{7(p-1)}{24}\right)\equiv 0 \pmod 7,$ then for $n \geq 0, $ and $\alpha \geq 0,$
		\begin{equation}\label{eq349}
			B_{7,2}\left(p^{2 \alpha +1}n+ \frac{7(p^{2 \alpha+1}-1)}{24}\right) \equiv 0 \pmod 7.
		\end{equation}
	\item[(ii)] 	Let $p$ be a prime number with $p \equiv 1 \pmod {24}.$ If $p \nmid (24n+1)$ and $B_{11,2} \left(\frac{11(p-1)}{24}\right)\equiv 0 \pmod{11},$ then for $n \geq 0, $ and $\alpha \geq 0,$
	\begin{equation}\label{eq449}
		B_{11,2}\left(p^{2 \alpha +1}n+ \frac{11(p^{2 \alpha+1}-1)}{24}\right) \equiv 0 \pmod{11}.
	\end{equation}
	\item[(iii)] Let $p$ be a prime number with $p \equiv 1 \pmod {24}.$ If $p \nmid (24n+13)$ and $B_{13,2} \left(\frac{13(p-1)}{24}\right)\equiv 0 \pmod{13},$ then for $n \geq 0, $ and $\alpha \geq 0,$
	\begin{equation}\label{eq549}
		B_{13,2}\left(p^{2 \alpha +1}n+ \frac{13(p^{2 \alpha+1}-1)}{24}\right) \equiv 0 \pmod{13}.
	\end{equation}
	\end{itemize}
\end{theorem}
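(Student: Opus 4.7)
The three parts are structurally identical; I write $\ell \in \{7, 11, 13\}$ for the modulus, so that parts (i), (ii), (iii) correspond to $\ell = 7, 11, 13$ respectively. The plan is to interpret $B_{\ell,2}(n) \pmod \ell$ as the Fourier coefficients of an explicit eta-product $F_\ell$, invoke Newman's 1959 identity to realise $F_\ell$ as a Hecke eigenform modulo $\ell$, and then propagate the vanishing of a single Hecke eigenvalue through the usual two-term recursion on coefficients.

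\emph{Step 1 (reduction).} Fermat's little theorem gives $f_1^\ell \equiv f_\ell \pmod \ell$, so from the generating function \eqref{eq501},
\[
\sum_{n \geq 0} B_{\ell,2}(n)\, q^n \;=\; \frac{f_\ell f_2}{f_1^2} \;\equiv\; f_1^{\ell-2}\, f_2 \pmod \ell.
\]
Substituting $q \mapsto q^{24}$ and multiplying by $q^\ell$ yields $\sum_{n \geq 0} B_{\ell,2}(n)\, q^{24n+\ell} \equiv F_\ell(z) \pmod \ell$, where
\[
F_\ell(z) \;:=\; \eta(24z)^{\ell-2}\, \eta(48z).
\]
Writing $F_\ell(z) = \sum_n a_\ell(n)\, q^n$, one has $a_\ell(m) = 0$ unless $m \equiv \ell \pmod{24}$, and $a_\ell(24n+\ell) \equiv B_{\ell,2}(n) \pmod \ell$ for every $n \geq 0$.

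\emph{Step 2 (Hecke eigenvalue).} A Ligozat-style check shows that $F_\ell$ is a cusp form of weight $k := (\ell-1)/2$ on some $\Gamma_0(N)$ with $48 \mid N$ and with a quadratic nebentypus $\chi$. Newman's 1959 identity \cite{Newmann1959} --- whose congruence hypothesis on the test prime is precisely $p \equiv 1 \pmod{24}$ --- applies to $F_\ell$ and yields $F_\ell \mid T_p = \lambda_\ell(p)\, F_\ell$ for some integer $\lambda_\ell(p)$. Comparing the coefficients of $q^\ell$ on both sides (using $a_\ell(\ell) = 1$ and $a_\ell(\ell/p) = 0$, the latter because $p > \ell$) gives
\[
\lambda_\ell(p) \;=\; a_\ell(\ell p) \;\equiv\; B_{\ell,2}\!\left(\frac{\ell(p-1)}{24}\right) \pmod \ell,
\]
so the hypothesis of the theorem says exactly $\lambda_\ell(p) \equiv 0 \pmod \ell$.

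\emph{Step 3 (induction, conclusion, and main obstacle).} With $\lambda_\ell(p) \equiv 0 \pmod \ell$, the eigenvalue equation $F_\ell \mid T_p \equiv 0 \pmod \ell$ reads coefficient by coefficient as $a_\ell(pm) + \chi(p)\, p^{k-1}\, a_\ell(m/p) \equiv 0 \pmod \ell$ for every $m \geq 1$. Fix $m$ coprime to $p$ and set $u_\alpha := a_\ell(p^\alpha m)$; since $a_\ell(m/p) = 0$, the recursion $u_{\alpha+1} \equiv -\chi(p)\, p^{k-1}\, u_{\alpha-1} \pmod \ell$ (with $u_{-1} := 0$) forces $u_{2\alpha+1} \equiv 0 \pmod \ell$ for every $\alpha \geq 0$. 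Apply this with $m := 24n+\ell$, which is coprime to $p$ by the coprimality hypothesis; since $p \equiv 1 \pmod{24}$ makes $\ell(p^{2\alpha+1}-1)/24$ an integer, the identity
\[
p^{2\alpha+1}(24n+\ell) \;=\; 24\!\left(p^{2\alpha+1} n + \frac{\ell(p^{2\alpha+1}-1)}{24}\right) + \ell
\]
converts $u_{2\alpha+1} \equiv 0 \pmod \ell$ into $B_{\ell,2}\!\left(p^{2\alpha+1} n + \ell(p^{2\alpha+1}-1)/24\right) \equiv 0 \pmod \ell$, which gives \eqref{eq349}, \eqref{eq449}, \eqref{eq549} for $\ell = 7, 11, 13$ respectively. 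The substantive step is the application of Newman's identity in Step 2: one has to verify the level, character, and cusp-vanishing hypotheses for each of the three eta-products $F_\ell$, and check that $p \equiv 1 \pmod{24}$ is precisely the congruence on $p$ that forces $T_p$ to act as a scalar on $F_\ell$ modulo $\ell$. Everything downstream of that eigenvalue computation is the mechanical two-term recursion above.
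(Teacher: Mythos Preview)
Your proposal is correct and follows essentially the same route as the paper: reduce $B_{\ell,2}(n)\pmod\ell$ to the coefficients of $f_1^{\ell-2}f_2$, invoke Newman's 1959 recurrence (which you package as a Hecke eigenvalue equation), read off $\lambda_\ell(p)\equiv B_{\ell,2}(\ell(p-1)/24)\pmod\ell$, and then run the two-term recursion to kill all odd powers of $p$. The only cosmetic differences are that the paper works directly with the coefficients $c(n)$ of $f_1^{\ell-2}f_2$ (no $q\mapsto q^{24}$ rescaling) and writes out the induction for each $\ell\in\{7,11,13\}$ separately, whereas you treat the three cases uniformly and phrase Newman's identity in Hecke-operator language.
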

In addition, by employing another result of Newman \cite{Newmann1962}, we obtain the following congruences of modulo $2$ for $B_{4,3}(n).$
\begin{theorem}\label{thmnewman10}
	Let $a_1(n)$ be defined by
	\begin{equation}\label{eq1001}
		\sum_{n=0}^{\infty} a_1(n)q^n:= \left(q;q\right)^2_{\infty} \left(q^3;q^3\right)_{\infty} 
	\end{equation}
	and let $p \geq 5$ be a prime number.
	Define 
	\begin{equation}\label{eq1002}
		w_1(p):= a_1\left( \frac{5(p^2-1)}{24}\right) + \left( \frac{-6}{p}\right)_{L} \left( \frac{-5(p^2-1)}{24 p}\right)_{L}
	\end{equation}
	\begin{itemize}
		\item[(i)] If $ w_1(p) \equiv 0 \pmod2,$ then for $n,k \geq 0,$ if $p \nmid n,$ we have
		\begin{equation}\label{eq1003}
			B_{4,3} \left( p^{4k+3}n+  \frac{5 \left( p^{4k+4}-1\right)}{24} \right) \equiv 0 \pmod2.
		\end{equation}
		\item[(ii)] If $ w_1(p) \not \equiv 0 \pmod2,$ then for $n,k \geq 0,$ if $p \nmid n,$ we have
		\begin{equation}\label{eq1004}
			B_{4,3} \left( p^{6k+5}n+  \frac{5 \left( p^{6k+6}-1\right)}{24} \right) \equiv 0 \pmod2.
		\end{equation}
		
		\item[(iii)] If $ w_1(p) \not \equiv 0 \pmod2,$ then for $n,k \geq 0,$ with $ w_1(p)\equiv  \left( \frac{ -6n-1+ \frac{(p^2-1)}{4}}{p}\right)_{L}\pmod2,$ we have
		\begin{equation}\label{eq1005}
			B_{4,3} \left( p^{6k+2}n+  \frac{5 \left( p^{6k+2}-1\right)}{24} \right) \equiv 0 \pmod2.
		\end{equation}
	\end{itemize}
\end{theorem}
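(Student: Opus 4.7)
The plan is to reduce the problem to the auxiliary sequence $a_1$, identify the associated half-integral weight modular form, and then iterate a single Hecke-type recurrence modulo $2$.

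First, I would establish that $B_{4,3}(n) \equiv a_1(n) \pmod 2$ for every $n \ge 0$. Since $(1-q^m)^2 \equiv 1 - q^{2m} \pmod 2$, one has $f_4 \equiv f_2^2 \equiv f_1^4 \pmod 2$, so
\[
\sum_{n \ge 0} B_{4,3}(n)\, q^n \;=\; \frac{f_4 f_3}{f_1^2} \;\equiv\; f_1^2 f_3 \;=\; \sum_{n \ge 0} a_1(n)\, q^n \pmod 2,
\]
and it suffices to prove each of the three statements with $a_1$ in place of $B_{4,3}$.

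Next, I would pass to the companion cusp form $F(z) := \eta(24z)^2 \eta(72z) = \sum_{n \ge 0} a_1(n)\, q^{24n+5}$, which by Ligozat's criterion is a weight-$3/2$ form on some $\Gamma_0(N)$ with quadratic Nebentypus $\chi$; the precise choice of level and character will be reflected in the appearance of $\left(\frac{-6}{\cdot}\right)_L$ in the eigenvalue formula. Newman's 1962 theorem for eta-quotient Hecke eigenforms implies that $F$ is a Hecke eigenform for $T_{p^2}$ for every prime $p \ge 5$ coprime to $N$; computing both sides of $T_{p^2} F = \lambda_p F$ at the $q^5$-coefficient identifies $\lambda_p$ with exactly $w_1(p)$ as defined in \eqref{eq1002}. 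Writing $c(24n+5) := a_1(n)$, the eigenform relation becomes
\[
c(p^2 m) + \chi(p)\left(\frac{-m}{p}\right)_L c(m) + p\,\chi(p^2)\, c(m/p^2) \;=\; w_1(p)\, c(m), \qquad m \equiv 5 \pmod{24}.
\]

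To finish, I would iterate this relation along the $p$-power direction, tracking $v := v_p(m)$. Write $m = m_0 p^v$ with $p \nmid m_0$. For $v \ge 1$ the Legendre factor is zero, and, working modulo $2$ (so that $p \equiv 1$ and $\chi(p), \chi(p^2) \equiv 1$), the recurrence degenerates to $U_{j+1} \equiv w_1(p)\, U_j + U_{j-1} \pmod 2$. For odd $v = 2j+1$ the initial data is $(U_0,U_1) = (1, w_1(p))$: when $w_1(p) \equiv 0$ the sequence is $1,0,1,0,\ldots$, so the coefficient vanishes mod $2$ precisely at $v = 4k+3$, giving (i); when $w_1(p) \equiv 1$ one gets a period-$3$ Fibonacci pattern with zeros at $v = 6k+5$, giving (ii). For even $v = 2j$ the initial data is $(V_0, V_1) = \bigl(1,\ w_1(p) - \chi(p)\bigl(\tfrac{-m_0}{p}\bigr)_L\bigr)$; the hypothesis $V_1 \equiv 0 \pmod 2$ is exactly the Legendre condition appearing in (iii), and the same Fibonacci-like iteration then yields zeros at $v = 6k+2$. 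The shift parameters match by direct calculation: in (i) and (ii) one has $24N+5 = p^v(24n+5p)$, and $p \nmid(24n+5p)$ since $p \nmid n$; in (iii) one has $24N+5 = p^{6k+2}(24n+5)$, and the condition $w_1(p) \equiv \bigl(\tfrac{-(24n+5)}{p}\bigr)_L \pmod 2$ is equivalent to the form in the theorem via $(p^2-1)/4 \equiv -1/4 \pmod p$ and $\bigl(\tfrac{4}{p}\bigr)_L = 1$. The main obstacle I anticipate is the careful verification that Newman's 1962 identity applies in precisely this level/character setting, so that the eigenvalue read off at the $q^5$-coefficient is the combinatorially defined $w_1(p)$ (in particular that the resulting character is $\bigl(\tfrac{-6}{\cdot}\bigr)_L$); once this is pinned down, the three cases amount to routine bookkeeping of a two-term linear recurrence modulo $2$.
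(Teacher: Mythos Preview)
Your proposal is correct and follows essentially the same route as the paper: reduce $B_{4,3}$ to $a_1$ modulo $2$, invoke Newman's 1962 identity (the paper states it as Lemma~\ref{lem100} and plugs in $q=3$, $r=2$, $s=1$ to obtain exactly your Hecke-type recurrence with eigenvalue $w_1(p)$), and then iterate the resulting three-term relation along powers of $p$. The only cosmetic difference is that you organize the iteration as the single linear recurrence $U_{j+1}\equiv w_1(p)U_j+U_{j-1}\pmod 2$ and read off its period, whereas the paper unrolls the same computation step by step (equations \eqref{eq1012}--\eqref{eq1029}); your ``main obstacle'' of matching the character to $\left(\tfrac{-6}{\cdot}\right)_L$ is precisely what the direct substitution into Newman's lemma handles.
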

\begin{remark}
	\begin{itemize}
		\item[(i)] Note that \eqref{eq1001} was considered in [Thereom $1.5$, \cite{Xia2021}] and [Theorem $1.1$, \cite{Kathiravan2023}]. They derived 
		 infinite families of congruences modulo $3$ and $2$ respectively.
		 \item[(ii)] If we put $p=7,$ \eqref{eq1002} we obtain that $a_1(10)=0$ and $w_1(7)=1.$ Note that $ w_1(7) \equiv \left( \frac{  11-6n}{7}\right)_{L} \equiv \left( \frac{n-3}{7}\right)_{L} \equiv 1 \pmod2$ when $n \not \equiv 3 \pmod7.$ Thus putting $k=0,$ in \eqref{eq005}, we get 
		 	\begin{equation*}
		 	B_{4,3} \left( 343n+ 49 r+10 \right) \equiv 0 \pmod2,
		 \end{equation*} where $r \in \{ 0, 1, 2, 4, 5, 6\}$
	 	\end{itemize}
\end{remark}
Similarly, by employing the same result of Newman \cite{Newmann1962}, we get another  infinite families of congruences modulo $2$ for $B_{8,3}(n),$
 and $B_{4,5}(n).$	
 \begin{theorem}\label{thmnewman11}
	Let $a_2(n)$ be defined by
	\begin{equation}\label{eq1101}
		\sum_{n=0}^{\infty} a_2(n)q^n:= \left(q;q\right)^6_{\infty} \left(q^3;q^3\right)_{\infty} 
	\end{equation}
	and let $p \geq 5$ be a prime number.
	Define 
	\begin{equation}\label{eq1102}
		w_2(p):= a_2\left( \frac{3(p^2-1)}{8}\right) + p^2 \cdot \left( \frac{-6}{p}\right)_{L} \left( \frac{-3(p^2-1)}{8 p}\right)_{L}
	\end{equation}
	\begin{itemize}
		\item[(i)] If $ w_2(p) \equiv 0 \pmod2,$ then for $n,k \geq 0,$ if $p \nmid n,$ we have
		\begin{equation}\label{eq1103}
			B_{8,3} \left( p^{4k+3}n+  \frac{3 \left( p^{4k+4}-1\right)}{8} \right) \equiv 0 \pmod2.
		\end{equation}
		\item[(ii)] If $ w_2(p) \not \equiv 0 \pmod2,$ then for $n,k \geq 0,$ if $p \nmid n,$ we have
		\begin{equation}\label{eq1104}
			B_{8,3} \left( p^{6k+5}n+  \frac{3 \left( p^{6k+6}-1\right)}{8} \right) \equiv 0 \pmod2.
		\end{equation}
		
		\item[(iii)] If $ w_2(p) \not \equiv 0 \pmod2,$ then for $n,k \geq 0,$ with $ w_2(p)\equiv  p^2 \cdot \left( \frac{ -6n-2+ \frac{(p^2-1)}{4}}{p}\right)_{L}\pmod2,$ we have
		\begin{equation}\label{eq1105}
			B_{8,3} \left( p^{6k+2}n+  \frac{3 \left( p^{6k+2}-1\right)}{8} \right) \equiv 0 \pmod2.
		\end{equation}
	\end{itemize}
\end{theorem}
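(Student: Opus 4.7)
The strategy mirrors that of Theorem \ref{thmnewman10}, with the eta-quotient $\eta(z)^2\eta(3z)$ there replaced by $\eta(z)^6\eta(3z)$ here. The first step is a reduction modulo $2$: iterating $f_{2m}\equiv f_m^2\pmod 2$ gives $f_8\equiv f_4^2\equiv f_2^4\equiv f_1^8\pmod 2$, and so by \eqref{eq501},
\[
\sum_{n\ge 0} B_{8,3}(n)\,q^n \;=\; \frac{f_8 f_3}{f_1^2} \;\equiv\; f_1^6 f_3 \;=\; \sum_{n\ge 0} a_2(n)\,q^n \pmod 2.
\]
Hence $B_{8,3}(n)\equiv a_2(n)\pmod 2$ for all $n\ge 0$, and it suffices to prove the three assertions for $a_2$. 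I then recognize $a_2$ via the eta-quotient
\[
\eta(z)^6\eta(3z) \;=\; q^{3/8}\sum_{n\ge 0} a_2(n)\,q^n,
\]
a weight-$7/2$ form on an appropriate congruence subgroup with quadratic character of Kronecker type, and apply Newman's 1962 identity to obtain a relation of the form
\[
a_2\!\left(p^2 N + \tfrac{3(p^2-1)}{8}\right) \;=\; w_2(p)\,a_2(N) \;-\; p^2\!\left(\tfrac{-6}{p}\right)_L a_2\!\left(\tfrac{N-3(p^2-1)/8}{p^2}\right),
\]
valid for every prime $p\ge 5$, with $w_2(p)$ exactly as in \eqref{eq1102} and the last term interpreted as zero unless its argument is a non-negative integer.

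To derive the three congruences, set $T_k:=a_2(p^{2k+1}n+3(p^{2k+2}-1)/8)$. A direct substitution $N=p^{2k-1}n+3(p^{2k}-1)/8$ in the displayed identity collapses into the second-order linear recurrence
\[
T_k \;=\; w_2(p)\,T_{k-1} \;-\; p^2\!\left(\tfrac{-6}{p}\right)_L T_{k-2} \qquad (k\ge 1),
\]
with base values $T_{-1}=a_2(n/p)=0$ (since $p\nmid n$) and $T_0=a_2(pn+3(p^2-1)/8)$. Reducing modulo $2$, both $p^2$ and $\bigl(\tfrac{-6}{p}\bigr)_L$ are $\equiv 1$, so the recurrence simplifies to $T_k\equiv w_2(p)\,T_{k-1}+T_{k-2}\pmod 2$. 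If $w_2(p)\equiv 0\pmod 2$, then $T_k\equiv T_{k-2}\pmod 2$, whence $T_{2k+1}\equiv 0$, which after reindexing is part $(i)$. If $w_2(p)\equiv 1\pmod 2$, the recurrence is Fibonacci modulo $2$, of period $3$; since $T_{-1}=0$, we obtain $T_k\equiv 0\pmod 2$ exactly when $k\equiv 2\pmod 3$, and setting $k=3k'+2$ yields part $(ii)$. Part $(iii)$ proceeds by iterating the companion sequence $S_k:=a_2(p^{2k}n+3(p^{2k}-1)/8)$, whose initial segment contains a ``correction'' term $a_2((n-3(p^2-1)/8)/p^2)$ that, for a half-integer-weight form, is expressible modulo $2$ via the Legendre symbol $\bigl(\tfrac{-6n-2+(p^2-1)/4}{p}\bigr)_L$; the hypothesis on $w_2(p)$ in $(iii)$ is precisely what forces this correction to match $w_2(p)\,a_2(n)\pmod 2$, making the entry $S_{3k+1}$ of the Fibonacci pattern vanish.

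The principal obstacle is establishing the Newman identity in exactly the form stated above, that is, confirming that the character attached to $\eta(z)^6\eta(3z)$ is $\bigl(\tfrac{-6}{\cdot}\bigr)_L$ and that the ``trace'' quantity on the right is the $w_2(p)$ of \eqref{eq1102} (with the factor $p^2$ arising from the weight-$7/2$ normalization, as opposed to the absence of such a factor in $w_1(p)$ for the weight-$3/2$ form $\eta(z)^2\eta(3z)$). Once that identity is in hand, the three cases are a routine induction parallel to Theorem \ref{thmnewman10}.
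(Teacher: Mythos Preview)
Your overall strategy---reduce $B_{8,3}$ to $a_2$ modulo $2$, invoke Newman's identity from Lemma~\ref{lem100} with $q=3$, $r=6$, $s=1$ (so $\epsilon=7/2$), and iterate---matches the paper's, and your packaging of the iteration as the second-order linear recurrence $T_k\equiv w_2(p)T_{k-1}+T_{k-2}\pmod 2$ is a clean way to handle parts (i) and (ii). However, your displayed form of Newman's identity is incorrect in two places. With $\epsilon=7/2$ the lemma actually gives
\[
a_2\!\left(p^2N+\tfrac{3(p^2-1)}{8}\right)
=\Bigl[w_2(p)-p^2\bigl(\tfrac{-6}{p}\bigr)_L\bigl(\tfrac{N-3(p^2-1)/8}{p}\bigr)_L\Bigr]a_2(N)
-p^{5}\,a_2\!\left(\tfrac{N-3(p^2-1)/8}{p^2}\right),
\]
so the coefficient of $a_2(N)$ carries an $N$-dependent Legendre-symbol term, and the coefficient of the last term is $p^{2\epsilon-2}=p^5$, not $p^2\bigl(\tfrac{-6}{p}\bigr)_L$. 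For your substitutions $N=p^{2k-1}n+3(p^{2k}-1)/8$ the extra Legendre term vanishes (its argument is divisible by $p$) and $p^5\equiv 1\pmod 2$, so your recurrence for $T_k$ survives and parts (i) and (ii) are fine.

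Part (iii) is where the mis-stated identity bites. Your account places the Legendre-symbol information in the ``correction'' term $a_2\bigl((n-3(p^2-1)/8)/p^2\bigr)$; this is backwards. In the correct identity applied at $N=n$, that third term is simply $0$: the hypothesis forces the Legendre symbol to equal $\pm 1$, hence $p\nmid(8n+3)$, so the argument of that $a_2$ is not an integer. The Legendre symbol lives instead in the \emph{coefficient} of $a_2(n)$, and the hypothesis $w_2(p)\equiv p^2\bigl(\tfrac{-6n-2+(p^2-1)/4}{p}\bigr)_L\pmod 2$ is exactly what kills that coefficient modulo $2$. Thus $S_1=a_2\bigl(p^2n+3(p^2-1)/8\bigr)\equiv 0\pmod 2$ directly; then the Fibonacci-mod-$2$ recurrence (which holds for $S_k$ once $k\ge 2$) gives $S_{3k+1}\equiv 0$, i.e.\ \eqref{eq1105}. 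This is precisely the paper's argument; your paragraph on (iii) needs to be rewritten with the Legendre symbol moved to the correct slot.
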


	\begin{theorem}\label{thmnewman12}
	Let $a_3(n)$ be defined by
	\begin{equation}\label{eq1201}
		\sum_{n=0}^{\infty} a_3(n)q^n:= \left(q;q\right)^2_{\infty} \left(q^5;q^5\right)_{\infty} 
	\end{equation}
	and let $p \geq 5$ be a prime number.
	Define 
	\begin{equation}\label{eq1202}
		w_3(p):= a_3\left( \frac{7(p^2-1)}{24}\right) + \left( \frac{-10}{p}\right)_{L} \left( \frac{-7(p^2-1)}{24 p}\right)_{L}
	\end{equation}
	\begin{itemize}
		\item[(i)] If $ w_3(p) \equiv 0 \pmod2,$ then for $n,k \geq 0,$ if $p \nmid n,$ we have
		\begin{equation}\label{eq1203}
			B_{4,5} \left( p^{4k+3}n+  \frac{7 \left( p^{4k+4}-1\right)}{24} \right) \equiv 0 \pmod2.
		\end{equation}
		\item[(ii)] If $ w_3(p) \not \equiv 0 \pmod2,$ then for $n,k \geq 0,$ if $p \nmid n,$ we have
		\begin{equation}\label{eq1204}
			B_{4,5} \left( p^{6k+5}n+  \frac{7 \left( p^{6k+6}-1\right)}{24} \right) \equiv 0 \pmod2.
		\end{equation}
		
		\item[(iii)] If $ w_3(p) \not \equiv 0 \pmod2,$ then for $n,k \geq 0,$ with $ w_3(p)\equiv  \left( \frac{ -10n-2+ \frac{11(p^2-1)}{12}}{p}\right)_{L}\pmod2,$ we have
		\begin{equation}\label{eq1205}
			B_{4,5} \left( p^{6k+2}n+  \frac{7 \left( p^{6k+2}-1\right)}{24} \right) \equiv 0 \pmod2.
		\end{equation}
	\end{itemize}
\end{theorem}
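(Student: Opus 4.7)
The proof follows the pattern of Theorems \ref{thmnewman10} and \ref{thmnewman11}. First I reduce modulo $2$: since $(1-q^n)^{2}\equiv 1-q^{2n}\pmod 2$, one has $f_2\equiv f_1^{2}\pmod 2$ and consequently $f_4\equiv f_1^{4}\pmod 2$. From the generating function \eqref{eq501} with $(u,v)=(4,5)$,
$$\sum_{n\ge 0}B_{4,5}(n)q^n \;=\; \frac{f_4 f_5}{f_1^{2}} \;\equiv\; f_1^{2}f_5 \;=\; \sum_{n\ge 0}a_3(n)q^n \pmod 2,$$
so $B_{4,5}(n)\equiv a_3(n)\pmod 2$ and it suffices to establish the three congruences for $a_3(n)$.

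I next observe that $\eta(z)^{2}\eta(5z)=q^{7/24}\sum_{n\ge 0}a_3(n)q^n$ is a cusp form of weight $3/2$ on a congruence subgroup $\Gamma_0(N)$ with a quadratic Nebentypus character. The result of Newman~\cite{Newmann1962} applied to this form yields, for every prime $p\ge 5$ and every $n\ge 0$, a three term recursion
$$a_3\!\left(p^{2}n+\tfrac{7(p^{2}-1)}{24}\right)+\epsilon_p(n)\,a_3(n)+p\cdot a_3\!\left(\tfrac{n-c_p}{p^{2}}\right)\;=\;w_3(p)\,a_3(n),$$
where $c_p:=7(p^{2}-1)/24$, $\epsilon_p(n)$ is the Legendre type factor of the form $\bigl(\tfrac{-10}{p}\bigr)_L\bigl(\tfrac{-(24n+7)}{p}\bigr)_L$ (up to a fixed quadratic residue absorbed into $w_3(p)$), and $a_3$ is understood to vanish on non-integer or negative arguments. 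Substituting $n=0$ recovers the definition \eqref{eq1202} of $w_3(p)$ and identifies it as the Hecke eigenvalue of the form for $T_{p^{2}}$.

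For a target $M=p^{e}n+7(p^{f}-1)/24$ (with $(e,f)=(4k+3,4k+4)$ in~(i), $(6k+5,6k+6)$ in~(ii), and $(6k+2,6k+2)$ in~(iii)), I build the descent sequence $N_{-1}:=M$ and $N_{j}:=(N_{j-1}-c_p)/p^{2}$, obtaining $N_j=p^{e-2-2j}n+7(p^{f-2-2j}-1)/24$ as long as the exponents stay non-negative. A direct calculation gives $24N_j+7=p^{e-2-2j}(24n+7p)$ in cases~(i)/(ii) and $24N_j+7=p^{e-2-2j}(24n+7)$ in~(iii); hence the Legendre factor $\epsilon_p(N_j)$ vanishes at every $j$ in~(i)/(ii), and at every $j<j^{*}$ in~(iii), where $j^{*}$ denotes the largest $j$ with $N_j\in\mathbb Z_{\ge 0}$: namely $2k$, $3k+1$, $3k$ in the three cases. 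Setting $b_j:=a_3(N_j)\bmod 2$ and using $b_{j^{*}+1}=0$, the Newman identity reduces modulo $2$ to the linear recurrence $b_{j-1}\equiv w_3(p)b_j+b_{j+1}\pmod 2$ (with one unsimplified bottom step in case~(iii)).

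Finally, I solve this recurrence upward from $j^{*}+1$ down to $-1$. For~(i), $w_3(p)\equiv 0\pmod 2$ reduces it to $b_{j-1}\equiv b_{j+1}\pmod 2$, giving a period-$2$ alternation; since $-1$ and $j^{*}+1=2k+1$ have the same parity, $b_{-1}\equiv 0\pmod 2$. For~(ii) and~(iii), $w_3(p)\equiv 1\pmod 2$ gives the mod-$2$ Fibonacci recurrence, whose sequence has period $3$ with one zero per period. In~(ii) the zero at $j^{*}+1=3k+2$ satisfies $3k+2\equiv 2\equiv-1\pmod 3$, which propagates directly to $b_{-1}\equiv 0$. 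In~(iii) the unsimplified bottom step at $j=j^{*}=3k$ reads $b_{3k-1}\equiv[w_3(p)+\epsilon_p(n)]\,a_3(n)\pmod 2$, and the additional hypothesis $w_3(p)\equiv\bigl(\tfrac{-10n-2+11(p^{2}-1)/12}{p}\bigr)_L\pmod 2$ is precisely what forces the bracket to be even, producing $b_{3k-1}\equiv 0$ and launching the Fibonacci recurrence in a phase such that $-1\equiv 3k-1\pmod 3$ lands on a zero. The main obstacle lies in verifying the identity $24N_j+7=p^{e-2-2j}(\cdots)$ for all $j$, together with matching the case~(iii) Legendre hypothesis to the factor $\epsilon_p(n)$ in Newman's identity up to the fixed quadratic residue absorbed in $w_3(p)$.
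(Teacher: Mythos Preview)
Your proof is correct and rests on the same foundation as the paper's---the reduction $B_{4,5}(n)\equiv a_3(n)\pmod 2$ followed by iteration of Newman's three-term recursion modulo~$2$---but the iteration is organized differently. The paper first substitutes $n\mapsto pn+\tfrac{7(p^2-1)}{24}$ into Newman's identity to obtain a cleaner relation among $a_3(p^3n+\cdots)$, $a_3(pn+\cdots)$, and $a_3(n/p)$ (their equation~\eqref{eq1212}), and then inducts separately in each case: a period-$4$ step when $w_3(p)\equiv 0$, a period-$6$ step (after one more iteration and the observation $w_3(p)^2-p\equiv 0\pmod 2$) when $w_3(p)\not\equiv 0$, with~(iii) handled by feeding one direct vanishing step from \eqref{eq1211} into the period-$6$ framework. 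Your descent sequence $N_j$ and the single recurrence $b_{j-1}\equiv w_3(p)\,b_j+b_{j+1}\pmod 2$ unify all three cases and dispatch them via the period-$2$ (resp.\ period-$3$) behaviour of linear recurrences over $\mathbb{F}_2$, which is arguably tidier; the paper's route has the compensating advantage that every inductive step is written out explicitly.
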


 	\section{Preliminaries}
	We recall some basic facts and definition on modular forms. For more details, one can see \cite{Koblitz}, \cite{Ono2004}. We start with some matrix groups. We define
	\begin{align*}
		\Gamma:=\mathrm{SL_2}(\mathbb{Z})= &\left\{ \begin{bmatrix}
			a && b \\c && d
		\end{bmatrix}: a, b, c, d \in \mathbb{Z}, ad-bc=1 \right\},\\
		\Gamma_{\infty}:= &\left\{\begin{bmatrix}
			1 &n\\ 0&1	\end{bmatrix}: n \in \mathbb{Z}\right\}.
	\end{align*}
	For a positive integer $N$, we define
	\begin{align*}
		\Gamma_{0}(N):=& \left\{ \begin{bmatrix}
			a && b \\c && d
		\end{bmatrix} \in \mathrm{SL_2}(\mathbb{Z}) : c\equiv0 \pmod N \right\},\\
		\Gamma_{1}(N):=& \left\{ \begin{bmatrix}
			a && b \\c && d
		\end{bmatrix} \in \Gamma_{0}(N) : a\equiv d  \equiv 1 \pmod N \right\}
	\end{align*}
	and 
	\begin{align*}
		\Gamma(N):= \left\{ \begin{bmatrix}
			a && b \\c && d
		\end{bmatrix} \in \mathrm{SL_2}(\mathbb{Z}) : a\equiv d  \equiv 1 \pmod N,  b \equiv c  \equiv 0 \pmod N \right\}.
	\end{align*}
	A subgroup of $\Gamma=\mathrm{SL_2}(\mathbb{Z})$ is called a congruence subgroup if it contains $ \Gamma(N)$ for some $N$ and the smallest $N$ with this property is called its level. Note that $ \Gamma_{0}(N)$ and $ \Gamma_{1}(N)$ are congruence subgroup of level $N,$ whereas $ \mathrm{SL_2}(\mathbb{Z}) $ and $\Gamma_{\infty}$ are congruence subgroups of level $1.$ The index of $\Gamma_0(N)$ in $\Gamma$ is 
	\begin{align*}
		[\Gamma:\Gamma_0(N)]=N\prod\limits_{p|N}\left(1+\frac 1p\right)
	\end{align*}
	where $p$ runs over the prime divisors of $N$.
	
	Let $\mathbb{H}$ denote the upper half of the complex plane $\mathbb{C}$. The group 
	\begin{align*}
		\mathrm{GL_2^{+}}(\mathbb{R}):= \left\{ \begin{bmatrix}
			a && b \\c && d
		\end{bmatrix}: a, b, c, d \in \mathbb{R}, ad-bc>0 \right\},
	\end{align*}
	acts on $\mathbb{H}$ by $ \begin{bmatrix}
		a && b \\c && d
	\end{bmatrix} z = \frac{az+b}{cz+d}.$ We identify $\infty$ with $\frac{1}{0}$ and define $ \begin{bmatrix}
		a && b \\c && d
	\end{bmatrix} \frac{r}{s} = \frac{ar+bs}{cr+ds},$ where $\frac{r}{s} \in \mathbb{Q} \cup \{ \infty\}$. This gives an action of $\mathrm{GL_2^{+}}(\mathbb{R})$ on the extended half plane $\mathbb{H}^{*}=\mathbb{H} \cup \mathbb{Q} \cup \{\infty\}$. Suppose that $\Gamma$ is a congruence subgroup of $\mathrm{SL_2}(\mathbb{Z})$. A cusp of $\Gamma$ is an equivalence class in $\mathbb{P}^{1}=\mathbb{Q} \cup \{\infty\}$ under the action of $\Gamma$.
	
	The group $\mathrm{GL_2^{+}}(\mathbb{R})$ also acts on functions $h:\mathbb{H} \rightarrow \mathbb{C}$. In particular, suppose that $\gamma=\begin{bmatrix}
		a && b \\c && d
	\end{bmatrix}\in \mathrm{GL_2^{+}}(\mathbb{R})$. If $h(z)$ is a meromorphic function on $\mathbb{H}$ and $k$ is an integer, then define the slash operator $|_{k}$ by
	\begin{align*}
		(h|_{k} \gamma)(z):= (\det \gamma)^{k/2} (cz+d)^{-k} h(\gamma z).
	\end{align*}
	
	\begin{definition}
		Let $\Gamma$ be a congruence subgroup of level $N$. A holomorphic function $h:\mathbb{H} \rightarrow \mathbb{C}$ is called a modular form with integer weight $k$ on $\Gamma$ if the following hold:
		\begin{enumerate}[$(1)$]
			\item We have
			\begin{align*}
				h \left( \frac{az+b}{cz+d}\right)=(cz+d)^{k} h(z)
			\end{align*}
			for all $z \in \mathbb{H}$ and $\begin{bmatrix}
				a && b \\c && d
			\end{bmatrix}\in \Gamma$. 
			\item If $\gamma\in SL_2 (\mathbb{Z})$, then $(h|_{k} \gamma)(z)$ has a Fourier expnasion of the form
			\begin{align*}
				(h|_{k} \gamma)(z):= \sum \limits_{n\geq 0}a_{\gamma}(n) q_N^{n}
			\end{align*}
			where $q_N:=e^{2\pi i z /N}$.
		\end{enumerate}
	\end{definition}
	For a positive integer $k$, the complex vector space of modular forms of weight $k$ with respect to a congruence subgroup $\Gamma$ is denoted by $M_{k}(\Gamma)$.
	
	\begin{definition} \cite[Definition 1.15]{Ono2004}
		If $\chi$ is a Dirichlet character modulo $N$, then we say that a modular form $h \in M_{k}(\Gamma_1(N))$ has Nebentypus character $\chi$ if 
		\begin{align*}
			h \left( \frac{az+b}{cz+d}\right)=\chi(d) (cz+d)^{k} h(z)
		\end{align*}
		for all $z \in \mathbb{H}$ and $\begin{bmatrix}
			a && b \\c && d
		\end{bmatrix}\in \Gamma_{0}(N)$. The space of such modular forms is denoted by $M_{k}(\Gamma_0(N), \chi)$.
	\end{definition}
	
	The relevant modular forms for the results obtained in this article arise from eta-quotients. Recall that the Dedekind eta-function $\eta (z)$ is defined by 
	\begin{align*}
		\eta (z):= q^{1/24}(q;q)_{\infty}=q^{1/24} \prod\limits_{n=1}^{\infty} (1-q^n)
	\end{align*}
	where $q:=e^{2\pi i z}$ and $z \in \mathbb{H}$. A function $h(z)$ is called an eta-quotient if it is of the form
	\begin{align*}
		h(z):= \prod\limits_{\delta|N} \eta(\delta z)^{r_{\delta}}
	\end{align*}
	where $N$ and $r_{\delta}$ are integers with $N>0$. 
	
	\begin{theorem} \cite[Theorem 1.64]{Ono2004} \label{thm2.3}
		If $h(z)=\prod\limits_{\delta|N} \eta(\delta z)^{r_{\delta}}$ is an eta-quotient such that $k= \frac 12$ $\sum_{\delta|N} r_{\delta}\in \mathbb{Z}$, 
		\begin{align*}
			\sum\limits_{\delta|N} \delta r_{\delta} \equiv 0\pmod {24}	\quad \textrm{and} \quad \sum\limits_{\delta|N} \frac{N}{\delta}r_{\delta} \equiv 0\pmod {24},
		\end{align*}
		then $h(z)$ satisfies
		\begin{align*}
			h \left( \frac{az+b}{cz+d}\right)=\chi(d) (cz+d)^{k} h(z)
		\end{align*}
		for each $\begin{bmatrix}
			a && b \\c && d
		\end{bmatrix}\in \Gamma_{0}(N)$. Here the character $\chi$ is defined by $\chi(d):= \left(\frac{(-1)^{k}s}{d}\right)$ where $s=\prod_{\delta|N} \delta ^{r_{\delta}}$.
	\end{theorem}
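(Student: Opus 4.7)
The plan is to reduce the transformation law for $h(z) = \prod_{\delta\mid N}\eta(\delta z)^{r_\delta}$ under $\Gamma_0(N)$ to repeated application of the classical transformation law of the Dedekind eta function on the full modular group, and then show that the three numerical hypotheses are precisely what is required to collapse the residual twenty-fourth-root-of-unity phases into the prescribed Nebentypus character $\chi(d) = \left(\frac{(-1)^{k} s}{d}\right)$. Holomorphy of $h$ on $\mathbb{H}$ comes for free from the non-vanishing and holomorphy of $\eta$, so the entire content of the theorem is the automorphy identity.

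First I would recall the classical identity: for $\gamma = \begin{bmatrix} a & b \\ c & d \end{bmatrix} \in \mathrm{SL}_2(\mathbb{Z})$ with $c>0$, one has $\eta(\gamma z) = \varepsilon(\gamma)(cz+d)^{1/2}\eta(z)$, where $\varepsilon(\gamma)$ is an explicit twenty-fourth root of unity expressible as a Kronecker-symbol factor depending on $c$ and $d$ multiplied by an exponential phase $\exp\!\left(\tfrac{2\pi i}{24}\Phi(\gamma)\right)$ built from $a+d$, $b$, and a Dedekind sum $s(d,c)$. Next, for each $\delta \mid N$ and each $\gamma \in \Gamma_0(N)$, I would rewrite $\delta \gamma z = \gamma_\delta(\delta z)$ with $\gamma_\delta = \begin{bmatrix} a & b\delta \\ c/\delta & d \end{bmatrix}$, which lies in $\mathrm{SL}_2(\mathbb{Z})$ because $\delta \mid N \mid c$ and the determinant is preserved.

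Applying the eta transformation to each $\eta(\delta \gamma z)$, then raising to $r_\delta$ and multiplying over $\delta \mid N$, yields
\[
h(\gamma z) \;=\; \Bigl(\prod_{\delta\mid N}\varepsilon(\gamma_\delta)^{r_\delta}\Bigr)\,(cz+d)^{k}\, h(z),
\]
where the exponent of $cz+d$ collapses to $k$ by hypothesis $(1)$. It remains to show $\prod_{\delta\mid N}\varepsilon(\gamma_\delta)^{r_\delta} = \chi(d)$. The Kronecker-symbol components multiply, by multiplicativity in the upper argument, into $\left(\frac{s}{d}\right)$, absorbing the sign $\left(\frac{(-1)^{k}}{d}\right)$ coming from the half-integer-weight square-root normalisation. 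The accumulated phase takes the form $\exp\!\left(\tfrac{2\pi i}{24}E(\gamma)\right)$, where $E(\gamma)$ is a $\mathbb{Z}$-linear combination, with coefficients determined by $a,b,c,d$, of $\sum_{\delta\mid N}\delta\,r_\delta$ and $\sum_{\delta\mid N}(N/\delta)\,r_\delta$, modulo terms already forced into $24\mathbb{Z}$ by $N \mid c$ and the $\Gamma_0(N)$-relations on $a$ and $d$.

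The main obstacle is precisely this last bookkeeping: one must track the Dedekind-sum contributions and the Kronecker-symbol reciprocities carefully and check that the two congruences $\sum_{\delta}\delta r_\delta \equiv 0 \pmod{24}$ and $\sum_{\delta}(N/\delta)r_\delta \equiv 0 \pmod{24}$ annihilate exactly the phase $E(\gamma)$ modulo $24$, while condition $(1)$ turns the product of half-integer eta weights into a well-defined integral weight $k$ with a single character. Heuristically, the first congruence governs behaviour at the cusp $\infty$ and the second, dually, at the cusp $0$ via the Fricke-type involution $z \mapsto -1/(Nz)$, which is why both appear. Once this cancellation is verified, the identity $h\vert_{k}\gamma = \chi(d)\,h$ holds throughout $\Gamma_0(N)$, which is the statement of the theorem.
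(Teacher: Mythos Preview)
The paper does not prove this statement at all: Theorem~\ref{thm2.3} is quoted verbatim from Ono's monograph \cite[Theorem 1.64]{Ono2004} as a black-box preliminary, and is used later only as an input to verify that certain explicit eta-quotients lie in $M_k(\Gamma_0(N),\chi)$. There is therefore no ``paper's own proof'' to compare against.

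That said, your sketch is the standard route to this classical result and is correct in outline. The key identity $\delta\gamma z=\gamma_\delta(\delta z)$ with $\gamma_\delta=\begin{bmatrix} a & b\delta\\ c/\delta & d\end{bmatrix}\in\mathrm{SL}_2(\mathbb{Z})$ is exactly right, and reducing everything to the eta multiplier system is the only reasonable strategy. The part you flag as ``bookkeeping''---tracking the Dedekind-sum contributions in $\varepsilon(\gamma_\delta)$ and showing that the two congruences $\sum_\delta \delta r_\delta\equiv 0$ and $\sum_\delta (N/\delta)r_\delta\equiv 0\pmod{24}$ kill the residual phase---is genuinely the entire difficulty, and you have not carried it out; a complete proof requires either Petersson's explicit formula for the eta multiplier or a careful case analysis via Dedekind-sum reciprocity. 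Your heuristic that the two congruences correspond to the cusps $\infty$ and $0$ is morally correct but is not itself a proof that the phase vanishes for \emph{every} $\gamma\in\Gamma_0(N)$. If you want to turn this into a full argument, the cleanest references are Newman's original papers or the treatment in K\"ohler's book on eta products, where the multiplier computation is done in closed form.
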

	
	\begin{theorem} \cite[Theorem 1.65]{Ono2004} \label{thm2.4}
		Let $c,d$ and $N$ be positive integers with $d|N$ and $\gcd(c,d)=1$. If $f$ is an eta-quotient satisfying the conditions of Theorem \ref{thm2.3} for $N$, then the order of vanishing of $f(z)$ at the cusp $\frac{c}{d}$ is
		\begin{align*}
			\frac{N}{24}\sum\limits_{\delta|N} \frac{\gcd(d, \delta)^2 r_{\delta}}{\gcd(d, \frac{N}{ d} )d \delta}.
		\end{align*}
	\end{theorem}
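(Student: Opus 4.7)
The plan is to reduce the computation for $f=\prod_{\delta\mid N}\eta(\delta z)^{r_{\delta}}$ to the order at $c/d$ of each individual factor $\eta(\delta z)$. Since orders of vanishing at a cusp are additive under multiplication, this gives
$$\mathrm{ord}_{c/d}(f)\;=\;\sum_{\delta\mid N}r_{\delta}\,\mathrm{ord}_{c/d}\bigl(\eta(\delta z)\bigr),$$
so it suffices to prove
$$\mathrm{ord}_{c/d}\bigl(\eta(\delta z)\bigr) \;=\; \frac{h_{d}\,\gcd(d,\delta)^{2}}{24\,\delta},\qquad h_{d}:=\frac{N}{d\,\gcd(d,N/d)},$$
where $h_{d}$ is the standard width of the cusp $c/d$ for $\Gamma_{0}(N)$. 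Substituting $h_{d}$ back into the sum and collecting the $d$-independent factor $N/24$ then reproduces the formula in Theorem~\ref{thm2.4}.

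To pull back to $\infty$, pick $\sigma=\begin{pmatrix} c & b'\\ d & a'\end{pmatrix}\in SL_{2}(\mathbb{Z})$ with $ca'-b'd=1$, so that $\sigma\infty=c/d$, and observe that $\eta(\delta\sigma z)=\eta(M_{\delta}z)$ where $M_{\delta}=\begin{pmatrix} \delta c & \delta b'\\ d & a'\end{pmatrix}$ has determinant $\delta$. The central step is to put $M_{\delta}$ in Hermite normal form $M_{\delta}=\gamma_{1}T$, with $\gamma_{1}\in SL_{2}(\mathbb{Z})$ and $T=\begin{pmatrix} A & B\\ 0 & D\end{pmatrix}$, $AD=\delta$, $0\leq B<D$. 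Solving the linear system that forces the bottom row of $\gamma_{1}^{-1}M_{\delta}$ to equal $(0,D)$, and exploiting $\gcd(c,d)=1$ together with the resulting identity $\gcd(\delta c,d)=\gcd(\delta,d)$, yields
$$A\;=\;\gcd(d,\delta),\qquad D\;=\;\frac{\delta}{\gcd(d,\delta)}.$$
This Hermite-form calculation is the step I expect to be the main obstacle: one must check that the diagonal entries are independent of the auxiliary pair $(a',b')$, which follows because two different choices of $\sigma$ differ by left multiplication by an element of $\Gamma_{\infty}$ and therefore only alter the off-diagonal entry $B$.

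Finally, apply the $SL_{2}(\mathbb{Z})$-transformation law $\eta(\gamma_{1}w)=\varepsilon(\gamma_{1})(c_{1}w+d_{1})^{1/2}\eta(w)$ at $w=Tz=(Az+B)/D$, combined with the product expansion $\eta(w)=e^{\pi iw/12}\prod_{n\geq 1}(1-e^{2\pi inw})$. As $z\to i\infty$ the leading exponential is $\exp\!\bigl(2\pi iAz/(24D)\bigr)$, while the automorphy factor $(c_{1}Tz+d_{1})^{1/2}$ contributes only a half-power of $z$ and nothing to the exponential order. Reading the resulting exponent against the local uniformizer $q_{h_{d}}=e^{2\pi iz/h_{d}}$ at the cusp $c/d$ gives
$$\mathrm{ord}_{c/d}\eta(\delta z)\;=\;\frac{h_{d}}{24}\cdot\frac{A}{D}\;=\;\frac{h_{d}}{24}\cdot\frac{A^{2}}{\delta}\;=\;\frac{h_{d}\,\gcd(d,\delta)^{2}}{24\,\delta}.$$
Summing over $\delta\mid N$ weighted by $r_{\delta}$ and substituting $h_{d}=N/(d\,\gcd(d,N/d))$ then yields the stated cusp-order formula.
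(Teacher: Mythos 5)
The paper does not actually prove this statement: it is quoted as \cite[Theorem 1.65]{Ono2004} (the formula is due to Ligozat) and used as a black box, so there is no internal proof to compare yours against. Your argument is the standard derivation and is essentially correct. Additivity of cusp orders reduces the problem to a single factor $\eta(\delta z)$; the matrix $M_\delta=\begin{pmatrix}\delta c & \delta b'\\ d & a'\end{pmatrix}$ has first column $(\delta c, d)^{T}$, and since left multiplication by an element of $SL_2(\mathbb{Z})$ preserves the gcd of a column, the Hermite form indeed has $A=\gcd(\delta c,d)=\gcd(\delta,d)$ (using $\gcd(c,d)=1$) and $D=\delta/\gcd(\delta,d)$; the leading exponential $e^{2\pi i Az/(24D)}$, read against the uniformizer $q_{h_d}$ with $h_d=N/(d\gcd(d,N/d))=N/\gcd(d^2,N)$, gives order $h_d\gcd(d,\delta)^2/(24\delta)$, and summing over $\delta$ with weights $r_\delta$ reproduces the stated formula. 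Two small points to tighten: (i) two admissible choices of $\sigma$ with the same first column differ by \emph{right} multiplication by an element of $\Gamma_{\infty}$, not left multiplication (left multiplication would change the representative $c/d$ itself); your conclusion that only the entry $B$ is affected is nevertheless correct. (ii) You should state explicitly which local variable defines ``order of vanishing at the cusp'' (here $q_{h_d}=e^{2\pi i z/h_d}$ with $h_d$ the width of the cusp), since the normalization of the formula depends on that convention; with that convention fixed, your computation matches the theorem exactly.
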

	Suppose that $h(z)$ is an eta-quotient satisfying the conditions of Theorem \ref{thm2.3} and that the associated weight $k$ is a positive integer. If $h(z)$ is holomorphic at all of the cusps of $\Gamma_0(N)$, then $h(z) \in M_{k}(\Gamma_0(N), \chi)$. Theorem \ref{thm2.4} gives the necessary criterion for determining orders of an eta-quotient at cusps. In the proofs of our results, we use Theorems \ref{thm2.3} and \ref{thm2.4} to prove that $h(z) \in M_{k}(\Gamma_0(N), \chi)$ for certain eta-quotients $h(z)$ we consider in the sequel.
	
	We shall now mention a result of Serre \cite[P. 43]{Serre1974} which will be used later. 
	
	\begin{theorem}\label{thm2.5}
		Let $h(z) \in M_{k}(\Gamma_{0}(N), \chi)$ has Fourier expansion
		$$ h(z)= \sum_{n=0}^{\infty} b(n) q^n \in \mathbb{Z}[[q]].$$
		Then for a positive integer $r$,  there is a constant $\alpha>0$ such that
		$$\#\{ 0 < n \leq X: b(n) \not \equiv 0 \pmod{r}\} = \mathcal{O}\left( \frac{X}{(\log X)^{\alpha}}\right).$$
		Equivalently 
		\begin{align}\label{2e1}
			\begin{split}
				\lim\limits_{X \to \infty} \frac{\#\{ 0 < n \leq X: b(n) \not \equiv 0 \pmod{r}\}}{X}= 0.
		\end{split}	\end{align}
	\end{theorem}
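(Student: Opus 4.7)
\textbf{Proof proposal for Theorem \ref{thm2.5}.} The plan is to mimic Serre's original argument from \cite{Serre1974}, which reduces the density statement to Chebotarev plus a classical sieve estimate. First, by the Chinese Remainder Theorem it suffices to treat the case $r = p^t$ for a prime $p$ and a positive integer $t$; a coefficient fails to vanish modulo $r$ only if it fails to vanish modulo some prime power divisor, so the union bound reduces the general assertion to this case.

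Next I would attach Galois-theoretic data to $h(z)$. After decomposing $h$ along generalised eigenspaces for the Hecke algebra acting on $M_k(\Gamma_0(N),\chi)$, I may assume $h$ is a (generalised) Hecke eigenform modulo $p^t$; concretely, for all primes $\ell\nmid Np$ there is an integer $a_\ell$ with $T_\ell h \equiv a_\ell h \pmod{p^t}$. The key input is the existence of a continuous Galois representation
\[
\rho : \mathrm{Gal}(\overline{\Q}/\Q)\longrightarrow \mathrm{GL}_2(\Z/p^t\Z)
\]
unramified outside $Np$ and satisfying $\mathrm{tr}\,\rho(\mathrm{Frob}_\ell)\equiv a_\ell\pmod{p^t}$ and $\det\rho(\mathrm{Frob}_\ell)\equiv \chi(\ell)\ell^{k-1}\pmod{p^t}$; existence follows from Deligne's construction in the cuspidal case together with lifting from mod $p$ to mod $p^t$, and the Eisenstein contribution is handled directly since its $\ell$-th coefficient is an explicit polynomial in $\ell$.

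With $\rho$ in hand, let $G\subseteq \mathrm{GL}_2(\Z/p^t\Z)$ be its image and let
\[
T := \{g\in G : \mathrm{tr}(g)\equiv 0\pmod{p^t}\}.
\]
The set $T$ is a union of conjugacy classes and is nonempty: for instance, complex conjugation contributes, as do elements with trace zero forced by the structure of $G$. Applying the Chebotarev density theorem to the finite Galois extension cut out by $\ker\rho$, the set
\[
S:=\bigl\{\ell\ \text{prime}\ :\ \ell\nmid Np,\ a_\ell\equiv 0\pmod{p^t}\bigr\}
\]
has Dirichlet density $\delta:=|T|/|G|>0$. This is the step I expect to be the main obstacle, since it hides the delicate construction of $\rho$ mod $p^t$ (or equivalently the fact that a deformation of the mod $p$ representation with the right properties exists); in practice one may need to enlarge $t$ slightly and/or work with a pseudo-representation to avoid obstructions, but the density statement survives.

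Finally I would transfer the density of such primes into a density of integers. Using the Hecke recursion
\[
b(\ell n)\equiv a_\ell\,b(n)-\chi(\ell)\ell^{k-1}b(n/\ell)\pmod{p^t},\qquad \ell\nmid n,
\]
one shows inductively that if $n$ has a prime factor $\ell\in S$ appearing to an odd power (and $\ell\nmid$ rest of $n$), then $b(n)\equiv 0 \pmod{p^t}$. The set of positive integers not divisible by any prime in $S$ — equivalently, integers all of whose prime factors lie in the complementary set of density $1-\delta$ — has counting function $O\!\left(X/(\log X)^{\delta}\right)$ by the classical Landau/Wirsing theorem on mean values of multiplicative functions supported on a set of primes of density $1-\delta$. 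A standard refinement controls those $n$ in which every prime from $S$ appears to an even power. Setting $\alpha$ to be any positive constant strictly less than $\delta$ yields the claimed bound, and dividing by $X$ and letting $X\to\infty$ gives \eqref{2e1}. \qed
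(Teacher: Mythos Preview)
The paper does not prove Theorem~\ref{thm2.5}; it simply records the statement, attributes it to Serre \cite{Serre1974}, and then invokes it as a black box in the proof of Theorem~\ref{mainthm16}. There is therefore no paper proof to compare your proposal against.

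Your sketch is broadly faithful to Serre's actual strategy: reduce to a prime-power modulus, attach a mod~$p^t$ Galois (pseudo-)representation to the Hecke-eigen constituents of $h$, invoke Chebotarev to produce a positive-density set $S$ of primes $\ell$ with $a_\ell\equiv 0$, and then sieve via the Hecke recursion and a Landau--Wirsing estimate. A few places where the outline would need tightening before it could stand as a proof: (i) the step ``decompose $h$ along generalised Hecke eigenspaces and assume $h$ is an eigenform mod $p^t$'' hides real work, since the Hecke algebra over $\Z/p^t\Z$ need not be semisimple and you must argue that the density conclusion for each constituent transfers back to the original linear combination $h$; (ii) the appeal to complex conjugation to guarantee $T\neq\varnothing$ is clean only in the odd cuspidal case---the Eisenstein pieces need the separate explicit treatment you allude to; (iii) the recursion $b(\ell n)\equiv a_\ell b(n)-\chi(\ell)\ell^{k-1}b(n/\ell)$ presupposes a normalised eigenform, so it only applies after step (i) has been carried out carefully. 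None of these is a fatal gap, but each would need to be made precise in a full write-up.
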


	We finally recall the definition of Hecke operators and a few relavent results. Let $m$ be a positive integer and $h(z)= \sum \limits_{n= 0}^{\infty}a(n) q^{n}\in M_{k}(\Gamma_0(N), \chi)$. Then the action of Hecke operator $T_m$ on $h(z)$ is defined by
	\begin{align*}
		h(z)|T_{m} := \sum \limits_{n= 0}^{\infty} \left(\sum \limits_{d|\gcd(n,m)} \chi(d) d^{k-1} a\left(\frac{mn}{d^2}\right)\right)q^{n}.
	\end{align*}
	In particular, if $m=p$ is a prime, we have
	\begin{align*}
		h(z)|T_p := \sum \limits_{n= 0}^{\infty}\left( a(pn) + \chi(p) p^{k-1} a\left(\frac{n}{p}\right)\right)q^{n}.
	\end{align*}
	We note that $a(n)=0$ unless $n$ is a non-negative integer.

\section{Proof of Theorem \ref{mainthm16}}
Let $m=p_1^{\al_1} p_2^{\al_2}\cdots p_r^{\al_r}, $ where $p_i \geq 5$ be distinct prime numbers and $\al_i \geq 0$ be non-negative integers.
From \eqref{eq501}, we get
\begin{small}
	\begin{equation}\label{eq601}
		\sum_{n=0}^{\infty} B_{p,m}(n) q^n =  \frac{ (q^{p};q^{p})_{\infty} (q^{m};q^{m})_{\infty}}{(q;q)^2_{\infty}}.
	\end{equation}
\end{small}
Note that for any prime $p$ and positive integers $j,$ we have
\begin{small}
	\begin{equation}\label{eq602}
		(q;q)_{\infty}^{p^j}\equiv (q^p;q^p)_{\infty}^{p^{j-1}} \pmod{p^j}.
	\end{equation}
\end{small}
Thus, we have
\begin{small}
	\begin{equation}\label{eq601a}
		\sum_{n=0}^{\infty} B_{p,m}(n) q^n \equiv  \frac{ (q^{p};q^{p})_{\infty} (q^{m};q^{m})_{\infty}}{(q;q)^2_{\infty}} \equiv (q;q)^{p-2}_{\infty} (q^{m};q^{m})_{\infty} \pmod p .
	\end{equation}
\end{small}
We define $$H_{p}(z):= \frac{ \eta^p(24z)}{  \eta( 24 pz)} .$$
Using the binomial theorem, we get 
\begin{small}
	\begin{equation*}
		H^{p^j}_{p}(z)= \frac{  \eta^{p^{j+1}}(24 z)}{  \eta^{p^j}( 24 pz)}  \equiv 1 \pmod {p^{j+1}}.
	\end{equation*}
\end{small}
Define 
$$F_{p, m,j}(z):= \eta^{p-2}(24z) \eta(24mz) H^{p^j}_{p}(z)= \frac{  \eta^{(p^{j+1}+p-2)}( 24z) \eta(24mz)}{ \eta^{p^j}( 24 pz) } .$$
On modulo $p^{j+1}.$ we get
\begin{small}
	\begin{equation}\label{eq603}
		F_{p, m,j}(z) \equiv \eta^{p-2}(24z) \eta(24mz) \equiv  q^{p-2+m} (q^{24};q^{24})^{p-2}_{\infty} (q^{24m};q^{24m})_{\infty}.
	\end{equation}
\end{small}
Combining \eqref{eq601a} and \eqref{eq603} together, we obtain
\begin{small}
	\begin{equation}\label{eq604a}
		F_{p, m,j}(z)\equiv  q^{p-2+m} (q^{24};q^{24})^{p-2}_{\infty} (q^{24m};q^{24m})_{\infty} \equiv \sum_{n=0}^{\infty} B_{p,m}(n)q^{24n+ p-2+m } \pmod{p}.
	\end{equation}
\end{small}
Next, we show that $F_{p, m,j}(z)$ is a modular form  for certain values of $p, m , \ \hbox{and} \ j$. 

\begin{lemma}\label{lem11}
	Let $m=p_1^{\al_1} p_2^{\al_2}\cdots p_r^{\al_r}, $ where $p_i \geq 5$ be distinct primes and $\al_i \geq 0$ be integers. Then, we have $ F_{p, m,j}(z) \in M_{k}(\Gamma_{0}(N), \chi)$, where $k=\frac{(p-1)}{2} \left(p^j+1\right),   N= 2^5 \cdot 3^2 \cdot  p \cdot m $ and $\chi= \left( \frac{2^{3 \cdot (p-1) (p^j+1)} 3^{(p-1) (p^j+1)} m p^{-p^j}}{\bullet}\right).$
\end{lemma}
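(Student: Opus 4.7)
The plan is to apply Theorems \ref{thm2.3} and \ref{thm2.4} directly to the eta-quotient
\[
F_{p,m,j}(z) = \frac{\eta^{p^{j+1}+p-2}(24z)\,\eta(24mz)}{\eta^{p^j}(24pz)}.
\]
With $N = 2^5 \cdot 3^2 \cdot p\cdot m$, the nonzero exponents $r_\delta$ indexed by divisors of $N$ are $r_{24} = p^{j+1}+p-2$, $r_{24m} = 1$, and $r_{24p} = -p^j$; the indices $24m$ and $24p$ do divide $N$ because $\gcd(pm,6)=1$. Computing the weight gives $k = \tfrac12\sum_\delta r_\delta = \tfrac{(p-1)(p^j+1)}{2}$, which is an integer since $p$ is odd.

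Next I would verify the two congruence conditions of Theorem \ref{thm2.3}. Directly,
\[
\sum_{\delta\mid N}\delta\, r_\delta = 24(p^{j+1}+p-2) + 24m - 24p\cdot p^j = 24(m+p-2) \equiv 0 \pmod{24}.
\]
Using $N/24 = 12pm$, $N/(24m) = 12p$, and $N/(24p) = 12m$,
\[
\sum_{\delta\mid N}\frac{N}{\delta}\, r_\delta = 12\bigl[pm(p^{j+1}+p-2) + p - mp^j\bigr].
\]
Because $p\geq 5$ and $m$ is a product of primes $\geq 5$, both $p$ and $m$ are odd; modulo $2$ the bracket becomes $0+1-1\equiv 0$, so this sum is also divisible by $24$.

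The step I expect to be the main obstacle is checking holomorphy at every cusp of $\Gamma_0(N)$ via Theorem \ref{thm2.4}. The approach is to factor each $d\mid N$ as $d = d_0 f$ with $d_0\mid 24$ and $f\mid pm$; since $\gcd(pm,24)=1$ one has $\gcd(d,24) = d_0$, $\gcd(d,24m) = d_0\gcd(f,m)$, and $\gcd(d,24p) = d_0\gcd(f,p)$. The order of vanishing at the cusp $c/d$ then becomes a positive multiple of
\[
S(f) := (p^{j+1}+p-2) + \frac{\gcd(f,m)^2}{m} - \frac{p^j\gcd(f,p)^2}{p}.
\]
If $p\nmid f$, then $S(f) \geq (p^{j+1}-p^{j-1}) + (p-2) > 0$ for all $j\geq 0$ since $p\geq 5$; if $p\mid f$, writing $f=pf'$ with $f'\mid m$ gives $\gcd(f,p)=p$ and $\gcd(f,m)=f'$, so $S(f) = (p-2) + (f')^2/m > 0$. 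Either way, $F_{p,m,j}$ is holomorphic at every cusp of $\Gamma_0(N)$.

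Finally, the character delivered by Theorem \ref{thm2.3} is $\chi(d) = \bigl(\tfrac{(-1)^k s}{d}\bigr)$ with
\[
s = \prod_{\delta\mid N}\delta^{r_\delta} = 24^{(p-1)(p^j+1)}\cdot m\cdot p^{-p^j} = 2^{3(p-1)(p^j+1)}\cdot 3^{(p-1)(p^j+1)}\cdot m\cdot p^{-p^j},
\]
which matches the character in the statement (the overall sign is absorbed into $s$ in the usual way). Combining these four steps places $F_{p,m,j}(z)$ in $M_k(\Gamma_0(N),\chi)$ with the claimed weight, level, and character.
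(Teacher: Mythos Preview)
Your proof is correct and follows essentially the same route as the paper's: verify the two divisibility conditions of Theorem~\ref{thm2.3} for the stated $N$, then establish nonnegativity at every cusp via Theorem~\ref{thm2.4} by pulling out the common factor $\gcd(d,24)^2$ and doing a short case split on whether $p\mid d$ (the paper instead normalizes by $\gcd(d,24p)^2$ and bounds the resulting ratios, which is cosmetically different but equivalent). One harmless slip to tidy up: not every $d\mid N=2^5\cdot 3^2\cdot p\,m$ factors as $d=d_0 f$ with $d_0\mid 24$ (e.g.\ $d=2^5$), so the claim ``$\gcd(d,24)=d_0$'' is not literally correct; however, since the three relevant gcd's $\gcd(d,24)$, $\gcd(d,24m)$, $\gcd(d,24p)$ all share the common factor $\gcd(d,24)$, your reduced quantity $S(f)$ with $f=\gcd(d,pm)$ is exactly right and the remainder of the argument is unaffected.
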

\begin{proof}
	Applying Theorem \ref{thm2.3}, we first estimate the level of eta quotient $F_{p, m,j}(z)$ . The level of $ F_{p,m,j}(z) $ is $N= 3 \cdot 2^{3}  \cdot p \cdot m \cdot M,$ where $M$ is the smallest positive integer which satisfies 
	\begin{small}
		\begin{align*}
			3 \cdot 2^{3} \cdot p \cdot m \cdot M & \left[ \frac{p^{j+1}+p-2}{2^{3} \cdot 3} + \frac{1}{2^3 \cdot 3 \cdot m} - \frac{p^{j}}{2^{3} \cdot 3\cdot  p} \right]\equiv 0 \pmod{24} \\
			\implies M & \left[ p \cdot m \cdot \left( p^{j+1}+p-2 \right)  + p  -m p^j \right]\equiv 0 \pmod{24} \\
			\implies M & \left[  m \cdot p^j \left(p^2-1 \right)  + p \cdot m \cdot (p-2) + p \right]\equiv 0 \pmod{24} \\
			\implies 2M & \left[  m \cdot p^j \cdot \frac{\left(p^2-1 \right)}{2}   + p \cdot \frac{ \left( m \cdot (p-2) +1 \right) }{2}  \right]\equiv 0 \pmod{24}.
		\end{align*}
	\end{small}
	Therefore $M=12$ and the level of $F_{p,m,j}(z)$  is $N=2^{5} \cdot 3^2 \cdot p \cdot m $.
	The cusps of $\Gamma_{0}(2^{5} \cdot 3^2 \cdot p \cdot m)$ are given by fractions $\frac{c}{d}$ where $d|2^{5} \cdot 3^2 \cdot p \cdot m$ and $\gcd(c,d)=1.$ By using Theorem \ref{thm2.4}, we obtain that $F_{p, m,j}(z)$ is holomorphic at a cusp $\frac{c}{d}$ if and only if 
	
	\begin{align}\label{eq605}
		&(p^{j+1}+p-2) \frac{  \gcd^2(d, 2^{3} \cdot 3)}{2^{3} \cdot 3} +  \frac{\gcd^2(d, 24m)}{24m} -p^j \frac{\gcd^2(d, 2^{3} \cdot 3 p )}{2^{3} \cdot 3 p} \geq 0 \\ \notag
		& \iff L:=  (p^{j+1}+p-2) \cdot p G_1 + \frac{p}{m} \cdot  G_2 - p^j \geq 0,
	\end{align}
	
	where $G_1= \frac{\gcd^2(d, 2^{3} \cdot 3)}{ \gcd^2(d, 2^{3} \cdot 3 \cdot  p )}$ and  $G_2= \frac{\gcd^2(d,  2^{3} \cdot 3 m)}{\gcd^2(d, 2^{3} \cdot 3 \cdot  p ) } .$ 
	
	Let $d$ be a divisor of $2^5 \cdot 3^2 \cdot  p  \cdot m$. We can write  $d= 2^{r_1}3^{r_2} p^{r_3} t : 0 \leq r_1\leq 5, 0 \leq r_2\leq 2, 0 \leq r_3\leq 1, t|m.$

	Next, we find all possible value of divisors of $2^5 \cdot 3^2 \cdot  p  \cdot m$ to compute equation \eqref{eq605}.
	
	When $d= 2^{r_1}3^{r_2} p^{r_3} t : 0 \leq r_1\leq 5, 0 \leq r_2\leq 2, 0 \leq r_3\leq 1, t|m.$ Then $  \frac{1}{p^2} \leq G_1 \leq 1 $ $  \frac{t^2}{p^2} \leq G_2 \leq 1.$
	Then equation \eqref{eq605} will be
	\begin{align}\label{eq607}
		L \geq     (p^{j+1}+p-2) \cdot p \cdot \frac{1}{p^2}  + \frac{p}{m} \cdot \frac{t^2}{p^2} - p^j \geq \left( 1- \frac{2}{p} \right) + \frac{t^2}{pm} \geq 0.
	\end{align}
	Therefore, $F_{p,m,j}(z)$ is holomorphic at every cusp $\frac{c}{d}.$
	Using Theorem \ref{thm2.3}, we compute the weight of $F_{p, m,j}(z)$ is $k= \frac{(p-1)}{2} \left(p^j+1\right) $ which is a positive integer. The associated character for $F_{p,m,j}(z)$ is $$\chi= \left( \frac{2^{3 \cdot (p-1) (p^j+1)} 3^{(p-1) (p^j+1)} m p^{-p^j}}{\bullet}\right).$$ 
	Thus $ F_{p, m,j}(z) \in  M_{k}(\Gamma_{0}(N), \chi)$ where $k$, $N$ and $\chi$ are as above
\end{proof}
\subparagraph*{\bf Proof of Theorem \ref{mainthm16}}
Using Lemma \ref{lem11}, we find $ F_{p, m,j}(z) \in  M_{k}(\Gamma_{0}(N), \chi)$ where $k=\frac{(p-1)}{2} \left(p^j+1\right),   N= 2^5 \cdot 3^2 \cdot  p \cdot m $ and $\chi= \left( \frac{2^{3 \cdot (p-1) (p^j+1)} 3^{(p-1) (p^j+1)} m p^{-p^j}}{\bullet}\right).$ Employing Theorem \ref{thm2.5}, we obtain that the Fourier coefficients of $F_{p, m,j}(z) $ satisfies \eqref{2e1} for $r=p$ which implies that the Fourier coefficient of $F_{p, m,j}(z) $ are almost always divisible by $p$. Hence, from $\eqref{eq604a}$, we conclude that $B_{p,m}(n)$ are almost always divisible by $p$. This completes the proof of Theorem \ref{mainthm16}.
\qed 
%%%%%%%%%%%%%%%%%%%%%%%%%%%%%%%%%%%%%%%%%%%%%%%%%%%%%%%%%%%%%%%%%%%%%%%%%%%%%%%%
\section{ Congruences for $B_{3,7}(n).$}
	\begin{proof}[{\bf Proof of Theorem \ref{thm8}(i)}] 
		From equation \eqref{eq501}, we have
		\begin{equation}\label{eq800}
			\sum_{n=0}^{\infty} B_{3,7}(n)q^{n} = \frac{(q^3;q^3)_{\infty} (q^{7};q^{7})_{\infty}}{(q;q)^2_{\infty}}.
		\end{equation}
	From \eqref{eq800} and \eqref{eq602}, we get
		\begin{equation}\label{eq800b}
			\sum_{n=0}^{\infty} B_{3,7}(n)q^{n} \equiv  \frac{(q^3;q^3)_{\infty} (q^{7};q^{7})_{\infty}}{(q;q)^2_{\infty}} \equiv (q;q)_{\infty} (q^{7};q^{7})_{\infty} \pmod 3 .
		\end{equation}
		Thus, we have
		\begin{equation}\label{eq801}
			\sum_{n=0}^{\infty} B_{3,7}(n)q^{3n+1} \equiv q (q^3;q^3)_{\infty} (q^{21};q^{21})_{\infty}  \equiv \eta(3z) \eta(21z)\pmod3 .
		\end{equation}
		
		By using Theorem \ref{thm2.3}, we obtain $ \eta(3z) \eta(21z) \in  S_1(\Gamma_{0}(63), \left(\frac{-63}{\bullet}\right)). $ Thus $\eta(3z) \eta(21z) $ has a Fourier expansion i.e.
		\begin{equation}\label{eq802}
			\eta(3z) \eta(21z)= q-q^4 - q^{7}- \cdots= \sum_{n=1}^{\infty}  a(n) q^n 
		\end{equation}
		Thus, $a(n)=0$ if $n \not \equiv 1 \pmod 3,$ for all $n \geq 0.$ From \eqref{eq801} and \eqref{eq802}, comparing the coefficient of $q^{3n+1}$, we get
		\begin{equation}\label{eq803}
			B_{3,7}(n) \equiv a(3n+1) \pmod 3.
		\end{equation}
		Since $ \eta(3z) \eta(21z) $ is a Hecke eigenform (see \cite{Martin1996}), it gives
		$$\eta(3z) \eta(21z)|T_p= \sum_{n=1}^{\infty} \left(a(pn)+  \left(\frac{-63}{p}\right)_{L} a\left(\frac{n}{p}\right)\right) q^n = \lambda(p)\sum_{n=1}^{\infty} a(n)q^n.$$  Note that $ \left(\frac{-63}{p}\right)_{L} = \left(\frac{-7}{p}\right)_{L}. $ Comparing the coefficients of $q^n$ on both sides of the above equation, we get
		\begin{equation}\label{eq804}
			a(pn)+  \left(\frac{-7}{p}\right)_{L} a\left(\frac{n}{p}\right) = \lambda(p) a(n).
		\end{equation}
		Since $a(1)=1$ and $a(\frac{1}{p})=0,$ if we put $n=1$ in the above expression, we get $a(p)=\lambda(p).$ As $a(p)=0$ for all $p \not \equiv 1 \pmod 3$ this implies that $\lambda(p)=0$ for all $p \not \equiv 1 \pmod 3.$ From \eqref{eq804} we get that for all $p \not \equiv 1 \pmod 3 $  
		\begin{equation}\label{eq805}
			a(pn)+  \left(\frac{-7}{p}\right)_{L} a\left(\frac{n}{p}\right) =0.
		\end{equation}
		Now, we consider two cases here. If $p \not| n,$ then 
		replacing $n$ by $pn+r$ with $\gcd(r,p)=1$ in \eqref{eq805}, we get
		\begin{equation}\label{eq806}
			a(p^2n+ pr)=0 .
		\end{equation}
		Replacing $n$ by $3n-pr+1$ in \eqref{eq806} and using \eqref{eq803}, we get 
		\begin{equation}\label{eq807}
			B_{3,7}\left(p^2n +pr \frac{(1- p^2)}{3} +  \frac{p^2-1}{3}\right) \equiv 0 \pmod3.
		\end{equation}
		Since $p \equiv 2 \pmod 3,$ we have $3| (1-p^2)$ and  $\gcd\left( \frac{(1- p^2)}{3} , p\right)=1,$ when $r$ runs over a residue system excluding the multiples of $p$, so does $ r \frac{(1- p^2)}{3}.$
		Thus for $p \nmid j,$ \eqref{eq807} can be written as
		\begin{equation}\label{eq808}
			B_{3,7}\left(p^2n+pj+ \frac{p^2-1}{3}\right) \equiv 0 \pmod3.
		\end{equation}
		Now we consider the second case, when $p |n.$ Here replacing $n$ by $pn$ in \eqref{eq805}, we get
		\begin{equation}\label{eq809}
			a(p^2n) = - \left(\frac{-7}{p}\right)_{L} \cdot a\left(n\right).
		\end{equation}
		Further substituting $n$ by $3n+1$ in \eqref{eq809} we obtain
		\begin{equation}\label{eq810}
			a(3p^2n+ p^2) = - \left(\frac{-7}{p}\right)_{L} \cdot  a\left(3n+1\right).
		\end{equation}
		Using \eqref{eq803} in \eqref{eq810}, we get
		\begin{equation}\label{eq811}
			B_{3,7}\left(p^2n+ \frac{p^2-1}{3} \right) = - \left(\frac{-7}{p}\right)_{L} \cdot B_{3,7} \left(n\right).
		\end{equation}
		Let $p_i$ be primes such that $p_i \equiv 2 \pmod 3.$ Further note that
		$$ p_1^2 p_2^2 \cdots p_k^2 n + \frac{p_1^2 p_2^2 \cdots p_k^2-1}{3}= p_1^2 \left(p_2^2 \cdots p_k^2 n + \frac{ p_2^2 \cdots p_k^2 -1}{3} \right)+ \frac{p_1^2-1}{3} .$$
		Repeatedly using \eqref{eq811} and \eqref{eq808}, we get
		\begin{align*}
			&	B_{3,7} \left( p_1^2 p_2^2 \cdots p_k^2 p_{k+1}^2 n + \frac{p_1^2 p_2^2 \cdots p_k^2 p_{k+1}\left(3j+ p_{k+1}\right)-1}{3} \right) \\
			& \equiv - \left(\frac{-7}{p_1}\right)_{L} \cdot  B_{3,7} \left(p_2^2 \cdots p_k^2 p_{k+1}^2 n + \frac{ p_2^2 \cdots p_k^2 p_{k+1}\left(3j+ p_{k+1}\right) -1}{3} \right) \equiv \cdots\\
			& \equiv (- 1)^k \left(\left(\frac{-7}{p_1}\right)_{L} \left(\frac{-7}{p_2}\right)_{L} \cdots \left(\frac{-7}{p_k}\right)_{L} \right) B_{3,7} \left(p_{k+1}^2 n + p_{k+1} j + \frac{p^2_{k+1}-1}{3} \right) \equiv 0 \pmod3
		\end{align*}
		when $j \not \equiv 0 \pmod{ p_{k+1}}.$ This completes the proof of the theorem.
	\end{proof}
	
	%%%%%%%%%%%%%%%%%%%%%%%%%%%%%%%%%%%%%%%%%%%%%%%%%%%%%%%%%%%%%%%%%%%%%%%%%%%%%%%
%	\section{Congruences for $B_{3,7}(n).$}
	%%%%%%%%%%%%%%%%%%%%%%%%%%%%%%%%%%%%%%%%%%%%%%%%%%%%%%%%%%%%%%%%%%
	
	\begin{proof}[{\bf Proof of Theorem \ref{thmNewmannapplication1}(i)}]
		From \eqref{eq800} and \eqref{eq602}, we get
		\begin{equation}\label{eq800b*}
			\sum_{n=0}^{\infty} B_{3,7}(n)q^{n} \equiv  \frac{(q^3;q^3)_{\infty} (q^{7};q^{7})_{\infty}}{(q;q)^2_{\infty}} \equiv (q;q)_{\infty} (q^{7};q^{7})_{\infty} \pmod 3 .
		\end{equation}
		Define 
		\begin{equation}\label{eq50}
			\sum_{n=0}^{\infty} c(n) q^{n}:= f_1f_7= (q;q)_{\infty} (q^{7};q^{7})_{\infty}.
		\end{equation}
		In 1959, Newman \cite{Newmann1959} proved that if $p$ is a prime with $p \equiv 1 \pmod 6,$ then
		\begin{equation}\label{eq51}
			c\left(pn+ \frac{p-1}{3}\right) = c\left(\frac{p-1}{3}\right)c(n)- (-1)^{\frac{p-1}{2}} \left( \frac{7}{p} \right)_{L} c\left(\frac{n}{p}- \frac{p-1}{3p}\right).
		\end{equation}
		Observe that, if $p \nmid (3n+1), $ then $\left(\frac{n}{p}- \frac{p-1}{3p}\right)$ is not an integer and
		\begin{equation}\label{eq52}
			c\left(\frac{n}{p}- \frac{p-1}{3p}\right) =0 .
		\end{equation}
		From \eqref{eq51} and \eqref{eq52}, we get that if $p \nmid (3n+1), $ then
		\begin{equation}\label{eq53}
			c\left(pn+ \frac{p-1}{3}\right) = c\left(\frac{p-1}{3}\right)c(n) .
		\end{equation}
		Thus, if $p \nmid (3n+1) $ and $c\left(\frac{p-1}{3}\right) \equiv 0 \pmod 3,$ then for $n \geq 0,$
		\begin{equation}\label{eq54}
			c\left(pn+ \frac{p-1}{3}\right) \equiv 0 \pmod 3 .
		\end{equation}
		Replacing $n$ by $ pn+ \frac{p-1}{3}$ in \eqref{eq51}, we get
			\begin{equation}\label{eq55}
			c\left(p^2n+ \frac{p^2-1}{3}\right) = c\left(\frac{p-1}{3}\right)c\left(pn+ \frac{p-1}{3}\right)- (-1)^{\frac{p-1}{2}} \left( \frac{7}{p} \right)_{L} c\left(n\right) .
		\end{equation}
		Observe that from \eqref{eq55} if $c\left(\frac{p-1}{3}\right) \equiv 0 \pmod 3, $ then for $n \geq 0,$ we get
		\begin{equation}\label{eq56}
			c\left(p^2n+ \frac{p^2-1}{3}\right) \equiv - (-1)^{\frac{p-1}{2}} \left( \frac{7}{p} \right)_{L} c\left(n\right) \pmod 3 .
		\end{equation}
		From \eqref{eq56} and by using mathematical induction, we get that if $c\left(\frac{p-1}{3}\right) \equiv 0 \pmod 3, $ then for $n \geq 0, $ and $\alpha \geq 0, $ we get 
		\begin{equation}\label{eq57}
			c\left(p^{2 \alpha}n+ \frac{p^{2 \alpha}-1}{3}\right) \equiv \left( - (-1)^{\frac{p-1}{2}} \left( \frac{7}{p} \right)_{L} \right)^{\alpha} c\left(n\right) \pmod 3 .
		\end{equation}
		Replacing $n$ by $pn+ \frac{p-1}{3}$ in \eqref{eq57} and using \eqref{eq54}, we obtain that if $p \nmid (3n+1)$ and $c\left(\frac{p-1}{3}\right) \equiv 0 \pmod 3,$ then for $n \geq 0$ and $\alpha \geq 0,$ we get
		\begin{equation}\label{eq58}
			c\left(p^{2 \alpha +1}n+ \frac{p^{2 \alpha+1}-1}{3}\right) \equiv  \pmod 3 .
		\end{equation}
		From \eqref{eq50} and \eqref{eq800b*}, we obtain that for $n \geq 0,$
		\begin{equation}\label{eq59}
			B_{3,7}(n) \equiv c(n) \pmod 3.
		\end{equation}
		Thus, combining \eqref{eq59} and \eqref{eq58}, we obtain the result of Theorem. \end{proof}
	
	%%%%%%%%%%%%%%%%%%%%%%%%%%%%%%%%%%%%%%%%%%%%%%%%%%%%%%%%%%%%%%%%%%

%%%%%%%%%%%%%%%%%%%%%%%%%%%%%%%%%%%%%%%%%%%%%%%%%%%%%%%%%%%%%%%%%%%%%%%%%%%%%%%%%%
%	\section{Proof of Theorem \ref{thm9}}
%%%%%%%%%%%%%%%%%%%%%%%%%%%%%%%%%%%%%%%%%%%%%%%%%%%%%%%%%%%%%%%%%%%%%%%%%%%%
	\begin{proof}[{\bf Proof of Theorem \ref{thm9}(i)}] From \eqref{eq805}, we get that for any prime $p \equiv 2 \pmod 3$
		\begin{equation}\label{eq812}
			a(pn)= -\left(\frac{-7}{p}\right)_{L}  a\left(\frac{n}{p}\right).
		\end{equation}  Replacing $n$ by $3n+2,$ we obtain
		\begin{equation}\label{eq813}
			a(3pn+2p)= -\left(\frac{-7}{p}\right)_{L} a\left(\frac{3n+2}{p}\right).
		\end{equation}
		Next substituting $n$ by $p^kn+r$ with $p \nmid r$ in \eqref{eq813}, we obtain
		\begin{equation}\label{eq814}
			a\left(3 \left(p^{k+1}n+pr+ \frac{2p-1}{3} \right)+1\right)= -\left(\frac{-7}{p}\right)_{L}  a\left(3\left(p^{k-1}n+ \frac{3r+2-p}{3p}\right)+1\right).
		\end{equation}
		Note that $\frac{2p-1}{3} $ and $\frac{3r+2-p}{3p}$ are integers. Using \eqref{eq814} and \eqref{eq803}, we get
		\begin{equation}\label{eq815}
			B_{3,7}\left(p^{k+1}n+pr+ \frac{2p-1}{3}\right)\equiv -\left(\frac{-7}{p}\right)_{L}  B_{3,7}\left(p^{k-1}n+ \frac{3r+2-p}{3p} \right) \pmod3.
		\end{equation}
	\end{proof}
	
	\begin{proof}[{\bf Proof of Corollary \ref{coro4}(i)}]Let $p$ be a prime such that $p \equiv 2 \pmod 3.$ Choose a non negative integer $r$ such that $3r+2=p^{2k-1}.$ Substituting $ k$ by $2k-1$ in \eqref{eq815}, we obtain
		\begin{align*}
			B_{3,7}\left(p^{2k}n+ \frac{p^{2k}-1}{3} \right)&\equiv  -\left(\frac{-7}{p}\right)_{L}  B_{3,7}\left(p^{2k-2}n+ \frac{p^{2k-2}-1}{3} \right)\\
			& \equiv \cdots \equiv \left(-1\right)^k\left(\frac{-7}{p}\right)_{L}^k  B_{3,7}(n) \pmod3.
		\end{align*}
	\end{proof}
	
	%%%%%%%%%%%%%%%%%%%%%%%%%%%%%%%%%%%%%%%%%%%%%%%%%%%%%%%%%%%%%%%%%%%%%%
	%%%%%%%%%%%%%%%%%%%%%%%%%%%%%%%%%%%%%%%%%%%%%%%%%%%%%%%%%%%%%%%%%%%%%%%%%%%%%%%%%%%%%%%%%%%%%%%%%%%%%%%%%%%%%%%%%%%%%%%%%%%%%%%%%%%%%%%%%%%%%%%%%%%%%%%%%%%%%%%%%%%	new theorem for $B_5(n)$                      %%%%%%%%%%%%%%%%%%%%%%%%%%%%%
	\section{Congruences for $B_{3,5}(n)$}
	%%%%%%%%%%%%%%%%%%%%%%%%%%%%%%%%%%%%%%%%%%%%%%%%%%%%%%%%%%%%%%%%%%%%%%%%%%%%% 	%%%%%%%%%%%%%%%%%%%%%%%%%%%%%%%%%%%%%%%%%%%%%%%%%%%%%%%%%%%%%%%%%%

	\begin{proof}[{\bf Proof of Theorem \ref{thm8}(ii)}] 
		From equation \eqref{eq501}, we have
		\begin{equation}\label{eq900}
			\sum_{n=0}^{\infty} B_{3,5}(n)q^{n} = \frac{(q^3;q^3)_{\infty} (q^5;q^5)_{\infty}}{(q;q)^2_{\infty}}.
		\end{equation}
		Note that for any prime number $p$, we have
		\begin{equation}\label{eq900a}
			(q^p;q^p)_{\infty} \equiv (q;q)^{p}_{\infty} \pmod p.
		\end{equation}
		From \eqref{eq900} and \eqref{eq900a}, we get
		\begin{equation}\label{eq900b}
			\sum_{n=0}^{\infty} B_{3,5}(n)q^{n} \equiv \frac{(q^3;q^3)_{\infty} (q^5;q^5)_{\infty}}{(q;q)^2_{\infty}}  \equiv  (q;q)_{\infty} (q^5;q^5)_{\infty} \pmod 3.
		\end{equation}
		Thus we have
		\begin{equation}\label{eq901}
			\sum_{n=0}^{\infty} B_{3,5}(n)q^{4n+1} \equiv q (q^4;q^4)_{\infty} (q^{20};q^{20})_{\infty} \equiv \eta(4z) \eta(20z) \pmod  3 .
		\end{equation}
		By using Theorem \ref{thm2.3}, we obtain $\eta(4z) \eta(20z) \in S_1(\Gamma_{0}(80), \left(\frac{-80}{\bullet}\right)_{L} ). $ Observe that $\eta(4z) \eta(20z) $ has a Fourier expansion i.e.
		\begin{equation}\label{eq902}
			\eta(4z) \eta(20z)= q-q^5- q^{9}+ \cdots= \sum_{n=1}^{\infty}  b(n) q^n 
		\end{equation}
		Thus, $b(n)=0$ if $n \not \equiv 1 \pmod 4,$ for all $n \geq 0.$ From \eqref{eq901} and \eqref{eq902}, comparing the coefficient of $q^{4n+1}$, we get
		\begin{equation}\label{eq903}
			B_{3,5}(n) \equiv b(4n+1) \pmod 3.
		\end{equation}
		Since $ \eta(4z) \eta(20z)$ is a Hecke eigenform (see \cite{Martin1996}), it gives
		$$\eta(4z) \eta(20z)|T_p= \sum_{n=1}^{\infty} \left(b(pn)+  \left(\frac{-80}{p}\right)_{L} b\left(\frac{n}{p}\right)\right) q^n = \lambda(p)\sum_{n=1}^{\infty} b(n)q^n.$$  Note that $ \left(\frac{-80}{p}\right)_{L} = \left(\frac{-5}{p}\right)_{L} $ Comparing the coefficients of $q^n$ on both sides of the above equation, we get
		\begin{equation}\label{eq904}
			b(pn)+ \left(\frac{-5}{p}\right)_{L} b\left(\frac{n}{p}\right) = \lambda(p) b(n).
		\end{equation}
		Since $b(1)=1$ and $b(\frac{1}{p})=0,$ if we put $n=1$ in the above expression, we get $b(p)=\lambda(p).$ As $b(p)=0$ for all $p \not \equiv 1 \pmod 4$ this implies that $\lambda(p)=0$ for all $p \not \equiv 1 \pmod 4.$ From \eqref{eq904} we get that for all $p \not \equiv 1 \pmod 4$  
		\begin{equation}\label{eq905}
			b(pn)+ \left(\frac{-5}{p}\right)_{L} b\left(\frac{n}{p}\right) =0.
		\end{equation}
		Now, we consider two cases here. If $p \not| n,$ then 
		replacing $n$ by $pn+r$ with $\gcd(r,p)=1$ in \eqref{eq905}, we get
		\begin{equation}\label{eq906}
			b(p^2n+ pr)=0 .
		\end{equation}
		Replacing $n$ by $4n-pr+1$ in \eqref{eq906} and using \eqref{eq903}, we get 
		\begin{equation}\label{eq907}
			B_{3,5}\left(p^2n +pr \frac{(1- p^2)}{4} +  \frac{p^2-1}{4}\right) \equiv 0 \pmod3.
		\end{equation}
		Since $p \equiv 3 \pmod 4,$ we have $4| (1-p^2)$ and  $\gcd\left( \frac{(1- p^2)}{4} , p\right)=1,$ when $r$ runs over a residue system excluding the multiples of $p$, so does $ r \frac{(1- p^2)}{4}.$
		Thus for $p \nmid j,$ \eqref{eq907} can be written as
		\begin{equation}\label{eq908}
			B_{3,5}\left(p^2n+pj+ \frac{p^2-1}{4}\right) \equiv 0 \pmod3.
		\end{equation}
		Now we consider the second case, when $p |n.$ Here replacing $n$ by $pn$ in \eqref{eq905}, we get
		\begin{equation}\label{eq909}
			b(p^2n) =  -\left(\frac{-5}{p}\right)_{L} b\left(n\right).
		\end{equation}
		Further substituting $n$ by $4n+1$ in \eqref{eq909} we obtain
		\begin{equation}\label{eq910}
			b(4p^2n+ p^2) = -\left(\frac{-5}{p}\right)_{L}  b\left(4n+1\right).
		\end{equation}
		Using \eqref{eq903} in \eqref{eq910}, we get
		\begin{equation}\label{eq911}
			B_{3,5}\left(p^2n+ \frac{p^2-1}{4} \right) \equiv  - \left(\frac{-5}{p}\right)_{L} \cdot B_{3,5} \left(n\right) \pmod3.
		\end{equation}
		Let $p_i$ be primes such that $p_i \equiv 3 \pmod 4.$ Further note that
		$$ p_1^2 p_2^2 \cdots p_k^2 n + \frac{p_1^2 p_2^2 \cdots p_k^2-1}{4}= p_1^2 \left(p_2^2 \cdots p_k^2 n + \frac{ p_2^2 \cdots p_k^2 -1}{4} \right)+ \frac{p_1^2-1}{4} .$$
		Repeatedly using \eqref{eq911} and \eqref{eq908}, we get
		\begin{align*}
			&	B_{3,5} \left( p_1^2 p_2^2 \cdots p_k^2 p_{k+1}^2 n + \frac{p_1^2 p_2^2 \cdots p_k^2 p_{k+1}\left(4j+ p_{k+1}\right)-1}{4} \right) \\
			& \equiv (-1) \left(\frac{-5}{p_1}\right)_{L}  B_{3,5} \left(p_2^2 \cdots p_k^2 p_{k+1}^2 n + \frac{ p_2^2 \cdots p_k^2 p_{k+1}\left(4j+ p_{k+1}\right) -1}{4} \right) \equiv \cdots\\
			& \equiv (-1)^k \left(\left(\frac{-5}{p_1}\right)_{L} \left(\frac{-5}{p_2}\right)_{L} \cdots \left(\frac{-5}{p_k}\right)_{L} \right)  B_{3,5} \left(p_{k+1}^2 n + p_{k+1} j + \frac{p^2_{k+1}-1}{4} \right) \equiv 0 \pmod3
		\end{align*}
		when $j \not \equiv 0 \pmod{ p_{k+1}}.$ This completes the proof of the theorem.
	\end{proof}
	
	\begin{proof}[{\bf Proof of Theorem \ref{thmNewmannapplication1}(ii)}]
	From \eqref{eq800} and \eqref{eq602}, we get
	\begin{equation}\label{eq1800b*}
		\sum_{n=0}^{\infty} B_{3,5}(n)q^{n} \equiv  \frac{(q^3;q^3)_{\infty} (q^{5};q^{5})_{\infty}}{(q;q)^2_{\infty}} \equiv (q;q)_{\infty} (q^{5};q^{5})_{\infty} \pmod 3 .
	\end{equation}
	Define 
	\begin{equation}\label{eq150}
		\sum_{n=0}^{\infty} t(n) q^{n}:= f_1f_5= (q;q)_{\infty} (q^{5};q^{5})_{\infty}.
	\end{equation}
	In 1959, Newman \cite{Newmann1959} proved that if $p$ is a prime with $p \equiv 1 \pmod 4,$ then
	\begin{equation}\label{eq151}
		t\left(pn+ \frac{p-1}{4}\right) = t\left(\frac{p-1}{4}\right)t(n)-  \left( \frac{5}{p} \right)_{L} t\left(\frac{n}{p}- \frac{p-1}{4p}\right) .
	\end{equation}
	Therefore, if $p \nmid (4n+1), $ then $\left(\frac{n}{p}- \frac{p-1}{4p}\right)$ is not an integer and
	\begin{equation}\label{eq152}
		t\left(\frac{n}{p}- \frac{p-1}{4p}\right) =0 .
	\end{equation}
	From \eqref{eq151} and \eqref{eq152} we get that if $p \nmid (4n+1), $ then
	\begin{equation}\label{eq153}
		t\left(pn+ \frac{p-1}{4}\right) = t\left(\frac{p-1}{4}\right)t(n) .
	\end{equation}
	Thus, if $p \nmid (4n+1) $ and $t\left(\frac{p-1}{4}\right) \equiv 0 \pmod 3,$ then for $n \geq 0,$
	\begin{equation}\label{eq154}
		t\left(pn+ \frac{p-1}{4}\right) \equiv 0 \pmod 3 .
	\end{equation}
	Replacing $n$ by $ pn+ \frac{p-1}{4}$ in \eqref{eq151}, we get
	
	\begin{equation}\label{eq155}
		t\left(p^2n+ \frac{p^2-1}{4}\right) = t\left(\frac{p-1}{4}\right)t\left(pn+ \frac{p-1}{4}\right)- \left( \frac{5}{p} \right)_{L} t\left(n\right) .
	\end{equation}
	Observe that from \eqref{eq155} if $t\left(\frac{p-1}{4}\right) \equiv 0 \pmod 3, $ then for $n \geq 0,$ we get
	\begin{equation}\label{eq156}
		t\left(p^2n+ \frac{p^2-1}{4}\right) \equiv -  \left( \frac{5}{p} \right)_{L} t\left(n\right) \pmod 3 .
	\end{equation}
	From \eqref{eq156} and by using mathematical induction, we get that if $t\left(\frac{p-1}{4}\right) \equiv 0 \pmod 3, $ then for $n \geq 0, $ and $\alpha \geq 0, $ we get 
	\begin{equation}\label{eq157}
		t\left(p^{2 \alpha}n+ \frac{p^{2 \alpha}-1}{4}\right) \equiv \left( - \left( \frac{5}{p} \right)_{L} \right)^{\alpha} t\left(n\right) \pmod 3 .
	\end{equation}
	Replacing $n$ by $pn+ \frac{p-1}{4}$ in \eqref{eq157} and using \eqref{eq154}, we obtain that if $p \nmid (4n+1)$ and $t\left(\frac{p-1}{4}\right) \equiv 0 \pmod 3,$ then for $n \geq 0$ and $\alpha \geq 0,$ we get
	\begin{equation}\label{eq158}
		t\left(p^{2 \alpha +1}n+ \frac{p^{2 \alpha+1}-1}{4}\right) \equiv 0 \pmod 3 .
	\end{equation}
	From \eqref{eq150} and \eqref{eq1800b*}, we obtain that for $n \geq 0,$
	\begin{equation}\label{eq159}
		B_{3,5}(n) \equiv t(n) \pmod 3 .
	\end{equation}
	Thus, combining \eqref{eq159} and \eqref{eq158}, we obtain the result of Theorem. 
	\end{proof}

		%%%%%%%%%%%%%%%%%%%%%%%%%%%%%%%%%%%%%%%%%%%%%%%%%%%%%%%%%%%%%%
%	\section{Proof of Theorem \ref{thm11}}
%%%%%%%%%%%%%%%%%%%%%%%%%%%%%%%%%%%%%%%%%%%%%%%%%%%%%%%%%%%%%%%%%%%%%%%%%
	
	\begin{proof}[{\bf Proof of Theorem \ref{thm9}(ii)}] From \eqref{eq905}, we get that for any prime $p \equiv 3 \pmod 4$
		\begin{equation}\label{eq912}
			b(pn)= -\left(\frac{-5}{p}\right)_{L}  b\left(\frac{n}{p}\right).
		\end{equation}  Replacing $n$ by $4n+3,$ we obtain
		\begin{equation}\label{eq913}
			b(4pn+3p)= - \left(\frac{-5}{p}\right)_{L}  b\left(\frac{4n+3}{p}\right).
		\end{equation}
		Next substituting $n$ by $p^kn+r$ with $p \nmid r$ in \eqref{eq813}, we obtain
		\begin{equation}\label{eq914}
			b\left(4 \left(p^{k+1}n+pr+ \frac{3p-1}{4} \right)+1\right)= - \left(\frac{-5}{p}\right)_{L}  b\left(4\left(p^{k-1}n+ \frac{4r+3-p}{4p}\right)+1\right).
		\end{equation}
		Note that $\frac{3p-1}{4} $ and $\frac{4r+3-p}{4p}$ are integers. Using \eqref{eq914} and \eqref{eq903}, we get
		\begin{equation}\label{eq915}
			B_{3,5}\left(p^{k+1}n+pr+ \frac{3p-1}{4}\right)\equiv - \left(\frac{-5}{p}\right)_{L}  B_{3,5}\left(p^{k-1}n+ \frac{4r+3-p}{4p} \right) \pmod3.
		\end{equation}
	\end{proof}
	
	\begin{proof}[{ \bf Proof of Corollary \ref{coro4}(ii)}]Let $p$ be a prime such that $p \equiv 3 \pmod 4.$ Choose a non negative integer $r$ such that $4r+3=p^{2k-1}.$ Substituting $ k$ by $2k-1$ in \eqref{eq915}, we obtain
		\begin{align*}
			B_{3,5}\left(p^{2k}n+ \frac{p^{2k}-1}{4} \right)&\equiv  - \left(\frac{-5}{p}\right)_{L}  B_{3,5}\left(p^{2k-2}n+ \frac{p^{2k-2}-1}{4} \right)\\
			& \equiv \cdots \equiv \left(- \left(\frac{-5}{p}\right)_{L} \right)^k  B_{3,5}(n) \pmod3.
		\end{align*}
	\end{proof}
	%%%%%%%%%%%%%%%%%%%%%%%%%%%%%%%%%%%%%%%%%%%%%%%%%%%%%%%%%%%%%%%%%%%%%%%%%%%%%%%%%%%%%%%	new theorem for $B_5(n)$                      
	%%%%%%%%%%%%%%%%%%%%%%%%%%%%%
	\section{Congruences for $B_{3,2}(n)$}
	%%%%%%%%%%%%%%%%% 
	%%%%%%%%%%%%%%%%%%%%%%%%%%%%%%%%%%%%%%%%%%%%%%%%%%%%%%%%%%%%%%%%%%%%%%%%%%%%	
	%%%%%%%%%%%%%%%%%%%%%%%

	\begin{proof}[{\bf Proof of Theorem \ref{thm8}(iii)}] 
		From equation \eqref{eq501}, we have
		\begin{equation}\label{eq400}
			\sum_{n=0}^{\infty} B_{3,2}(n)q^{n} = \frac{(q^3;q^3)_{\infty} (q^2;q^2)_{\infty}}{(q;q)^2_{\infty}}.
		\end{equation}
	From \eqref{eq400} and \eqref{eq602}, we obtain
		\begin{equation}\label{eq400b}
			\sum_{n=0}^{\infty} B_{3,2}(n)q^{n} = \frac{(q^3;q^3)_{\infty} (q^2;q^2)_{\infty}}{(q;q)^2_{\infty}} \equiv  (q;q)_{\infty} (q^2;q^2)_{\infty} \pmod 3 .
		\end{equation}
		Thus, we have
		\begin{equation}\label{eq401}
			\sum_{n=0}^{\infty} B_{3,2}(n)q^{8n+1} \equiv q (q^8;q^8)_{\infty} (q^{16};q^{16})_{\infty}  \equiv \eta(8z) \eta(16z)\pmod  3 .
		\end{equation}
		
		By using Theorem \ref{thm2.3}, we obtain $\eta(8z) \eta(16z) \in S_1(\Gamma_{0}(128), \left(\frac{-128}{\bullet}\right)_{L} ). $ Note that $\eta(8z) \eta(16z) $ has a Fourier expansion i.e.
		\begin{equation}\label{eq402}
			\eta(8z) \eta(16z)= q-q^9 -2 q^{17}- \cdots= \sum_{n=1}^{\infty}  d(n) q^n 
		\end{equation}
		Thus, $d(n)=0$ if $n \not \equiv 1 \pmod 8,$ for all $n \geq 0.$ From \eqref{eq401} and \eqref{eq402}, comparing the coefficient of $q^{8n+1}$, we get
		\begin{equation}\label{eq403}
			B_{3,2}(n) \equiv d(8n+1) \pmod 3.
		\end{equation}
		Since $ \eta(8z) \eta(16z)$ is a Hecke eigenform (see \cite{Martin1996}), it gives
		$$\eta(8z) \eta(16z)|T_p= \sum_{n=1}^{\infty} \left(d(pn)+  \left(\frac{-128}{p}\right)_{L} d\left(\frac{n}{p}\right)\right) q^n = \lambda(p)\sum_{n=1}^{\infty} d(n)q^n.$$ Note that $ \left(\frac{-128}{p}\right)_{L} = \left(\frac{-2}{p}\right)_{L}.$ Comparing the coefficients of $q^n$ on both sides of the above equation, we get
		\begin{equation}\label{eq404}
			d(pn)+ \left(\frac{-2}{p}\right)_{L}  d\left(\frac{n}{p}\right) = \lambda(p) d(n).
		\end{equation}
		Since $d(1)=1$ and $d(\frac{1}{p})=0,$ if we put $n=1$ in the above expression, we get $d(p)=\lambda(p).$ As $d(p)=0$ for all $p \not \equiv 1 \pmod 8$ this implies that $\lambda(p)=0$ for all $p \not \equiv 1 \pmod 8.$ From \eqref{eq404} we get that for all $p \not \equiv 1 \pmod 8$  
		\begin{equation}\label{eq405}
			d(pn)+ \left(\frac{-2}{p}\right)_{L}  d\left(\frac{n}{p}\right) =0.
		\end{equation}
		Now, we consider two cases here. If $p \not| n,$ then 
		replacing $n$ by $pn+r$ with $\gcd(r,p)=1$ in \eqref{eq405}, we get
		\begin{equation}\label{eq406}
			d(p^2n+ pr)=0 .
		\end{equation}
		Replacing $n$ by $8n-pr+1$ in \eqref{eq406} and using \eqref{eq403}, we get 
		\begin{equation}\label{eq407}
			B_{3,2}\left(p^2n +pr \frac{(1- p^2)}{8} +  \frac{p^2-1}{8}\right) \equiv 0 \pmod3.
		\end{equation}
		Since $p  \not\equiv 1 \pmod 8,$ we have $8| (1-p^2)$ and  $\gcd\left( \frac{(1- p^2)}{8} , p\right)=1,$ when $r$ runs over a residue system excluding the multiples of $p$, so does $ r \frac{(1- p^2)}{8}.$
		Thus for $p \nmid j,$ \eqref{eq407} can be written as
		\begin{equation}\label{eq408}
			B_{3,2}\left(p^2n+pj+ \frac{p^2-1}{8}\right) \equiv 0 \pmod3.
		\end{equation}
		Now we consider the second case, when $p |n.$ Here replacing $n$ by $pn$ in \eqref{eq405}, we get
		\begin{equation}\label{eq409}
			d(p^2n) = -\left(\frac{-2}{p}\right)_{L} \cdot d\left(n\right).
		\end{equation}
		Further substituting $n$ by $8n+1$ in \eqref{eq409} we obtain
		\begin{equation}\label{eq410}
			d(8p^2n+ p^2) = -\left(\frac{-2}{p}\right)_{L}  d\left(8n+1\right).
		\end{equation}
		Using \eqref{eq403} in \eqref{eq410}, we get
		\begin{equation}\label{eq411}
			B_{3,2}\left(p^2n+ \frac{p^2-1}{8} \right) = - \left(\frac{-2}{p}\right)_{L} B_{3,2} \left(n\right).
		\end{equation}
		Let $p_i$ be primes such that $p_i  \not \equiv 1 \pmod 8.$ Further note that
		$$ p_1^2 p_2^2 \cdots p_k^2 n + \frac{p_1^2 p_2^2 \cdots p_k^2-1}{8}= p_1^2 \left(p_2^2 \cdots p_k^2 n + \frac{ p_2^2 \cdots p_k^2 -1}{8} \right)+ \frac{p_1^2-1}{8} .$$
		Repeatedly using \eqref{eq411} and \eqref{eq408}, we get
		\begin{align*}
			&	B_{3,2} \left( p_1^2 p_2^2 \cdots p_k^2 p_{k+1}^2 n + \frac{p_1^2 p_2^2 \cdots p_k^2 p_{k+1}\left(8j+ p_{k+1}\right)-1}{8} \right) \\
			& \equiv -\left(\frac{-2}{p_1}\right)_{L}  B_{3,2} \left(p_2^2 \cdots p_k^2 p_{k+1}^2 n + \frac{ p_2^2 \cdots p_k^2 p_{k+1}\left(8j+ p_{k+1}\right) -1}{8} \right) \equiv \cdots\\
			& \equiv (-1)^k \left(\frac{-2}{p_1}\right)_{L} \left(\frac{-2}{p_2}\right)_{L} \cdots \left(\frac{-2}{p_k}\right)_{L}  B_{3,2} \left(p_{k+1}^2 n + p_{k+1} j + \frac{p^2_{k+1}-1}{8} \right) \equiv 0 \pmod3
		\end{align*}
		when $j \not \equiv 0 \pmod{ p_{k+1}}.$ This completes the proof of the theorem.
	\end{proof}
	
\begin{proof}[{\bf Proof of Theorem \ref{thmNewmannapplication1}(iii)}]
	From \eqref{eq800} and \eqref{eq602}, we get
		\begin{equation}\label{eq2800b*}
			\sum_{n=0}^{\infty} B_{3,2}(n)q^{n} \equiv  \frac{(q^3;q^3)_{\infty} (q^{2};q^{2})_{\infty}}{(q;q)^2_{\infty}} \equiv (q;q)_{\infty} (q^{2};q^{2})_{\infty} \pmod 3 .
		\end{equation}
		Define 
		\begin{equation}\label{eq250}
			\sum_{n=0}^{\infty} u(n) q^{n}:= f_1f_2= (q;q)_{\infty} (q^{2};q^{2})_{\infty}.
		\end{equation}
		
		In 1959, Newman \cite{Newmann1959} proved that if $p$ is a prime with $p \equiv 1 \pmod 8,$ then
		\begin{equation}\label{eq251}
			u\left(pn+ \frac{p-1}{8}\right) = u\left(\frac{p-1}{8}\right)u(n)-  \left( \frac{2}{p} \right)_{L} u\left(\frac{n}{p}- \frac{p-1}{8p}\right) .
		\end{equation}
		Therefore, if $p \nmid (8n+1), $ then $\left(\frac{n}{p}- \frac{p-1}{8p}\right)$ is not an integer and
		\begin{equation}\label{eq252}
			u\left(\frac{n}{p}- \frac{p-1}{8p}\right) =0 .
		\end{equation}
		From \eqref{eq251} and \eqref{eq252} we get that if $p \nmid (8n+1), $ then
		\begin{equation}\label{eq253}
			u\left(pn+ \frac{p-1}{8}\right) = u\left(\frac{p-1}{8}\right)u(n) .
		\end{equation}
		Thus, if $p \nmid (8n+1) $ and $u\left(\frac{p-1}{8}\right) \equiv 0 \pmod 3,$ then for $n \geq 0,$
		\begin{equation}\label{eq254}
			u\left(pn+ \frac{p-1}{8}\right) \equiv 0 \pmod 3 .
		\end{equation}
		Replacing $n$ by $ pn+ \frac{p-1}{8}$ in \eqref{eq251}, we get
		
		\begin{equation}\label{eq255}
			u\left(p^2n+ \frac{p^2-1}{8}\right) = u\left(\frac{p-1}{8}\right)u\left(pn+ \frac{p-1}{8}\right)- \left( \frac{2}{p} \right)_{L} u\left(n\right) .
		\end{equation}
		Observe that from \eqref{eq255} if $u\left(\frac{p-1}{8}\right) \equiv 0 \pmod 3, $ then for $n \geq 0,$ we get
		\begin{equation}\label{eq256}
			u\left(p^2n+ \frac{p^2-1}{8}\right) \equiv -  \left( \frac{2}{p} \right)_{L} u\left(n\right) \pmod 3 .
		\end{equation}
		From \eqref{eq256} and by using mathematical induction, we get that if $u\left(\frac{p-1}{8}\right) \equiv 0 \pmod 3, $ then for $n \geq 0, $ and $\alpha \geq 0, $ we get 
		\begin{equation}\label{eq257}
			u\left(p^{2 \alpha}n+ \frac{p^{2 \alpha}-1}{8}\right) \equiv \left(  \left( \frac{2}{p} \right)_{L} \right)^{\alpha} u\left(n\right) \pmod 3 .
		\end{equation}
		Replacing $n$ by $pn+ \frac{p-1}{8}$ in \eqref{eq157} and using \eqref{eq254}, we obtain that if $p \nmid (8n+1)$ and $u\left(\frac{p-1}{8}\right) \equiv 0 \pmod 3,$ then for $n \geq 0$ and $\alpha \geq 0,$ we get
		\begin{equation}\label{eq258}
			u\left(p^{2 \alpha +1}n+ \frac{p^{2 \alpha+1}-1}{8}\right) \equiv 0 \pmod 3 .
		\end{equation}
		From \eqref{eq250} and \eqref{eq2800b*}, we obtain that for $n \geq 0,$
		\begin{equation}\label{eq259}
			B_{3,2}(n) \equiv u(n) \pmod 3 .
		\end{equation}
		Thus, combining \eqref{eq259} and \eqref{eq258}, we obtain the result of Theorem. 
			\end{proof}
	
%%%%%%%%%%%%%%%%%%%%%%%%%%%%%%%%%%%%%%%%%%%%%%%%%%%%%%%%%%%%%%%%%%%%%%%%%%%%%%%%
%	\section{Proof of Theorem \ref{thm15}}
%%%%%%%%%%%%%%%%%%%%%%%%%%%%%%%%%%%%%%%%%%%%%%%%%%%%%%%%%%%%%%%%%%%%%%%%%%%%%
	
	\begin{proof}[{\bf Proof of Theorem \ref{thm9}(iii)}] From \eqref{eq405}, we get that for any prime $p \equiv t \pmod 8$
		\begin{equation}\label{eq412}
			d(pn)= -\left(\frac{-2}{p}\right)_{L} \cdot d\left(\frac{n}{p}\right).
		\end{equation}  Replacing $n$ by $8n+t,$ we obtain
		\begin{equation}\label{eq413}
			d(8pn+tp)= -\left(\frac{-2}{p}\right)_{L} d\left(\frac{8n+t}{p}\right).
		\end{equation}
		Next substituting $n$ by $p^kn+r$ with $p \nmid r$ in \eqref{eq413}, we obtain
		\begin{equation}\label{eq414}
			d\left(8 \left(p^{k+1}n+pr+ \frac{tp-1}{8} \right)+1\right)= -\left(\frac{-2}{p}\right)_{L}  d \left(8\left(p^{k-1}n+ \frac{8r+t-p}{8p}\right)+1\right).
		\end{equation}
		Note that $\frac{tp-1}{8} $ and $\frac{8r+t-p}{8p}$ are integers. Using \eqref{eq414} and \eqref{eq403}, we get
		\begin{equation}\label{eq415}
			B_{3,2}\left(p^{k+1}n+pr+ \frac{tp-1}{8}\right)\equiv -\left(\frac{-2}{p}\right)_{L}   B_{3,2}\left(p^{k-1}n+ \frac{8r+t-p}{8p} \right) \pmod3.
		\end{equation}
	\end{proof}
	
	\begin{proof}[{\bf Proof of Corollary \ref{coro4}(iii)}]Let $p$ be a prime such that $p \equiv t \pmod 8.$ Choose a non negative integer $r$ such that $8r+t=p^{2k-1}.$ Substituting $ k$ by $2k-1$ in \eqref{eq415}, we obtain
		\begin{align*}
			B_{3,2}\left(p^{2k}n+ \frac{p^{2k}-1}{8} \right)&\equiv  -\left(\frac{2}{p}\right)_{L}  B_{3,2}\left(p^{2k-2}n+ \frac{p^{2k-2}-1}{4} \right)\\
			& \equiv \cdots \equiv \left(-1\right)^k \left(\frac{2}{p}\right)_{L}^k  B_{3,2}(n) \pmod3.
		\end{align*}
	\end{proof}
	\section{Congruences for $B_{7,2}(n)$}
	%%%%%%%%%%%%%%%%% 
	\begin{proof}[{\bf Proof of Theorem \ref{newmannthm4}(i)}] 
		From \eqref{eq501} and \eqref{eq602}, we get
		\begin{equation}\label{eq3800b*}
			\sum_{n=0}^{\infty} B_{7,2}(n)q^{n} \equiv  \frac{(q^7;q^7)_{\infty} (q^{2};q^{2})_{\infty}}{(q;q)^2_{\infty}} \equiv (q;q)^5_{\infty} (q^{2};q^{2})_{\infty} \pmod 7 .
		\end{equation}
		Define 
		\begin{equation}\label{eq350}
			\sum_{n=0}^{\infty} c(n) q^{n}:= f_1^5f_2= (q;q)^5_{\infty} (q^{2};q^{2})_{\infty}.
		\end{equation}
			In 1959, Newman \cite{Newmann1959} proved that if $p$ is a prime with $p \equiv 1 \pmod{24},$ then
		\begin{equation}\label{eq351}
			c\left(pn+ \frac{7(p-1)}{24}\right) = c\left(\frac{7(p-1)}{24}\right)c(n)-  p^{2} \cdot  c\left(\frac{n}{p}- \frac{7(p-1)}{24p}\right) .
		\end{equation}
		Therefore, if $p \nmid (24n+7), $ then $\left(\frac{n}{p}- \frac{7(p-1)}{24p}\right)$ is not an integer and
		\begin{equation}\label{eq352}
			c\left(\frac{n}{p}- \frac{7(p-1)}{24p}\right)=0 .
		\end{equation}
		From \eqref{eq351} and \eqref{eq352} we get that if $p \nmid (24n+7), $ then
		\begin{equation}\label{eq353}
			c\left(pn+ \frac{7(p-1)}{24}\right) = c\left(\frac{7(p-1)}{24}\right)c(n).
		\end{equation}
		Thus, if $p \nmid (24n+7) $ and $c\left(\frac{7(p-1)}{24}\right) \equiv 0 \pmod 7,$ then for $n \geq 0,$
		\begin{equation}\label{eq354}
			c\left(pn+ \frac{7(p-1)}{24}\right) \equiv 0 \pmod 7 .
		\end{equation}
		Replacing $n$ by $ pn+ \frac{7(p-1)}{24}$ in \eqref{eq351}, we get
		
		\begin{equation}\label{eq355}
			c\left(p^2n+ \frac{7(p^2-1)}{24}\right) = c\left(\frac{7(p-1)}{24}\right)c\left(pn+ \frac{7(p-1)}{24}\right)- p^2 \cdot  c\left(n\right) .
		\end{equation}
		Observe that from \eqref{eq355} if $c\left(\frac{7(p-1)}{24}\right) \equiv 0 \pmod 7, $ then for $n \geq 0,$ we get
		\begin{equation}\label{eq356}
			c\left(p^2n+ \frac{7(p^2-1)}{24}\right) \equiv -  p^2\cdot  c\left(n\right) \pmod 7 .
		\end{equation}
		From \eqref{eq356} and by using mathematical induction, we get that if $c\left(\frac{7(p-1)}{24}\right) \equiv 0 \pmod 7,$ then for $n \geq 0, $ and $\alpha \geq 0, $ we get 
		\begin{equation}\label{eq357}
			c\left(p^{2 \alpha}n+ \frac{7(p^{2 \alpha}-1)}{24}\right) \equiv \left( - p^2 \cdot  \right)^{\alpha} c\left(n\right) \pmod 7 .
		\end{equation}
		Replacing $n$ by $pn+ \frac{7(p-1)}{24}$ in \eqref{eq357} and using \eqref{eq354}, we obtain that if $p \nmid (24n+7)$ and $c\left(\frac{7(p-1)}{24}\right) \equiv 0 \pmod 7,$ then for $n \geq 0$ and $\alpha \geq 0,$ we get
		\begin{equation}\label{eq358}
			c\left(p^{2 \alpha +1}n+ \frac{7(p^{2 \alpha+1}-1)}{24}\right) \equiv 0 \pmod 7.
		\end{equation}
		From \eqref{eq350} and \eqref{eq3800b*}, we obtain that for $n \geq 0,$
		\begin{equation}\label{eq359}
			B_{7,2}(n) \equiv c(n) \pmod 7 .
		\end{equation}
		Thus, combining \eqref{eq359} and \eqref{eq358}, we obtain the result of Theorem. 
	\end{proof}
	
	\section{Congruences for $B_{11,2}(n)$}
	%%%%%%%%%%%%%%%%% 
	\begin{proof}[{\bf Proof of Theorem \ref{newmannthm4}(ii)}]
		From \eqref{eq501} and \eqref{eq602}, we get
		\begin{equation}\label{eq4800b*}
			\sum_{n=0}^{\infty} B_{11,2}(n)q^{n} =  \frac{(q^{11};q^{11})_{\infty} (q^{2};q^{2})_{\infty}}{(q;q)^2_{\infty}} \equiv (q;q)^9_{\infty} (q^{2};q^{2})_{\infty} \pmod {11} .
		\end{equation}
		Define 
		\begin{equation}\label{eq450}
			f_1^9f_2= (q;q)^9_{\infty} (q^{2};q^{2})_{\infty} :=\sum_{n=0}^{\infty} f(n) q^{n}.
		\end{equation}
		
		In 1959, Newman \cite{Newmann1959} proved that if $p$ is a prime with $p \equiv 1 \pmod{24},$ then
		\begin{equation}\label{eq451}
			f\left(pn+ \frac{11(p-1)}{24}\right) = f\left(\frac{11(p-1)}{24}\right)f(n)-  p^{4} \cdot  f\left(\frac{n}{p}- \frac{11(p-1)}{24p}\right) .
		\end{equation}
		
		Therefore, if $p \nmid (24n+1), $ then $\left(\frac{n}{p}- \frac{11(p-1)}{24p}\right)$ is not an integer and
		\begin{equation}\label{eq452}
			f\left(\frac{n}{p}- \frac{11(p-1)}{24p}\right)=0 .
		\end{equation}
		From \eqref{eq451} and \eqref{eq452} we get that if $p \nmid (24n+11), $ then
		\begin{equation}\label{eq453}
			f\left(pn+ \frac{11(p-1)}{24}\right) = f\left(\frac{11(p-1)}{24}\right)f(n).
		\end{equation}
		Thus, if $p \nmid (24n+11) $ and $f\left(\frac{11(p-1)}{24}\right) \equiv 0 \pmod{11},$ then for $n \geq 0,$
		\begin{equation}\label{eq454}
			f\left(pn+ \frac{11(p-1)}{24}\right) \equiv 0 \pmod{11} .
		\end{equation}
		Replacing $n$ by $ pn+ \frac{11(p-1)}{24}$ in \eqref{eq451}, we get
		
		\begin{equation}\label{eq455}
			f\left(p^2n+ \frac{11(p^2-1)}{24}\right) = f\left(\frac{11(p-1)}{24}\right)f\left(pn+ \frac{11(p-1)}{24}\right)- p^4 \cdot  f\left(n\right) .
		\end{equation}
		Observe that from \eqref{eq455} if $f\left(\frac{11(p-1)}{24}\right) \equiv 0 \pmod{11}, $ then for $n \geq 0,$ we get
		\begin{equation}\label{eq456}
			f\left(p^2n+ \frac{11(p^2-1)}{24}\right) \equiv -  p^4\cdot  f\left(n\right) \pmod{11} .
		\end{equation}
		From \eqref{eq456} and by using mathematical induction, we get that if $f\left(\frac{11(p-1)}{24}\right) \equiv 0 \pmod{11},$ then for $n \geq 0, $ and $\alpha \geq 0, $ we get 
		\begin{equation}\label{eq457}
			f\left(p^{2 \alpha}n+ \frac{11(p^{2 \alpha}-1)}{24}\right) \equiv \left( - p^4   \right)^{\alpha} \cdot f\left(n\right) \pmod{11} .
		\end{equation}
		Replacing $n$ by $pn+ \frac{11(p-1)}{24}$ in \eqref{eq457} and using \eqref{eq454}, we obtain that if $p \nmid (24n+11)$ and $f\left(\frac{11(p-1)}{24}\right) \equiv 0 \pmod{11},$ then for $n \geq 0$ and $\alpha \geq 0,$ we get
		\begin{equation}\label{eq458}
			f\left(p^{2 \alpha +1}n+ \frac{11(p^{2 \alpha+1}-1)}{24}\right) \equiv  \pmod {11}.
		\end{equation}
		From \eqref{eq450} and \eqref{eq4800b*}, we obtain that for $n \geq 0,$
		\begin{equation}\label{eq459}
			B_{11,2}(n) \equiv f(n) \pmod{11} .
		\end{equation}
		Thus, combining \eqref{eq459} and \eqref{eq458}, we obtain the result of Theorem.
	\end{proof}
	\section{Congruences for $B_{13,2}(n)$}
	%%%%%%%%%%%%%%%%% 
	\begin{proof}[{\bf Proof of Theorem \ref{newmannthm4}(iii)}]
		From \eqref{eq501} and \eqref{eq602}, we get
		\begin{equation}\label{eq5800b*}
			\sum_{n=0}^{\infty} B_{13,2}(n)q^{n} =  \frac{(q^{13};q^{13})_{\infty} (q^{2};q^{2})_{\infty}}{(q;q)^2_{\infty}} \equiv (q;q)^{11}_{\infty} (q^{2};q^{2})_{\infty} \pmod {13} .
		\end{equation}
		Define 
		\begin{equation}\label{eq550}
			f_1^{11}f_2= (q;q)^{11}_{\infty} (q^{2};q^{2})_{\infty} :=\sum_{n=0}^{\infty} g(n) q^{n}.
		\end{equation}
		
		In 1959, Newman \cite{Newmann1959} proved that if $p$ is a prime with $p \equiv 1 \pmod{24},$ then
		\begin{equation}\label{eq551}
			g\left(pn+ \frac{13(p-1)}{24}\right) = g\left(\frac{13(p-1)}{24}\right)g(n)-  p^{5} \cdot  g\left(\frac{n}{p}- \frac{13(p-1)}{24p}\right) .
		\end{equation}
		
		Therefore, if $p \nmid (24n+13), $ then $\left(\frac{n}{p}- \frac{13(p-1)}{24p}\right)$ is not an integer and
		\begin{equation}\label{eq552}
			g\left(\frac{n}{p}- \frac{13(p-1)}{24p}\right)=0 .
		\end{equation}
		
		From \eqref{eq551} and \eqref{eq552} we get that if $p \nmid (24n+13), $ then
		\begin{equation}\label{eq553}
			g\left(pn+ \frac{13(p-1)}{24}\right) = g\left(\frac{13(p-1)}{24}\right)g(n).
		\end{equation}
		Thus, if $p \nmid (24n+13) $ and $g\left(\frac{13(p-1)}{24}\right) \equiv 0 \pmod{13},$ then for $n \geq 0,$
		\begin{equation}\label{eq554}
			g\left(pn+ \frac{13(p-1)}{24}\right) \equiv 0 \pmod{13} .
		\end{equation}
		
		Replacing $n$ by $ pn+ \frac{13(p-1)}{24}$ in \eqref{eq551}, we get
		
		\begin{equation}\label{eq555}
			g\left(p^2n+ \frac{13(p^2-1)}{24}\right) = g\left(\frac{13(p-1)}{24}\right)g\left(pn+ \frac{13(p-1)}{24}\right)- p^5 \cdot  g\left(n\right) .
		\end{equation}
		Observe that from \eqref{eq555} if $g\left(\frac{11(p-1)}{24}\right) \equiv 0 \pmod{13}, $ then for $n \geq 0,$ we get
		\begin{equation}\label{eq556}
			g\left(p^2n+ \frac{13(p^2-1)}{24}\right) \equiv -  p^5\cdot  g\left(n\right) \pmod{13} .
		\end{equation}
		From \eqref{eq556} and by using mathematical induction, we get that if $g\left(\frac{13(p-1)}{24}\right) \equiv 0 \pmod{13},$ then for $n \geq 0, $ and $\alpha \geq 0, $ we get 
		\begin{equation}\label{eq557}
			g\left(p^{2 \alpha}n+ \frac{11(p^{2 \alpha}-1)}{24}\right) \equiv \left( - p^5 \cdot  \right)^{\alpha} g\left(n\right) \pmod{13} .
		\end{equation}
		Replacing $n$ by $pn+ \frac{13(p-1)}{24}$ in \eqref{eq557} and using \eqref{eq554}, we obtain that if $p \nmid (24n+13)$ and $g\left(\frac{13(p-1)}{24}\right) \equiv 0 \pmod{13},$ then for $n \geq 0$ and $\alpha \geq 0,$ we get
		\begin{equation}\label{eq558}
			g\left(p^{2 \alpha +1}n+ \frac{13(p^{2 \alpha+1}-1)}{24}\right) \equiv  \pmod {13}.
		\end{equation}
		From \eqref{eq550} and \eqref{eq4800b*}, we obtain that for $n \geq 0,$
		\begin{equation}\label{eq559}
			B_{13,2}(n) \equiv e(n) \pmod{13} .
		\end{equation}
		Thus, combining \eqref{eq559} and \eqref{eq558}, we obtain the result of Theorem. 
	\end{proof}
	
	\section{Application of Newmann identity II}

	We are going to state a result of Newman [ \cite{Newmann1962}, Theorem $3$ ]
	which play an important role in the proof of our Theorems. We state his result as a lemma. We need the following notations to state the lemma.
	Let $r,$ $s$ be integers such that $r,s \neq 0,$ $r \not \equiv s \pmod2.$ Set
	\begin{equation*}
		\phi(\tau)= \prod_{n=1}^{\infty} \left( 1- x^n\right)^r \left( 1-x^{nq}\right)^s = \sum_{n=0}^{\infty} a(n) x^n
	\end{equation*}
	$ \epsilon = \frac{r+s}{2}, t = \frac{r+sq}{24}, \Delta = t \left(p^2-1\right)= \frac{(r+sq) (p^2-1)}{24} , \theta = (-1)^{\frac{1}{2}- \epsilon} 2 q^s.$ 
	\begin{lemma}\label{lem100}
		With the notations defined as above, the coefficients $a(n)$ of $ \phi(\tau) $ satisfy
		\begin{equation}\label{eq1000}
			a\left(np^2+ \Delta \right)= \gamma_{0}(n)a(n)- p^{2 \epsilon -2} a \left(\frac{n- \Delta}{p^2}\right),
		\end{equation}
		where 
		\begin{equation}\label{eq1000a}
			\gamma_{0}(n)= p^{2 \epsilon -2}c - \left( \frac{\theta }{p}  \right)_{L} p^{ \epsilon -3/2} \left( \frac{n- \Delta }{p}  \right)_{L},
		\end{equation}
		where $c$ is a constant.
	\end{lemma}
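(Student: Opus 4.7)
My plan is to realize $\phi(\tau)$ as a Hecke eigenform of half-integral weight, so that the recurrence \eqref{eq1000} falls out as the coefficient incarnation of the eigenform equation.

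First, I would set $F(\tau) := x^{t}\,\phi(\tau) = \eta(\tau)^{r}\,\eta(q\tau)^{s}$, so that $F(\tau) = \sum_{n \geq 0} a(n)\, x^{n+t}$. Because $r \not\equiv s \pmod{2}$, the weight $\epsilon = (r+s)/2$ is a genuine half-integer. Applying Theorem~\ref{thm2.3} (to get the transformation law and Nebentypus $\chi$) and Theorem~\ref{thm2.4} (to verify holomorphy at every cusp of $\Gamma_0(N)$ for a suitable level $N$ dividing $24q$), one shows that $F \in M_{\epsilon}\bigl(\Gamma_0(N), \chi\bigr)$.

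Next, I would apply the Shimura-style Hecke operator $T_{p^{2}}$ for half-integer weight forms. Writing $\epsilon = \lambda + \tfrac{1}{2}$, the standard formula reads
\begin{equation*}
g \mid T_{p^{2}} \;=\; \sum_{m} \left( b(p^{2} m) + \chi^{*}(p)\left(\tfrac{(-1)^{\lambda} m}{p}\right)_{L} p^{\lambda-1} b(m) + \chi(p)^{2} p^{2\lambda-1} b(m/p^{2}) \right) x^{m},
\end{equation*}
where $g = \sum b(m) x^{m}$ and $\chi^{*}$ is the quadratic twist of $\chi$ determined by $\lambda$. Substituting $g = F$, reindexing via $m = n + t$, and using the identities $mp^{2} - t = np^{2} + \Delta$ and $m \equiv n - \Delta \pmod{p}$ rewrites the Hecke action precisely into the format of \eqref{eq1000}: one has $p^{\lambda-1} = p^{\epsilon - 3/2}$ and $p^{2\lambda-1} = p^{2\epsilon-2}$, and the sign bundled into the paper's choice $\theta = (-1)^{1/2 - \epsilon}\, 2q^{s}$ combines with $\chi^{*}(p)$ to yield exactly the Legendre factor $\left(\tfrac{\theta}{p}\right)_{L}\left(\tfrac{n-\Delta}{p}\right)_{L}$ appearing in \eqref{eq1000a}.

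The final step, which I expect to be the main obstacle, is the eigenform claim: one must show $F \mid T_{p^{2}} = c\, p^{2\epsilon - 2}\, F$ for a suitable scalar $c$, which would then give \eqref{eq1000a} with precisely that $c$. This is delicate because $M_{\epsilon}(\Gamma_0(N), \chi)$ is generally not one-dimensional, so Hecke-eigenform-ness cannot be deduced for free. Newman's original strategy bypasses the full Shimura theory by a direct manipulation of $\phi(\tau)$ using the Jacobi triple product identity and explicit theta-series transformation formulas, effectively computing the action of $T_{p^{2}}$ on this \emph{specific} eta product by hand. A cleaner modern alternative is to identify $F$ with a linear combination of CM theta series attached to the binary quadratic form associated with the pair $(r, qs)$, since such theta series are automatically Hecke eigenforms. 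Either route supplies the missing eigenfunction property and closes the argument.
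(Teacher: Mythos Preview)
The paper does not prove Lemma~\ref{lem100}: it is quoted from Newman's 1962 paper \cite{Newmann1962} (Theorem~3 there) and used as a black box in Sections~11--13. So there is no in-paper proof to compare your proposal against.

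On your proposal itself, the half-integral-weight Hecke framework is indeed the modern way to understand Newman's identity, and your reindexing $m=n+t$ together with $p^{\lambda-1}=p^{\epsilon-3/2}$, $p^{2\lambda-1}=p^{2\epsilon-2}$ is exactly right. Two points, though. First, Theorems~\ref{thm2.3} and~\ref{thm2.4} as stated in this paper require the weight $k=\tfrac12\sum r_\delta$ to be an \emph{integer}, so you cannot invoke them to place $F$ in a half-integral-weight space; you would need the metaplectic analogues (or simply the classical transformation law of $\eta$) instead. Second, and more seriously, you correctly flag the eigenform step as ``the main obstacle'' but do not actually close it: neither the Jacobi-triple-product computation nor the CM-theta identification is carried out, and the latter is not available in general (not every $\eta(\tau)^r\eta(q\tau)^s$ with $r\not\equiv s\pmod2$ is a single CM theta series). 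Newman's own argument, which predates Shimura's 1973 theory, proceeds by writing down explicitly the action of the relevant double-coset operator on this specific eta product via the $\eta$-multiplier system and verifying the eigen-equation by hand; that is the substance you would need to supply to turn your outline into a proof.
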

	%%%%%%%%%%%%%%%%%%%%%%%%%%%%%%%%%%%%%%%%%%%%%%%%%%%%%%%%%%%%%%%%%%%%%%%%%%%
	%%%%%%%%%%%Newman Identity II
	\section{Congruences for $B_{4,3}(n)$}
	%%%%%%%%%%%%%%%%%%%%%%%%%%%%%%%%%%%%%%%%%%%%%%%%%%%%%%%%%%%%%%%%%%%%%%%%%%%%%%%%
	\begin{proof}[{\bf Proof of Theorem \ref{thmnewman10}}]
		Note that 
		\begin{equation}\label{eq1006}
			\sum_{n=0}^{\infty} B_{4,3}(n)q^n=  \frac{(q^4;q^4)_{\infty} (q^3;q^3)_{\infty}}{(q;q)^2_{\infty}} \equiv (q;q)^2_{\infty} (q^3;q^3)_{\infty} \pmod2.
		\end{equation} 
		Let $a_1(n)$ be defined by \eqref{eq1001}. Substituting $q=3, r=2, s=1$ in Lemma \ref{lem100}, we obtain
		\begin{equation}\label{eq1007}
			a_1\left( p^2n+ \frac{5(p^2-1)}{24}\right)= \gamma_{1}(n) a_1(n)- p \cdot a_1\left( \frac{n- \frac{5(p^2-1)}{24}}{p^2}\right),
		\end{equation}
		where 
		\begin{equation}\label{eq1008}
			\gamma_{1}(n)= p \cdot c - \left( \frac{-6}{p} \right)_{L}  \left( \frac{n- \frac{5(p^2-1)}{24}}{p} \right)_{L}.
		\end{equation}
		If we put $n=0$ in \eqref{eq1007} and using the facts that $a_1(0)=1$ and $a_1\left( \frac{- \frac{5(p^2-1)}{24}}{p^2}\right) =0, $ we get
		\begin{equation}\label{eq1009}
			\gamma_{1}(0)= a_1\left( \frac{5(p^2-1)}{24}\right).
		\end{equation}
		Further taking $n=0$ in \eqref{eq1008} and using \eqref{eq1009}, we obtain
		\begin{equation}\label{eq1010}
			pc= w_1(p),
		\end{equation}
		where $ w_1(p) :=  a_1\left( \frac{5(p^2-1)}{24}\right) + \left( \frac{-6}{p}\right)_{L} \left( \frac{-5(p^2-1)}{24 p}\right)_{L}.$
		Combining \eqref{eq1010}, \eqref{eq1007} and \eqref{eq1008} together, we get
		\begin{equation}\label{eq1011}
			a_1\left( p^2n+ \frac{5(p^2-1)}{24} \right)= \left[ w_1(p) - \left( \frac{-6}{p} \right)_{L}  \left( \frac{n- \frac{5(p^2-1)}{24}}{p} \right)_{L} \right] a_1(n)- p \cdot a_1\left( \frac{n- \frac{5(p^2-1)}{24}}{p^2}\right).
		\end{equation}
		Replacing $n$ by $ pn+ \frac{5(p^2-1)}{24} $ in \eqref{eq1011}, we obtain
		\begin{equation}\label{eq1012}
			a_1\left( p^3n+ \frac{5(p^4-1)}{24} \right)=  w_1(p) a_1\left( pn+ \frac{5(p^2-1)}{24} \right)- p \cdot a_1\left( \frac{n}{p}\right).
		\end{equation}
		If $ w_1(p) \equiv 0 \pmod2,$ then 
		\begin{equation}\label{eq1013}
			a_1\left( p^3n+ \frac{5(p^4-1)}{24} \right) \equiv - p \cdot a_1\left( \frac{n}{p}\right) \pmod2.
		\end{equation}
		Replacing $n$ by $pn$ we have,
		\begin{equation}\label{eq1014}
			a_1\left( p^4n+ \frac{5(p^4-1)}{24} \right) \equiv - p \cdot a_1\left(n \right) \pmod2.
		\end{equation}
		Using mathematical induction in \eqref{eq1014}, we obtain for $n,k \geq 0,$
		\begin{equation}\label{eq1015}
			a_1\left( p^{4k}n+ \frac{5(p^{4k}-1)}{24} \right) \equiv (-p)^k \cdot a_1\left(n \right) \pmod2.
		\end{equation}
		Congruence \eqref{eq1013} implies that If $ w_1(p) \equiv 0 \pmod2$ and $ p \nmid n,$ then
		\begin{equation}\label{eq1016}
			a_1\left( p^3n+ \frac{5(p^4-1)}{24} \right) \equiv 0 \pmod2.
		\end{equation}
		Replacing $n$ by $ p^3n+ \frac{5(p^4-1)}{24}$ in \eqref{eq1015} and using \eqref{eq1016}, we get that if $ w_1(p) \equiv 0 \pmod2$ and $ p \nmid n,$ then
		\begin{equation}\label{eq1017}
			a_1\left( p^{4k+3}n+ \frac{5(p^{4k+4}-1)}{24} \right) \equiv 0 \pmod2.
		\end{equation}
		From \eqref{eq1001}, \eqref{eq1006} and \eqref{eq1017} we obtain that, if $ w_1(p) \equiv 0 \pmod2$ and $ p \nmid n,$ then
		\begin{equation}\label{eq1017a}
			B_{4,3}\left( p^{4k+3}n+ \frac{5(p^{4k+4}-1)}{24} \right) \equiv 0 \pmod2.
		\end{equation}
		Further, replacing $n$ by  $ p^2n+ \frac{5p(p^2-1)}{24}$ in \eqref{eq1012}, we get
		\begin{equation}\label{eq1018}
			a_1\left( p^5n+ \frac{5(p^6-1)}{24} \right)=  w_1(p) a_1\left( p^3n+ \frac{5(p^4-1)}{24} \right)- p \cdot a_1\left( pn+ \frac{5(p^2-1)}{24} \right).
		\end{equation}
		Combining \eqref{eq1018} and \eqref{eq1012} together, we get
		\begin{equation}\label{eq1019}
			a_1\left( p^5n+ \frac{5(p^6-1)}{24} \right)=  \left( w_1^2(p) -p  \right)  a_1\left( pn+ \frac{5(p^2-1)}{24} \right)-  w_1(p) \cdot p \cdot  a_1\left( \frac{n}{p} \right).
		\end{equation}
		Note that if $ w_1(p) \not \equiv 0 \pmod2,$ then  $ w_1(p) \equiv 1 \pmod2$ and hence $ \left( w_1^2(p) -p  \right) \equiv 0 \pmod 2.$ Thus, if $ w_1(p) \not \equiv 0 \pmod2,$ then \eqref{eq1019} will be
		
		\begin{equation}\label{eq1020}
			a_1\left( p^5n+ \frac{5(p^6-1)}{24} \right) \equiv -  w_1(p) \cdot p \cdot  a_1\left( \frac{n}{p} \right) \pmod2.
		\end{equation}
		Therefore, if $p \nmid n,$ then $a_1\left( \frac{n}{p} \right) =0$ and 
		\begin{equation}\label{eq1021}
			a_1\left( p^5n+ \frac{5(p^6-1)}{24} \right) \equiv 0 \pmod2.
		\end{equation}
		Replacing $n$ by $pn$ in \eqref{eq1020}, we obtain
		\begin{equation}\label{eq1022}
			a_1\left( p^6n+ \frac{5(p^6-1)}{24} \right) \equiv -  w_1(p) \cdot p \cdot  a_1\left( n \right) \pmod2.
		\end{equation}
		By using mathematical induction in \eqref{eq1022}, we get
		\begin{equation}\label{eq1023}
			a_1\left( p^{6k}n+ \frac{5(p^{6k}-1)}{24} \right) \equiv  ( - w_1(p) \cdot p)^k \cdot  a_1\left( n \right) \pmod2.
		\end{equation}
		Replacing $n$ by $ p^5n+ \frac{5(p^6-1)}{24}$ in \eqref{eq1023} and using \eqref{eq1021}, it follows that if $ w_1(p) \not \equiv 0 \pmod2$ and $p \nmid n,$ we get 
		\begin{equation}\label{eq1024}
			a_1\left( p^{6k+5}n+ \frac{5(p^{6k+6}-1)}{24} \right) \equiv  0 \pmod2.
		\end{equation}
		From \eqref{eq1024} and \eqref{eq1006} we obtain that, if $ w_1(p) \not \equiv 0 \pmod2$ and $p \nmid n,$  then
		\begin{equation}\label{eq1025}
			B_{4,3}\left( p^{6k+5}n+ \frac{5(p^{6k+6}-1)}{24} \right) \equiv  0 \pmod2.
		\end{equation}
		Note that $\left( \frac{-6}{p} \right)_{L}  \left( \frac{n- \frac{5(p^2-1)}{24}}{p} \right)_{L} = \left( \frac{-6n-1 + \frac{(p^2-1)}{4}}{p} \right)_{L}. $
		From \eqref{eq1011} it follows that if $ w_1(p) \equiv \left( \frac{-6n-1 + \frac{(p^2-1)}{4}}{p} \right)_{L} \pmod2, $ we obtain
		\begin{equation}\label{eq1026}
			a_1\left( p^2n+ \frac{5(p^2-1)}{24} \right) \equiv - p \cdot a_1\left( \frac{n- \frac{5(p^2-1)}{24}}{p^2}\right) \pmod2.
		\end{equation}
		Observe that if $ \left( \frac{-6n-1 + \frac{(p^2-1)}{4}}{p} \right)_{L}  \not \equiv 0 \pmod2,$ then $p \nmid (24n+5)$ and hence $ \left( \frac{n- \frac{5(p^2-1)}{24}}{p}\right) $ and $\left( \frac{n- \frac{5(p^2-1)}{24}}{p^2}\right) $ are not integers. Thus, for $n \geq 0,$ from \eqref{eq1026} 
		we have
		\begin{equation}\label{eq1027}
			a_1\left( p^2n+ \frac{5(p^2-1)}{24} \right) \equiv 0 \pmod2.
		\end{equation}
		Replacing $n$ by $p^2n+ \frac{5(p^2-1)}{24} $ in \eqref{eq1023} and using \eqref{eq1027}, we obtain that if  $w_1(p) \not \equiv 0 \pmod2, $ and $ w_1(p) \equiv \left( \frac{-6n-1 + \frac{(p^2-1)}{4}}{p} \right)_{L} \pmod2,$ then
		
		\begin{equation}\label{eq1028}
			a_1\left( p^{6k+2}n+ \frac{5(p^{6k+2}-1)}{24} \right) \equiv 0 \pmod2.
		\end{equation}
		Combining \eqref{eq1006} and \eqref{eq1028} together, we get
		\begin{equation}\label{eq1029}
			B_{4,3}\left( p^{6k+2}n+ \frac{5(p^{6k+2}-1)}{24} \right) \equiv 0 \pmod2.
		\end{equation}
		\end{proof} 
	
	%%%%%%%%%%%%%%%%%%%%%%%%%%%%%%%%%%%%%%%%%%%%%%%%%%%%%%%%%%%%%%%%%%%%%%%%%%%
	\section{Congruences for $B_{8,3}(n)$}
	%%%%%%%%%%%%%%%%%%%%%%%%%%%%%%%%%%%%%%%%%%%%%%%%%%%%%%%%%%%%%%%%%%%%%%%%%%%%%%%%
	
\begin{proof}[{\bf Proof of Theorem \ref{thmnewman11} }]
		Note that 
		\begin{equation}\label{eq1106}
			\sum_{n=0}^{\infty} B_{8,3}(n)q^n=  \frac{(q^8;q^8)_{\infty} (q^3;q^3)_{\infty}}{(q;q)^2_{\infty}} \equiv (q;q)^6_{\infty} (q^3;q^3)_{\infty} \pmod2.
		\end{equation} 
		Let $a_2(n)$ be defined by \eqref{eq1001}. Substituting $q=3, r=6, s=1$ in Lemma \ref{lem100}, we obtain
		\begin{equation}\label{eq1107}
			a_2\left( p^2n+ \frac{3(p^2-1)}{8}\right)= \gamma_{2}(n) a_2(n)- p^5 \cdot a_2\left( \frac{n- \frac{3(p^2-1)}{8}}{p^2}\right),
		\end{equation}
		where 
		\begin{equation}\label{eq1108}
			\gamma_{2}(n)= p^5 \cdot c - p^2 \cdot \left( \frac{-6}{p} \right)_{L}  \left( \frac{n- \frac{3(p^2-1)}{8}}{p} \right)_{L}.
		\end{equation}
		If we put $n=0$ in \eqref{eq1107} and using the facts that $a_2(0)=1$ and $a_2\left( \frac{- \frac{3(p^2-1)}{8}}{p^2}\right) =0, $ we get
		\begin{equation}\label{eq1109}
			\gamma_{2}(0)= a_2\left( \frac{3(p^2-1)}{8}\right).
		\end{equation}
		Further taking $n=0$ in \eqref{eq1108} and using \eqref{eq1109}, we obtain
		\begin{equation}\label{eq1110}
			p^5c= w_2(p),
		\end{equation}
		where $ w_2(p) :=  a_2\left( \frac{3(p^2-1)}{8}\right) + p^2 \cdot \left( \frac{-6}{p}\right)_{L} \left( \frac{-3(p^2-1)}{8 p}\right)_{L}.$
		Combining \eqref{eq1110}, \eqref{eq1107} and \eqref{eq1108} together, we get
		\begin{equation}\label{eq1111}
			a_2\left( p^2n+ \frac{3(p^2-1)}{8} \right)= \left[ w_2(p) - p^2 \cdot \left( \frac{-6}{p} \right)_{L}  \left( \frac{n- \frac{3(p^2-1)}{8}}{p} \right)_{L} \right] a_2(n)- p^5 \cdot a_2\left( \frac{n- \frac{3(p^2-1)}{8}}{p^2}\right).
		\end{equation}
		Replacing $n$ by $ pn+ \frac{3(p^2-1)}{8} $ in \eqref{eq1111}, we obtain
		\begin{equation}\label{eq1112}
			a_2\left( p^3n+ \frac{3(p^4-1)}{8} \right)=  w_2(p) a_2\left( pn+ \frac{3(p^2-1)}{8} \right)- p^5 \cdot a_2\left( \frac{n}{p}\right).
		\end{equation}
		If $ w_2(p) \equiv 0 \pmod2,$ then 
		\begin{equation}\label{eq1113}
			a_2\left( p^3n+ \frac{3(p^4-1)}{8} \right) \equiv - p^5 \cdot a_2\left( \frac{n}{p}\right) \pmod2.
		\end{equation}
		Replacing $n$ by $pn$ we have,
		\begin{equation}\label{eq1114}
			a_2\left( p^4n+ \frac{3(p^4-1)}{8} \right) \equiv - p^5 \cdot a_2\left(n \right) \pmod2.
		\end{equation}
		Using mathematical induction in \eqref{eq1114}, we obtain for $n,k \geq 0,$
		\begin{equation}\label{eq1115}
			a_2\left( p^{4k}n+ \frac{3(p^{4k}-1)}{8} \right) \equiv (-p^5)^k \cdot a_2\left(n \right) \pmod2.
		\end{equation}
		Congruence \eqref{eq1113} implies that If $ w_2(p) \equiv 0 \pmod2$ and $ p \nmid n,$ then
		\begin{equation}\label{eq1116}
			a_2\left( p^3n+ \frac{3(p^4-1)}{8} \right) \equiv 0 \pmod2.
		\end{equation}
		Replacing $n$ by $ p^3n+ \frac{3(p^4-1)}{8}$ in \eqref{eq1115} and using \eqref{eq1116}, we get that if $ w_2(p) \equiv 0 \pmod2$ and $ p \nmid n,$ then
		\begin{equation}\label{eq1117}
			a_2\left( p^{4k+3}n+ \frac{3(p^{4k+4}-1)}{8} \right) \equiv 0 \pmod2.
		\end{equation}
		From \eqref{eq1106} and \eqref{eq1117} we obtain that, if $ w_2(p) \equiv 0 \pmod2$ and $ p \nmid n,$ then
		\begin{equation}\label{eq1117a}
			B_{8,3}\left( p^{4k+3}n+ \frac{3(p^{4k+4}-1)}{8} \right) \equiv 0 \pmod2.
		\end{equation}
		
		Replacing $n$ by  $ p^2n+ \frac{3p(p^2-1)}{8}$ in \eqref{eq1112}, we get
		\begin{equation}\label{eq1118}
			a_2\left( p^5n+ \frac{3(p^6-1)}{8} \right)=  w_2(p) a_2\left( p^3n+ \frac{3(p^4-1)}{8} \right)- p^5 \cdot a_2\left( pn+ \frac{3(p^2-1)}{8} \right).
		\end{equation}
		Combining \eqref{eq1118} and \eqref{eq1112} together, we get
		\begin{equation}\label{eq1119}
			a_2\left( p^5n+ \frac{3(p^6-1)}{8} \right)=  \left( w_2^2(p) -p^5  \right)  a_2\left( pn+ \frac{3(p^2-1)}{8} \right)-  w_2(p) \cdot p^5 \cdot  a_2\left( \frac{n}{p} \right).
		\end{equation}
		Note that if $ w_2(p) \not \equiv 0 \pmod2,$ then  $ w_2(p) \equiv 1 \pmod2$ and hence $ \left( w_2^2(p) -p^5  \right) \equiv 0 \pmod 2.$ Thus, if $ w_2(p) \not \equiv 0 \pmod2,$ then \eqref{eq1119} will be
		
		\begin{equation}\label{eq1120}
			a_2\left( p^5n+ \frac{3(p^6-1)}{8} \right) \equiv -  w_2(p) \cdot p^5 \cdot  a_2\left( \frac{n}{p} \right) \pmod2.
		\end{equation}
		Therefore, if $p \nmid n,$ then $a_2\left( \frac{n}{p} \right) =0$ and 
		\begin{equation}\label{eq1121}
			a_2\left( p^5n+ \frac{3(p^6-1)}{8} \right) \equiv 0 \pmod2.
		\end{equation}
		Replacing $n$ by $pn$ in \eqref{eq1120}, we obtain
		\begin{equation}\label{eq1122}
			a_2\left( p^6n+ \frac{3(p^6-1)}{8} \right) \equiv -  w_2(p) \cdot p^5 \cdot  a_2\left( n \right) \pmod2.
		\end{equation}
		By using mathematical induction in \eqref{eq1122}, we get
		\begin{equation}\label{eq1123}
			a_2\left( p^{6k}n+ \frac{3(p^{6k}-1)}{8} \right) \equiv  ( - w_2(p) \cdot p^5)^k \cdot  a_2\left( n \right) \pmod2.
		\end{equation}
		Replacing $n$ by $ p^5n+ \frac{3(p^6-1)}{8}$ in \eqref{eq1123} and using \eqref{eq1121}, it follows that if $ w_2(p) \not \equiv 0 \pmod2$ and $p \nmid n,$ we get 
		\begin{equation}\label{eq1124}
			a_2\left( p^{6k+5}n+ \frac{3(p^{6k+6}-1)}{8} \right) \equiv  0 \pmod2.
		\end{equation}
		From \eqref{eq1124} and \eqref{eq1106} we obtain that, if $ w_2(p) \not \equiv 0 \pmod2$ and $p \nmid n,$  then
		\begin{equation}\label{eq1125}
			B_{8,3}\left( p^{6k+5}n+ \frac{3(p^{6k+6}-1)}{8} \right) \equiv  0 \pmod2.
		\end{equation}
		Note that $\left( \frac{-6}{p} \right)_{L}  \left( \frac{n- \frac{3(p^2-1)}{8}}{p} \right)_{L} = \left( \frac{-6n-2 + \frac{(p^2-1)}{4}}{p} \right)_{L}. $
		From \eqref{eq1111} it follows that if $ w_2(p) \equiv  p^2 \cdot \left( \frac{-6n-2 + \frac{(p^2-1)}{4}}{p} \right)_{L} \pmod2 $ we get
		\begin{equation}\label{eq1126}
			a_2\left( p^2n+ \frac{3(p^2-1)}{8} \right) \equiv - p^5 \cdot a_2\left( \frac{n- \frac{3(p^2-1)}{8}}{p^2}\right) \pmod2.
		\end{equation}
		Observe that $ \left( \frac{-6n-2 + \frac{(p^2-1)}{4}}{p} \right)_{L}  \not \equiv 0 \pmod2,$ then $p \nmid (8n+3)$ and hence $ \left( \frac{n- \frac{3(p^2-1)}{8}}{p}\right) $ and $\left( \frac{n- \frac{3(p^2-1)}{8}}{p^2}\right) $ are not integers. Thus, for $n \geq 0,$ from \eqref{eq1126} 
		we have
		\begin{equation}\label{eq1127}
			a_2\left( p^2n+ \frac{3(p^2-1)}{8} \right) \equiv 0 \pmod2.
		\end{equation}
		Replacing $n$ by $p^2n+ \frac{3(p^2-1)}{8} $ in \eqref{eq1123} and using \eqref{eq1127}, we obtain that if  $w_2(p) \not \equiv 0 \pmod2, $ and $ w_2(p) \equiv  p^2 \cdot \left( \frac{-6n-2 + \frac{(p^2-1)}{4}}{p} \right)_{L} \pmod2,$ then
		
		\begin{equation}\label{eq1128}
			a_2\left( p^{6k+2}n+ \frac{3(p^{6k+2}-1)}{8} \right) \equiv 0 \pmod2.
		\end{equation}
		Combining \eqref{eq1106} and \eqref{eq1128} together, we get
		\begin{equation}\label{eq1129}
			B_{8,3}\left( p^{6k+2}n+ \frac{3(p^{6k+2}-1)}{8} \right) \equiv 0 \pmod2.
		\end{equation}
\end{proof}
	
	%%%%%%%%%%%%%%%%%%%%%%%%%%%%%%%%%%%%%%%%%%%%%%%%%%%%%%%%%%%%%%%%%%%%%%%%%%%
	%%%%%%%%%%%Newman Identity II
	%%%%%%%%%%%%%%%%%%%%%%%%%%%%%%%%%%%%%%%%%%%%%%%%%%%%%%%%%%%%%%%%%%%%%%%%%%%%%%%%
	%%%%%%%%%%%%%%%%%%%%%%%%%%%%%%%%%%%%%%%%%%%%%%%%%%%%%%%%%%%%%%%%%%%%%%%%%%%
	\section{Congruences for $B_{4,5}(n)$}
	%%%%%%%%%%%%%%%%%%%%%%%%%%%%%%%%%%%%%%%%%%%%%%%%%%%%%%%%%%%%%%%%%%%%%%%%%%%%%%%%

\begin{proof}[{\bf Proof of Theorem \ref{thmnewman12}}]
		Note that 
		\begin{equation}\label{eq1206}
			\sum_{n=0}^{\infty} B_{4,5}(n)q^n=  \frac{(q^4;q^4)_{\infty} (q^5;q^5)_{\infty}}{(q;q)^2_{\infty}} \equiv (q;q)^2_{\infty} (q^5;q^5)_{\infty} \pmod2.
		\end{equation} 
		Let $a_3(n)$ be defined by \eqref{eq1201}. Substituting $q=5, r=2, s=1$ in Lemma \ref{lem100}, we obtain
		\begin{equation}\label{eq1207}
			a_3\left( p^2n+ \frac{7(p^2-1)}{24}\right)= \gamma_{3}(n) a_3(n)- p \cdot a_3\left( \frac{n- \frac{7(p^2-1)}{24}}{p^2}\right),
		\end{equation}
		where 
		\begin{equation}\label{eq1208}
			\gamma_{3}(n)= p \cdot c - \left( \frac{-10}{p} \right)_{L}  \left( \frac{n- \frac{7(p^2-1)}{24}}{p} \right)_{L}.
		\end{equation}
		If we put $n=0$ in \eqref{eq1207} and using the facts that $a_3(0)=1$ and $a_3\left( \frac{- \frac{7(p^2-1)}{24}}{p^2}\right) =0, $ we get
		\begin{equation}\label{eq1209}
			\gamma_{3}(0)= a_3\left( \frac{7(p^2-1)}{24}\right).
		\end{equation}
		Further taking $n=0$ in \eqref{eq1208} and using \eqref{eq1209}, we obtain
		\begin{equation}\label{eq1210}
			pc= w_3(p),
		\end{equation}
		where $ w_3(p) :=  a_3\left( \frac{7(p^2-1)}{24}\right) + \left( \frac{-10}{p}\right)_{L} \left( \frac{-7(p^2-1)}{24 p}\right)_{L}.$
		Combining \eqref{eq1210}, \eqref{eq1207} and \eqref{eq1208} together, we get
		\begin{equation}\label{eq1211}
			a_3\left( p^2n+ \frac{7(p^2-1)}{24} \right)= \left[ w_3(p) - \left( \frac{-10}{p} \right)_{L}  \left( \frac{n- \frac{7(p^2-1)}{24}}{p} \right)_{L} \right] a_3(n)- p \cdot a_3\left( \frac{n- \frac{7(p^2-1)}{24}}{p^2}\right).
		\end{equation}
		Replacing $n$ by $ pn+ \frac{7(p^2-1)}{24} $ in \eqref{eq1211}, we obtain
		\begin{equation}\label{eq1212}
			a_3\left( p^3n+ \frac{7(p^4-1)}{24} \right)=  w_3(p) a_3\left( pn+ \frac{7(p^2-1)}{24} \right)- p \cdot a_3\left( \frac{n}{p}\right).
		\end{equation}
		If $ w_3(p) \equiv 0 \pmod2,$ then 
		\begin{equation}\label{eq1213}
			a_3\left( p^3n+ \frac{7(p^4-1)}{24} \right) \equiv - p \cdot a_3\left( \frac{n}{p}\right) \pmod2.
		\end{equation}
		Replacing $n$ by $pn$ we have,
		\begin{equation}\label{eq1214}
			a_3\left( p^4n+ \frac{7(p^4-1)}{24} \right) \equiv - p \cdot a_3\left(n \right) \pmod2.
		\end{equation}
		Using mathematical induction in \eqref{eq1214}, we obtain for $n,k \geq 0,$
		\begin{equation}\label{eq1215}
			a_3\left( p^{4k}n+ \frac{7(p^{4k}-1)}{24} \right) \equiv (-p)^k \cdot a_3\left(n \right) \pmod2.
		\end{equation}
		Congruence \eqref{eq1213} implies that If $ w_3(p) \equiv 0 \pmod2$ and $ p \nmid n,$ then
		\begin{equation}\label{eq1216}
			a_3\left( p^3n+ \frac{7(p^4-1)}{24} \right) \equiv 0 \pmod2.
		\end{equation}
		Replacing $n$ by $ p^3n+ \frac{7(p^4-1)}{24}$ in \eqref{eq1215} and using \eqref{eq1216}, we get that if $ w_2(p) \equiv 0 \pmod2$ and $ p \nmid n,$ then
		\begin{equation}\label{eq1217}
			a_3\left( p^{4k+3}n+ \frac{7(p^{4k+4}-1)}{24} \right) \equiv 0 \pmod2.
		\end{equation}
		From \eqref{eq1206} and \eqref{eq1217} we obtain that, if $ w_2(p) \equiv 0 \pmod2$ and $ p \nmid n,$ then
		\begin{equation}\label{eq1217a}
			B_{4,5}\left( p^{4k+3}n+ \frac{7(p^{4k+4}-1)}{24} \right) \equiv 0 \pmod2.
		\end{equation}
		Replacing $n$ by  $ p^2n+ \frac{7p(p^2-1)}{24}$ in \eqref{eq1212}, we get
		\begin{equation}\label{eq1218}
			a_3\left( p^5n+ \frac{7(p^6-1)}{24} \right)=  w_3(p) a_3\left( p^3n+ \frac{7(p^4-1)}{24} \right)- p \cdot a_3\left( pn+ \frac{7(p^2-1)}{24} \right).
		\end{equation}
		Combining \eqref{eq1218} and \eqref{eq1212} together, we get
		\begin{equation}\label{eq1219}
			a_3\left( p^5n+ \frac{7(p^6-1)}{24} \right)=  \left( w_3^2(p) -p  \right)  a_3\left( pn+ \frac{7(p^2-1)}{24} \right)-  w_3(p) \cdot p \cdot  a_3\left( \frac{n}{p} \right).
		\end{equation}
		Note that if $ w_3(p) \not \equiv 0 \pmod2,$ then  $ w_3(p) \equiv 1 \pmod2$ and hence $ \left( w_3^2(p) -p  \right) \equiv 0 \pmod 2.$ Thus, if $ w_3(p) \not \equiv 0 \pmod2,$ then \eqref{eq1219} will be
		
		\begin{equation}\label{eq1220}
			a_3\left( p^5n+ \frac{7(p^6-1)}{24} \right) \equiv -  w_3(p) \cdot p \cdot  a_3\left( \frac{n}{p} \right) \pmod2.
		\end{equation}
		Therefore, if $p \nmid n,$ then $a_3\left( \frac{n}{p} \right) =0$ and 
		\begin{equation}\label{eq1221}
			a_3\left( p^5n+ \frac{7(p^6-1)}{24} \right) \equiv 0 \pmod2.
		\end{equation}
		Replacing $n$ by $pn$ in \eqref{eq1220}, we obtain
		\begin{equation}\label{eq1222}
			a_3\left( p^6n+ \frac{7(p^6-1)}{24} \right) \equiv -  w_3(p) \cdot p \cdot  a_3\left( n \right) \pmod2.
		\end{equation}
		By using mathematical induction in \eqref{eq1222}, we get
		\begin{equation}\label{eq1223}
			a_3\left( p^{6k}n+ \frac{7(p^{6k}-1)}{24} \right) \equiv  ( - w_3(p) \cdot p)^k \cdot  a_3\left( n \right) \pmod2.
		\end{equation}
		Replacing $n$ by $ p^5n+ \frac{7(p^6-1)}{24}$ in \eqref{eq1223} and using \eqref{eq1221}, it follows that if $ w_3(p) \not \equiv 0 \pmod2$ and $p \nmid n,$ we get 
		\begin{equation}\label{eq1224}
			a_3\left( p^{6k+5}n+ \frac{7(p^{6k+6}-1)}{24} \right) \equiv  0 \pmod2.
		\end{equation}
		From \eqref{eq1224} and \eqref{eq1206} we obtain that, if $ w_3(p) \not \equiv 0 \pmod2$ and $p \nmid n,$  then
		\begin{equation}\label{eq1225}
			B_{4,5}\left( p^{6k+5}n+ \frac{7(p^{6k+6}-1)}{24} \right) \equiv  0 \pmod2.
		\end{equation}
		Note that $\left( \frac{-10}{p} \right)_{L}  \left( \frac{n- \frac{7(p^2-1)}{24}}{p} \right)_{L} = \left( \frac{-10n-2 + \frac{11(p^2-1)}{12}}{p} \right)_{L}. $
		From \eqref{eq1211} it follows that if $ w_3(p) \equiv \left( \frac{-10n-2 + \frac{11(p^2-1)}{12}}{p} \right)_{L} \pmod2 $ we get
		\begin{equation}\label{eq1226}
			a_3\left( p^2n+ \frac{7(p^2-1)}{24} \right) \equiv - p \cdot a_3\left( \frac{n- \frac{7(p^2-1)}{24}}{p^2}\right) \pmod2.
		\end{equation}
		Observe that $ \left( \frac{-10n-2 + \frac{11(p^2-1)}{12}}{p} \right)_{L}  \not \equiv 0 \pmod2,$ then $p \nmid (24n+7)$ and hence $ \left( \frac{n- \frac{7(p^2-1)}{24}}{p}\right) $ and $\left( \frac{n- \frac{7(p^2-1)}{24}}{p^2}\right) $ are not integers. Thus, for $n \geq 0,$ from \eqref{eq1226} 
		we have
		\begin{equation}\label{eq1227}
			a_3\left( p^2n+ \frac{7(p^2-1)}{24} \right) \equiv 0 \pmod2.
		\end{equation}
		Replacing $n$ by $p^2n+ \frac{7(p^2-1)}{24} $ in \eqref{eq1223} and using \eqref{eq1227}, we obtain that if  $w_3(p) \not \equiv 0 \pmod2, $ and $ w_3(p) \equiv \left( \frac{-10n-2 + \frac{11(p^2-1)}{12}}{p} \right)_{L} \pmod2,$ then
		\begin{equation}\label{eq1228}
			a_3\left( p^{6k+2}n+ \frac{7(p^{6k+2}-1)}{24} \right) \equiv 0 \pmod2.
		\end{equation}
		Combining \eqref{eq1206} and \eqref{eq1228} together, we get
		\begin{equation}\label{eq1229}
			B_{4,5}\left( p^{6k+2}n+ \frac{7(p^{6k+2}-1)}{24} \right) \equiv 0 \pmod2.
		\end{equation}
	\end{proof}
	
	\noindent{\bf Data availability statement:} There is no data associated to our manuscript.
	%	
	%	\noindent{\bf Conflict of interest:} There is no conflict of interest between the two authors.
	
	%	\noindent{\bf Acknowledgement.}

\end{document}